\let\reftagform@=\tagform@
\def\tagform@#1{\maketag@@@{(\ignorespaces\textcolor{blue}{#1}\unskip\@@italiccorr)}}
\renewcommand{\eqref}[1]{\textup{\reftagform@{\ref{#1}}}}
\newtheorem{theorem}{Theorem}
\theoremstyle{plain}
\newtheorem{corollary}{Corollary}
\newtheorem{definition}{Definition}
\newtheorem{example}{Example}
\newtheorem{remark}{Remark}
\numberwithin{equation}{section}
  \def\etal{et al.\,}
\begin{document}
\title[Popoviciu's type inequalities]{Popoviciu's type inequalities for $h$-${\rm{MN}}$-convex functions}
\author[M.W. Alomari]{M.W. Alomari}

\address{Department of Mathematics, Faculty of Science and
	Information Technology, Irbid National University, 2600 Irbid
	21110, Jordan.} \email{mwomath@gmail.com}
\date{\today}
\subjclass[2000]{26D15}

\keywords{Popoviciu's inequality,  $h$-convex function, Means}

\begin{abstract}
	In this work, Popoviciu type inequalities for
	$h$-${\rm{MN}}$-convex functions are proved, where M and N are specific mathematical means. Some direct examples
	are pointed out.
\end{abstract}
\maketitle
\section{Introduction}

We recall that, a function ${\rm{M}}:(0,\infty) \to (0,\infty)$ is
called a Mean function if
\begin{enumerate}
	\item  {\rm{Symmetry:}} ${\rm{M}}\left(x,y\right)={\rm{M}}\left(y,x\right)$.
	
	\item {\rm{Reflexivity:}} ${\rm{M}}\left(x,x\right)=x$.
	
	\item {\rm{Monotonicity:}} $\min\{x,y\} \le {\rm{M}}\left(x,y\right) \le \max\{x,y\}$.
	
	\item {\rm{Homogeneity:}} $ {\rm{M}}\left(\lambda x,\lambda y\right)=\lambda
	{\rm{M}}\left(x,y\right)$, for any positive scalar $\lambda$.
\end{enumerate}

The most famous and old known mathematical means are listed as
follows:
\begin{enumerate}
	\item The arithmetic mean :
	$$A := A\left( {\alpha ,\beta } \right) = \frac{{\alpha  + \beta
	}}{2},\,\,\,\,\,\alpha ,\beta \in  \mathbb{R}_+.$$
	
	\item The geometric mean :
	$$G: = G\left( {\alpha ,\beta } \right) = \sqrt {\alpha \beta },\,\,\,\,\,\alpha ,\beta \in \mathbb{R}_+$$
	
	\item The harmonic mean :
	$$H: = H\left(
	{\alpha ,\beta } \right) = \frac{2}{{\frac{1}{\alpha } +
			\frac{1}{\beta }}},\,\,\,\,\,\alpha ,\beta \in \mathbb{R}_+
	- \left\{ 0 \right\}.$$
\end{enumerate}
In particular, we have the famous inequality $ H \le G \le A$.

In 2007, Anderson \etal in \cite{AVV} developed a systematic study
to the classical theory of continuous and midconvex functions, by
replacing a given mean instead of the arithmetic mean.
\begin{definition}
	\label{def4}Let $f : I \to \left(0,\infty\right)$ be a
	continuous function where $I \subseteq (0,\infty)$. Let ${\rm{M}}$
	and ${\rm{N}}$ be any two Mean functions. We say $f$ is
	${\rm{{\rm{MN}}}}$-convex (concave) if
	\begin{align}
	f \left({\rm{M}}\left( x, y\right)\right) \le  (\ge) \,
	{\rm{N}}\left(f (x), f (y)\right), \label{eq1.5}
	\end{align}
	for  all $x,y \in I$ and $t\in [0,1]$.
\end{definition}
In fact, the authors in \cite{AVV} discussed the midconvexity of
positive continuous real functions according to some Means. Hence,
the usual midconvexity is a special case when both mean values are
arithmetic means. Also, they studied the dependence of
${\rm{MN}}$-convexity on ${\rm{M}}$ and ${\rm{N}}$ and give
sufficient conditions for ${\rm{MN}}$-convexity of functions
defined by Maclaurin series.   For other works regarding
${\rm{MN}}$-convexity see \cite{N} and \cite{NP}.

The  class of $h$-convex functions, which generalizes convex,
$s$-convex (denoted by $K_s^2$, \cite{B1}--\cite{B3}, \cite{HL}),
Godunova-Levin functions (denoted by $Q(I)$, \cite{GL}) and
$P$-functions (denoted by $P(I)$, \cite{PR}), was introduced by
Varo\v{s}anec in \cite{V}. Namely, the $h$-convex function is
defined as a non-negative function $f : I \to \mathbb{R}$ which
satisfies
\begin{align*}
f\left( {t\alpha +\left(1-t\right)\beta} \right)\le
h\left(t\right) f\left( {\alpha} \right)+ h\left(1-t\right)
f\left( {\beta} \right),
\end{align*}
where $h$ is a non-negative function, $t\in ¸ (0, 1)\subseteq J$
and $x,y \in I $, where $I$ and $J$ are real intervals such that
$(0,1) \subseteq J $. Accordingly, some properties of $h$-convex
functions were discussed in the same work of Varo\v{s}anec.\\

Let $h: J\to \left(0,\infty\right)$ be a non-negative
function. Define the
function ${\rm{M}}:\left[0,1\right]\to \left[a,b\right]$ given by
${\rm{{\rm{M}}}}\left(t\right)={\rm{{\rm{M}}}}\left( {t;a,b}
\right)$; where by ${\rm{{\rm{M}}}}\left( {t;a,b} \right)$ we mean
one of the following functions:
\begin{enumerate}
	\item $A_h\left( {a,b} \right):=h\left( {1 - t} \right)a +  h\left( {t} \right)b
	;\qquad \text{The generalized Arithmetic
		Mean}$.\\
	
	\item $G_h\left( {a,b}
	\right)=a^{h\left( {1-t} \right)} b^{h\left( {t} \right)};\qquad\qquad\,\,\,\,\,\,\, \text{The generalized Geometric Mean}$.\\
	
	\item $H_h\left( {a,b} \right):=\frac{ab}{h\left( {t} \right) a + h\left( {1 - t}
		\right)b} = \frac{1}{A_h\left( {\frac{1}{a},\frac{1}{b}} \right)};
	\qquad\,\,\,\,\, \text{The generalized
		Harmonic Mean}$.\\
\end{enumerate}
Note that ${\rm{{\rm{M}}}}\left( {h\left( {0} \right);a,b} \right)=a$ and
${\rm{M}}\left( {h\left( {1} \right);a,b} \right)=b$. Clearly, for $h(t)=t$ with $t=\frac{1}{2}$,
the means $A_{\frac{1}{2}}$, $G_{\frac{1}{2}}$ and
$H_{\frac{1}{2}}$, respectively; represents the midpoint of the
$A_{t}$, $G_{t}$ and $H_{t}$, respectively; which was discussed in
\cite{AVV} in viewing of Definition \ref{def4}.

For $h(t)=t$, we note that the above means are related with celebrated
AM-GM-HM inequality
\begin{align*}
H_t\left( {a,b} \right)\le G_t\left( {a,b} \right) \le  A_t\left(
{a,b} \right),\qquad \forall \,\, t\in [0,1].
\end{align*}
Indeed, one can easily prove  more general form of the above inequality; that is if $h$ is positive increasing on $[0,1]$ then the generalized AM-GM-HM inequality
is given by
\begin{align}
\label{AM-GM-HM}H_h\left( {a,b} \right)\le G_h\left( {a,b} \right) \le  A_h\left(
{a,b} \right),\qquad \forall \,\, t\in [0,1]\,\,\,\,\text{and}\,\,\,\,  a,b>0.
\end{align}

The Definition \ref{def4} can be extended according to the defined
mean ${\rm{{\rm{M}}}}\left( {t;a,b} \right)$, as follows:
Let $f : I \to \left(0,\infty\right)$
be any function. Let ${\rm{M}}$ and ${\rm{N}}$ be any two Mean
functions. We say $f$ is ${\rm{{\rm{MN}}}}$-convex (concave) if
\begin{align*}
f \left({\rm{M}}\left(t;x, y\right)\right) \le  (\ge) \,
{\rm{N}}\left(t;f (x), f (y)\right),
\end{align*}
for  all $x,y \in I$ and $t\in [0,1]$.

More generally, the authors of this paper have introduced the class of ${\rm{M_tN_h}}$-convex
functions by generalizing the concept of ${\rm{M_tN_t}}$-convexity
and combining it with $h$-convexity \cite{A}.
\begin{definition}\cite{A}
	\label{def5} Let $h: J\to \left(0,\infty\right)$ be a non-negative
	function. Let $f : I\to \left(0,\infty\right)$ be any function.
	Let ${\rm{M}}:\left[0,1\right]\to \left[a,b\right]$ and
	${\rm{N}}:\left(0,\infty\right)\to \left(0,\infty\right)$ be any
	two Mean functions. We say $f$ is $h$-${\rm{{\rm{MN}}}}$-convex
	(-concave) or that $f$ belongs to the class
	$\overline{\mathcal{MN}}\left(h,I\right)$
	($\underline{\mathcal{MN}}\left(h,I\right)$) if
	\begin{align}
	f \left({\rm{M}}\left(t;x, y\right)\right) \le  (\ge) \,
	{\rm{N}}\left(h(t);f (x), f (y)\right),\label{eq1.3}
	\end{align}
	for  all $x,y \in I$ and $t\in [0,1]$.
\end{definition}
Clearly, if ${\rm{M}}\left(t;x, y\right)=A_t\left( {x,y}
\right)={\rm{N}}\left(t;x, y\right)$, then Definition \ref{def5}
reduces to the original concept of $h$-convexity. Also, if we
assume $f$ is continuous,  $h(t)=t$ and $t=\frac{1}{2}$ in
\eqref{eq2.4}, then the Definition \ref{def5} reduces to the
Definition \ref{def4}.

The cases of $h$-${\rm{{\rm{MN}}}}$-convexity are given with
respect to a certain mean, as follow:
\begin{enumerate}
	\item $f$ is  ${\rm{A_tG_h}}$-convex iff
	\begin{align}
	f\left( {t\alpha  + \left( {1 -
			t} \right)\beta } \right) \le \left[ {f\left( \alpha  \right)}
	\right]^{h\left( t \right)} \left[ {f\left( \beta \right)}
	\right]^{h\left(1- t \right)}, \qquad  0\le t\le 1,\label{eqhAG}
	\end{align}

	\item $f$ is  ${\rm{A_tH_h}}$-convex iff
	\begin{align}
	f\left( {t\alpha  + \left( {1 -
			t} \right)\beta } \right) \le\frac{{f\left( \alpha \right)f\left(
			\beta  \right)}}{{h\left( 1-t \right)f\left( \alpha  \right) +
			h\left( { t} \right)f\left( \beta  \right)}}, \qquad  0\le t\le
	1.\label{eqhAH}
	\end{align}
	
	\item $f$ is  ${\rm{G_tA_h}}$-convex iff
	\begin{align}
	f\left( {\alpha ^t \beta ^{1 - t} } \right) \le h\left( {t}
	\right)f\left( \alpha \right) + h\left( {1 - t} \right)f\left(
	\beta  \right), \qquad 0\le t\le 1.\label{eqhGA}
	\end{align}
	
	\item $f$ is ${\rm{G_tG_h}}$-convex iff
	\begin{align}
	f\left( {\alpha ^t \beta ^{1 - t} } \right) \le \left[ {f\left(
		\alpha  \right)} \right]^{h\left( {t} \right)} \left[ {f\left(
		\beta  \right)} \right]^{h\left( {1 - t} \right)}, \qquad  0\le
	t\le 1.\label{eqhGG}
	\end{align}
	
	\item $f$ is ${\rm{G_tH_h}}$-convex iff
	\begin{align}
	f\left( {\alpha ^t \beta ^{1 - t} } \right) \le \frac{{f\left(
			\alpha  \right)f\left( \beta  \right)}}{{h\left( {1- t}
			\right)f\left( \alpha  \right) + h\left( { t} \right)f\left( \beta
			\right)}}, \qquad  0\le t\le 1.\label{eqhGH}
	\end{align}
	
	\item $f$ is  ${\rm{H_tA_h}}$-convex iff
	\begin{align}
	f\left( {\frac{{\alpha \beta }}{{t\alpha  + \left( {1 - t}
				\right)\beta }}} \right) \le h\left( {1-t} \right)f\left( \alpha
	\right) + h\left( { t} \right)f\left( \beta  \right), \qquad 0\le
	t\le 1.\label{eqhHA}
	\end{align}
	\item $f$ is  ${\rm{H_tG_h}}$-convex iff
	\begin{align}
	f\left( {\frac{{\alpha \beta }}{{t\alpha  + \left( {1 - t}
				\right)\beta }}} \right) \le \left[ {f\left( \alpha  \right)}
	\right]^{h\left( { 1-t} \right)} \left[ {f\left( \beta  \right)}
	\right]^{h\left( {  t} \right)}, \qquad 0\le t\le 1.\label{eqhHG}
	\end{align}
	
	\item $f$ is ${\rm{H_tH_h}}$-convex iff
	\begin{align}
	f\left( {\frac{{\alpha \beta }}{{t\alpha  + \left( {1 - t}
				\right)\beta }}} \right) \le \frac{{f\left( \alpha  \right)f\left(
			\beta  \right)}}{{h\left( {t} \right)f\left( \alpha  \right) +
			h\left( {1 - t} \right)f\left( \beta  \right)}}, \qquad  0\le t\le
	1.\label{eqhHH}
	\end{align}
\end{enumerate}

\begin{remark}
	In all previous cases,  $h(t)$ and $h(1-t)$ are not equal to zero
	at the same time. Therefore, if
	$h(0)=0$ and $h(1)=1$, then the Mean function ${\rm{N}}$ satisfying
	the conditions ${\rm{N}}\left( {h\left( 0 \right),f\left( x \right),f\left( y \right)}
	\right) = f\left( x \right)$ and $ {\rm{N}}\left( {h\left( 1
		\right),f\left( x \right),f\left( y \right)} \right) = f\left(y
	\right) $.
\end{remark}
\begin{remark}
	According to the Definition \ref{def5},  we may extend the classes
	$Q(I), P(I)$
	and $K_s^2$ by replacing the  arithmetic mean by another given
	one.
	Let ${\rm{M}}:\left[0,1\right]\to \left[a,b\right]$
	and ${\rm{N}}:\left(0,\infty\right)\to \left(0,\infty\right)$ be
	any two Mean functions.
	\begin{enumerate}
		\item Let $s\in (0,1]$, a function $f : I\to
		\left(0,\infty\right)$ is ${\rm{M_tN_{t^s}}}$-convex function or
		that $f$ belongs to the class
		$K^2_s\left(I;{\rm{M_t}},{\rm{N_{t^s}}}\right)$ if for all $x,y \in I$
		and $t\in [0,1]$ we have
		\begin{align}
		f \left({\rm{M}}\left(t;x, y\right)\right) \le {\rm{N}}\left(t^s;f
		(x), f (y)\right).\label{eq1.12}
		\end{align}
		
		\item  We say that $f : I \to \left(0,\infty\right)$ is an
		extended Godunova-Levin function or that $f$ belongs to the class
		$Q\left(I;{\rm{M_t}},{\rm{N_{1/t}}}\right)$ if for all $x,y \in I$ and
		$t\in (0,1)$ we have
		\begin{align}
		f \left({\rm{M}}\left(t;x, y\right)\right) \le
		{\rm{N}}\left(\frac{1}{t};f (x), f (y)\right).\label{eq1.13}
		\end{align}
		
		\item We say that  $f : I\to \left(0,\infty\right)$ is
		$P$-${\rm{M_tN_{t=1}}}$-function or that $f$ belongs to the class
		$P\left(I;{\rm{M_t}},{\rm{N_1}}\right)$ if for all $x,y \in I$ and
		$t\in [0,1]$ we have
		\begin{align}
		f \left({\rm{M}}\left(t;x, y\right)\right) \le {\rm{N}}\left(1;f
		(x), f (y)\right).\label{eq1.14}
		\end{align}
		In \eqref{eq1.12}--\eqref{eq1.14}, setting ${\rm{M}}\left(t;x,
		y\right)= {\rm{A_t}}\left(x, y\right)={\rm{N}}\left(t;x,
		y\right)$, we then refer to the original definitions of these
		class of convexities.
	\end{enumerate}
\end{remark}

\begin{remark}
	\label{remark2}Let $h$ be a non-negative function such that
	$h\left(t\right) \ge t$ for $t\in \left(0,1\right)$. For instance
	$h_r\left(t\right) = t^r$, $t\in \left(0,1\right)$ has that
	property. In particular, for $r\le 1$, if $f$ is a non-negative
	${\rm{M_tN_t}}$-convex function on $I$, then for $x,y\in I$, $t
	\in (0,1)$ we have
	\begin{align*}
	f \left({\rm{M}}\left(t;x, y\right)\right) \le {\rm{N}}\left(t;f
	(x), f (y)\right) \le {\rm{N}}\left(t^r;f (x), f (y)\right)=
	{\rm{N}}\left(h\left(t\right);f (x), f (y)\right),
	\end{align*}
	for all $r\le 1$ and $t\in \left(0,1\right)$. So that $f$ is
	${\rm{M_tN_h}}$-convex. Similarly, if the function satisfies
	the property $h\left(t\right) \le t$ for $t \in \left(0,1\right)$,
	then $f$ is a non-negative  ${\rm{M_tN_h}}$-concave. In
	particular, for $r\ge 1$, the function $h_r(t)$ has that property
	for $t\in \left(0,1\right)$.
	So that if $f$ is a non-negative ${\rm{M_tN_t}}$-concave
	function on $I$, then for $x,y\in I$, $t \in (0,1)$ we have
	\begin{align*}
	f \left({\rm{M}}\left(t;x, y\right)\right) \ge {\rm{N}}\left(t;f
	(x), f (y)\right) \ge {\rm{N}}\left(t^r;f (x), f
	(y)\right)={\rm{N}}\left(h\left(t\right);f (x), f (y)\right),
	\end{align*}
	for all $r\ge 1$ and $t\in \left(0,1\right)$, which means that $f$
	is ${\rm{M_tN_h}}$-concave.
\end{remark}

As known, it is not easy to determine whether a given function is
convex or not. Because of that, Jensen in \cite{J} proved his
famous characterization of convex functions. Simply, for a
continuous functions $f$ defined on a real interval $I$, $f$ is
convex if and only if
\begin{align*}
f\left( {\frac{{x + y}}{2}} \right) \le \frac{{f\left( x \right) +
		f\left( y \right)}}{2},
\end{align*}
for all $x,y\in I$.

In 1965, another characterization was presented by Popoviciu
\cite{P}, where he proved that the following theorem.
\begin{theorem}
	Let $f:I\to \mathbb{R}$ be continuous. Then, $f$ is convex if and
	only if
	\begin{align}
	\frac{2}{3}\left[{ f\left( {\frac{{x + z}}{2}} \right)+f\left(
		{\frac{{y + z}}{2}} \right)+f\left( {\frac{{x + y}}{2}}
		\right)}\right] \le f\left( {\frac{{x + y + z}}{3}} \right)
	+\frac{f\left( x \right)+f\left( y \right)+f\left( z \right)}{3},
	\label{eq1.15}
	\end{align}
	for all $x,y,z\in I$, and the equality occurred by $f(x)=x$, $x\in
	I$.
\end{theorem}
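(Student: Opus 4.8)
The plan is to establish the equivalence in two steps, after first recording a convenient reformulation. Clearing the factors $\tfrac23$ and $\tfrac13$, inequality \eqref{eq1.15} is equivalent to the symmetric form
\begin{equation*}
f(x)+f(y)+f(z)+3f(m)\;\ge\;2\left[f\!\left(\tfrac{x+y}{2}\right)+f\!\left(\tfrac{y+z}{2}\right)+f\!\left(\tfrac{z+x}{2}\right)\right],\qquad m:=\tfrac{x+y+z}{3},
\end{equation*}
and I would work with this form throughout, proving $\text{convex}\Rightarrow(\ast)$ and $(\ast)\Rightarrow\text{convex}$ separately.

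For the forward implication I would use the full symmetry of both sides in $x,y,z$ to assume $x\le y\le z$, so that $x\le m\le z$. The point is that $(\ast)$ is exactly the statement that, for the convex function $f$, the six-element multiset $\{x,y,z,m,m,m\}$ dominates $\{\tfrac{x+y}{2},\tfrac{x+y}{2},\tfrac{y+z}{2},\tfrac{y+z}{2},\tfrac{z+x}{2},\tfrac{z+x}{2}\}$ in the sense of majorization; both have total $2(x+y+z)$. By Karamata's inequality (equivalently, by repeated use of the fact that spreading a pair of equal-sum points increases the sum of a convex function) it then suffices to verify the majorization, i.e.\ that the decreasingly ordered partial sums of the first multiset dominate those of the second. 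I would split into the cases $y\le m$ and $y\ge m$, which are interchanged by replacing $f(\cdot)$ with the still-convex $f(-\cdot)$ and $(x,y,z)$ with $(-z,-y,-x)$; hence it is enough to treat $y\le m$. In that case the averages order as $\tfrac{y+z}{2}\ge\tfrac{z+x}{2}\ge\tfrac{x+y}{2}$, and a direct comparison shows every partial-sum inequality reduces, after clearing denominators, to one of $z\ge y$, $y\ge x$, or $m\ge y$, all guaranteed by the ordering and the case hypothesis. This partial-sum verification is the main obstacle, and it is precisely where the case assumption $m\ge y$ is consumed.

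For the converse, since $f$ is continuous it suffices to show $f$ lies below each of its chords. Specialising $(\ast)$ to $x=y=a$, $z=b$ collapses it to
\begin{equation*}
4\,f\!\left(\tfrac{a+b}{2}\right)\;\le\;3\,f\!\left(\tfrac{2a+b}{3}\right)+f(b),\qquad a,b\in I.
\end{equation*}
This relation holds with equality for every affine function, so it is unchanged when $f$ is replaced by $\phi:=f-L$, where $L$ is the affine function agreeing with $f$ at two fixed points $p<q$; thus $\phi$ satisfies the same relation and $\phi(p)=\phi(q)=0$. I would then argue by a maximum principle on $[p,q]$: if $M:=\max_{[p,q]}\phi>0$ and $c^{\ast}$ is the largest point with $\phi(c^{\ast})=M$, then $c^{\ast}$ is interior (since $\phi$ vanishes at the endpoints), and applying the relation with $a=c^{\ast}-t$, $b=c^{\ast}+t$ for small $t>0$ gives $4M\le 3\phi(c^{\ast}-\tfrac t3)+\phi(c^{\ast}+t)\le 4M$, forcing $\phi(c^{\ast}+t)=M$ and contradicting the maximality of $c^{\ast}$. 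Hence $\phi\le 0$, i.e.\ $f\le L$ on $[p,q]$; as $p,q$ are arbitrary this yields convexity.

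Two points need care. In the forward direction one must confirm that the reflection genuinely converts the case $y\ge m$ into $y\le m$ and that all partial-sum comparisons hold in the remaining case; in the converse one must check that $c^{\ast}\pm t$ and $c^{\ast}-\tfrac t3$ stay in $[p,q]$, which is automatic once $c^{\ast}$ is interior. Finally, the equality case $f(x)=x$ is immediate, since $(\ast)$ holds with equality for every affine $f$, matching the claimed statement.
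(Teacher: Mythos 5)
Your proof is correct, but it follows a genuinely different route from the one this paper is built on. The paper never actually proves this statement --- it quotes it from Popoviciu's 1965 article --- and the closest in-house argument is the proof of Theorem \ref{thm1}, which specializes to the ``if'' direction here upon taking $h(t)=t$ (Corollary \ref{cor1}): order $x\le y\le z$, reduce to the case $y\le m$ with $m=\frac{x+y+z}{3}$, write $\frac{x+z}{2}=sm+(1-s)z$ and $\frac{y+z}{2}=tm+(1-t)z$, note that summing forces $(x+y-2z)\left(s+t-\frac{3}{2}\right)=0$, hence $s+t=\frac{3}{2}$ unless $x=y=z$, then apply the convexity inequality to each of the three midpoints and add. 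Your forward direction replaces this by a majorization argument: the multiset $\{x,y,z,m,m,m\}$ majorizes the doubled midpoints, Karamata's inequality finishes, and your two supporting checks are sound --- the partial sums do reduce to $z\ge y$, $y\ge x$, or $m\ge y$ (the third one via $x+z\ge 2y$), and the reflection $u\mapsto -u$ genuinely swaps the cases $y\le m$ and $y\ge m$. What each approach buys: yours is conceptually cleaner and extends at once to weighted and $n$-variable Popoviciu statements, but it invokes Karamata as an external lemma; the paper's convex-combination computation is self-contained and, crucially for this paper, is exactly the argument that survives when the arithmetic means are replaced by geometric or harmonic means and $h$-weights (Theorems \ref{thm3}, \ref{thm5}, \ref{thm6}, \ref{thm8}, \ref{thm9}), a setting where majorization theory is unavailable. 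Finally, the paper nowhere addresses the ``only if'' half of the equivalence; your maximum-principle argument for it --- subtract the affine interpolant $L$ at $p<q$, take the largest maximizer $c^{*}$ of $\phi=f-L$, and use $4\phi(c^{*})\le 3\phi\left(c^{*}-\frac{t}{3}\right)+\phi(c^{*}+t)$ to push the maximum rightward, contradicting maximality of $c^{*}$ --- is correct and fills a gap the paper leaves open, with continuity entering exactly where it must: in the existence and closedness of the set of maximizers.
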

The corresponding version of  Popoviciu inequality for
${\rm{G_tG_t}}$-convex (concave) function was presented by
\cite{N}, where he  proved that for   all $x,y,z\in I$ the
inequality
\begin{align}
f^2\left( {\sqrt {xz}} \right)f^2\left( {\sqrt {yz}}
\right)f^2\left( {\sqrt {xy}} \right) \le (\ge)  f^3\left(
{\sqrt[3]{xyz} } \right) f\left( x \right)f\left( y \right)f\left(
z \right),
\end{align}
holds.

One of the most applicable benefits of Popoviciu's inequality is
to maximize and/or minimize a given function (or certain real
quantities) with out using derivatives, so that such type of
inequalities plays an important role in Optimizations and
Approximations. Another serious usefulness is to generalize some
old famous inequalities, e.g., the Popoviciu's inequality can be
considered as an elegant generalization of Hlawka's inequality
using convexity as a simple tool of geometry. For any real numbers
$x,y,z$, the Hlawka's inequality reads:
\begin{align}
\label{Hlawkaineq}\left| x \right| + \left| y \right| + \left| z
\right| + \left| {x + y + z} \right| \ge \left| {x + z} \right| +
\left| {z + y} \right| + \left| {x + y} \right|.
\end{align}
D. Smiley \& M.  Smiley \cite{W} (see also \cite{SS}, p. 756),
interpreted Hlawka's inequality geometrically by saying that:
``the total length over all sums of pairs from three vectors is
not greater than the perimeter of the quadrilateral defined by the
three vectors.'' For recent comprehensive history regarding
Hlawka's inequality see \cite{F}. It's convenient to note that, a
normed linear space for which inequality \eqref{Hlawkaineq} holds
for all $x$, $y$, $z$ is called a Hlawka space or quadrilateral
space, see \cite{TTH} and \cite{TTMT} (also \cite{SS}). For
instance, each inner product space is a Hlawka space, \cite{MPF}.

The extended version of Popoviciu's inequality to
several variables was not possible without the help of Hlawka's
inequality, as it inspired  the authors of \cite{BNP} to develop a
higher dimensional analogue of Popoviciu's inequality based on his
characterization. Interesting generalizations and counterparts of
Popoviciu inequality with some ramified consequences can be
found in \cite{G} and \cite{VS}.\\

Therefore, as Popoviciu's inequality  one of the most popular
generalization of Hlawka's inequality, and due to its important
usefulness, in this work we establish the corresponding Popoviciu
type inequalities according to a given mean used instead of the
arithmetic mean. Namely, for $h$-${\rm{AN}}$-convex functions
several inequalities of Popoviciu type are proved. In this way, we
extend Hlawka's inequality based on the geometric structure used
under an $h$-${\rm{AN}}$-convex mappings.

\section{Popoviciu type inequalities for $h$-${\rm{AN}}$-convex
	functions}
After focus consideration we find that, there is neither
nonnegative $\frac{1}{t}$-${\rm{M_tA_t}}$-concave nor
$\frac{1}{t}$-${\rm{M_tH_t}}$--convex functions, where
$M_t=A_t,G_t,H_t$. The same observation holds for
$h\left(t\right)=t^k$, $k\le -1$, $t\in (0,1)$.

To see how this holds, suppose on the contrary that there is a
nonnegative function $f$ which is
${\rm{M_tA_{1/t}}}$-concave on $I$. Thus, for Means
${\rm{M_t}}$ and ${\rm{A_t}}$, the reverse inequality of
\eqref{eq1.3} holds for all $x,y \in I$ and $t\in (0,1)$.
\begin{align*}
f\left( {M \left( {t;x,y} \right)} \right)
\ge \frac{1}{1-t} f\left( x \right) + \frac{1}{t} f\left( y
\right).
\end{align*}
Since $ M_t \left( {x,x} \right)=x$, so by setting $x=y$ we have
\begin{align*}
f\left( {x} \right) \ge \frac{1}{1-t} f\left( x \right) +
\frac{1}{t}f\left( x
\right) = \frac{1}{t\left(1-t\right)} f\left( x \right),
\end{align*}
which is equivalent to write $\left(t-t^2-1\right)f\left(x\right)
\ge 0$, $\forall t\in (0,1)$. But since $f$ is non-negative we
must have $t-t^2-1\ge0$,  $0<t<1$ which is impossible and thus we
got a contradiction. Hence, we must have $f\left(x\right) \le 0$.

In case when $f$ is nonnegative
${\rm{M_tH_{1/t}}}$--convex function, then
\begin{align*}
f\left( {M \left( {t;x,y} \right)} \right) \le   \frac{{t\left(
		1-t \right)f\left( x \right)f\left(y  \right)}}{{tf\left( x
		\right) +\left( 1-t \right)f\left( y  \right)}},
\end{align*}
setting $x=y$ we have
\begin{align*}
f\left( {x} \right)
\le   t\left(
1-t \right)f\left( x \right),
\end{align*}
and this is equivalent to write $\left(t\left( 1-t \right)
-1\right)f\left( x \right)\ge0$, since $f$ is nonnegative we must
have $t\left( 1-t \right) -1\ge0$ which impossible for $t\in
(0,1)$, which contradicts the nonnegativity assumption  of $f$.
Hence, $f\le0$.

\begin{remark}
	\label{cor}There is no nonnegative  ${\rm{M_tA_1}}$-concave nor
	${\rm{M_tH_{1}}}$-convex functions, where $M_t=A_t,G_t,H_t$. The
	proof is  simpler than that ones given above.
\end{remark}

According to the previous discussion, we need to extend the
classes $Q\left(I;{\rm{M_t}},{\rm{A_{1/t}}}\right)$,
$Q\left(I;{\rm{M_t}},{\rm{H_{1/t}}}\right)$,
$P\left(I;{\rm{M_t}},{\rm{A_1}}\right)$, and
$P\left(I;{\rm{M_t}},{\rm{H_1}}\right)$. Consequently, we say that
a function $f:I\to \mathbb{R}$
\begin{enumerate}
	\item is ${\rm{M_tA_{1/t}}}$-concave, if $-f \in
	Q\left(I;{\rm{M_t}},{\rm{A_{1/t}}}\right)$, i.e.,
	\begin{align*}
	f \left({\rm{M}}\left(t;x, y\right)\right)\ge \frac{1}{1-t}
	f\left( x \right) + \frac{1}{t} f\left( y
	\right),
	\end{align*}
	for all $x,y \in I$ and $t\in (0,1)$.\\

	\item is  ${\rm{M_tH_{1/t}}}$-convex, if $-f \in
	Q\left(I;{\rm{M_t}},{\rm{H_{1/t}}}\right)$, i.e.,
	\begin{align*}
	f \left({\rm{M}}\left(t;x, y\right)\right)\ge \frac{{t\left( 1-t
			\right)f\left( x \right)f\left(y  \right)}}{{tf\left( x \right)
			+\left( 1-t \right)f\left( y  \right)}},
	\end{align*}
	for all $x,y \in I$ and $t\in (0,1)$.\\
	
	\item is ${\rm{M_tA_1}}$-concave, if $-f \in
	P\left(I;{\rm{M_t}},{\rm{A_1}}\right)$, i.e.,
	\begin{align*}
	f \left({\rm{M}}\left(t;x, y\right)\right)\ge  f\left( x \right)
	+f\left( y
	\right)
	\end{align*}
	for all $x,y \in I$ and $t\in (0,1)$.\\
	
	\item is  ${\rm{M_tH_1}}$-concave, if $-f \in
	P\left(I;{\rm{M_t}},{\rm{H_1}}\right)$, i.e.,
	\begin{align*}
	f \left({\rm{M}}\left(t;x, y\right)\right)\ge\frac{{f\left( x
			\right)f\left( y \right)}}{{f\left( x \right) + f\left( y
			\right)}},
	\end{align*}
	for all $x,y \in I$ and $t\in (0,1)$.
\end{enumerate}

In the same way, there is no  ${\rm{M_tG_{1/t}}}$-concave
function satisfies $f\left(x\right)> 1$. To support this
assertion, assume there exists
${\rm{M_tG_{1/t}}}$-concave function, so that for Means
${\rm{M_t}}$ and ${\rm{G_t}}$, the reverse inequality of
\eqref{eq1.3} holds for all $x,y \in I$ and $t\in (0,1)$.
\begin{align*}
f\left( {M \left( {t;x,y} \right)} \right) \ge  \left[f\left( x
\right)\right]^{\frac{1}{1-t}}\left[ f\left( y
\right)\right]^ {\frac{1}{t}},
\end{align*}
since $ M_t \left( {x,x} \right)=x$, so by setting $x=y$ we have
\begin{align*}
f\left( {x} \right) \ge \left[f\left( x
\right)\right]^{\frac{1}{1-t}+ \frac{1}{t}},
\end{align*}
since $f\left(x\right)> 1$ and $t\in (0,1)$ then we must have
$\frac{1}{1-t}+ \frac{1}{t}\le1$ which is equivalent to write
$1\le t \left(1-t\right)$ for all $t\in (0,1)$ and this is
impossible, thus we have a contradiction. Hence, we must have $
0\le f\left(x\right) \le 1$.

\begin{remark}
	There is no $1$-${\rm{M_tG_t}}$-concave function satisfies
	$f\left(x\right)> 1$. The proof is  simpler than that ones given
	above.
\end{remark}

A function $h: I\to \mathbb{R}$ is said to be
\begin{enumerate}
	\item additive if $h\left( {s + t} \right) = h\left( s \right) +
	h\left( t \right)$,
	
	\item    subadditive if $h\left( {s + t} \right) \le h\left( s
	\right) + h\left( t \right)$,
	
	\item    superadditive if $h\left( {s + t} \right) \ge h\left( s
	\right) + h\left( t \right)$,
\end{enumerate}
for all $s,t \in I$. For example, let $h:I\to
\left(0,\infty\right)$ given by $h\left(x\right)=x^k$, $x>0$. Then
$h$ is
\begin{enumerate}
	\item additive if $k=1$.

	\item subadditive if $k \in \left(-\infty,-1\right]\cup
	\left[0,1\right)$.
	
	\item superadditive if $k \in \left(-1,0\right) \cup
	\left(1,\infty\right)$.
\end{enumerate}

We note here, in all next results  and for the classes
${\rm{M_tA_{1/t}}}$-concave,
${\rm{M_tG_{1/t}}}$-concave,
${\rm{M_tH_{1/t}}}$-convex ,
${\rm{M_tA_1}}$-concave, and   ${\rm{M_tH_1}}$-convex
functions,   $f$ is defined to be $f : I \to \mathbb{R}$,
$I\subseteq (0,\infty)$.

\subsection{The case when $f$ is $h$-${\rm{AA}}$-convex}

Now, we are ready to state our first main result.
\begin{theorem}
	\label{thm1} Let $h: I\to \left(0,\infty\right)$ be a non-negative
	super(sub)additive
	function. If $f : I \to \left(0,\infty\right)$ be an
	${\rm{A_tA_h}}$-convex (concave) function, then
	\begin{multline}
	f\left( {\frac{{x + z}}{2}} \right)+f\left( {\frac{{y + z}}{2}}
	\right)+f\left( {\frac{{x + y}}{2}} \right)
	\\
	\le\,(\ge)\, h\left( {3/2} \right) f\left( {\frac{{x + y + z}}{3}}
	\right) +h\left(1/2 \right)\left[ {f\left( x \right)+f\left( y
		\right)+f\left( z \right)} \right], \label{eq2.1}
	\end{multline}
	for all $x,y,z\in I$.
\end{theorem}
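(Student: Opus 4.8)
The plan is to reduce to the convex/superadditive case (the concave/subadditive statement is then the exact mirror image) and to carry out the classical Popoviciu ordering argument while bookkeeping the $h$-weights. Recall that $A_tA_h$-convexity is precisely the $h$-convex inequality $f(tu+(1-t)v)\le h(t)f(u)+h(1-t)f(v)$ for $t\in[0,1]$. Since both sides of \eqref{eq2.1} are symmetric in $x,y,z$, I may assume $x\le y\le z$. Writing $m=\frac{x+y+z}{3}$ and $a=\frac{x+y}{2}$, $b=\frac{y+z}{2}$, $c=\frac{x+z}{2}$, a one-line computation shows $a\le m\le b$ in all cases, while $c-m=\frac{x+z-2y}{6}$ changes sign exactly according to whether $y\le m$ or $y\ge m$. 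This yields the two cases I would analyze; the fully degenerate case $x=y=z$ is immediate, since setting $u=v$ in the definition forces $2h(\tfrac12)\ge 1$, whence $h(\tfrac32)+3h(\tfrac12)\ge 6h(\tfrac12)\ge 3$.

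First I would treat the case $y\le m$ (equivalently $x+z\ge 2y$), in which both $b$ and $c$ lie in $[m,z]$ while $a$ lies in $[x,y]$. The point $a$ is the exact midpoint $a=\tfrac12 x+\tfrac12 y$, so $h$-convexity gives $f(a)\le h(\tfrac12)f(x)+h(\tfrac12)f(y)$ with clean weights. For $b$ and $c$ I would instead write them as genuine convex combinations of the extreme vertex $z$ and the centroid $m$, namely $b=\beta z+(1-\beta)m$ and $c=\gamma z+(1-\gamma)m$ with $\beta=\frac{y+z-2x}{2(2z-x-y)}$ and $\gamma=\frac{x+z-2y}{2(2z-x-y)}$, both of which lie in $[0,1]$ in this case. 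The key algebraic identity driving the whole proof is that the two $z$-weights add up exactly, $\beta+\gamma=\tfrac12$, so that the two centroid-weights satisfy $(1-\beta)+(1-\gamma)=\tfrac32$.

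Then I would apply $h$-convexity to $b$ and $c$, giving $f(b)\le h(\beta)f(z)+h(1-\beta)f(m)$ and $f(c)\le h(\gamma)f(z)+h(1-\gamma)f(m)$, and add the three bounds. Using superadditivity together with $f\ge 0$, the two $z$-terms collapse via $h(\beta)+h(\gamma)\le h(\beta+\gamma)=h(\tfrac12)$ and the two centroid-terms via $h(1-\beta)+h(1-\gamma)\le h\bigl((1-\beta)+(1-\gamma)\bigr)=h(\tfrac32)$, which assembles exactly into $h(\tfrac32)f(m)+h(\tfrac12)[f(x)+f(y)+f(z)]$. The case $y\ge m$ is identical after the interchange $x\leftrightarrow z$ (now $a,c\in[x,m]$ split across $x$ and $m$, while $b$ is the exact midpoint of $[y,z]$), and the concave statement follows by reversing every $h$-convex inequality and replacing superadditivity by subadditivity, which flips both collapsing steps into the required $\ge$; notably no monotonicity of $h$ is ever used. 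The main obstacle is purely the discovery in the second paragraph: one must choose, in each ordering-case, which single midpoint to split across its two vertices and which two to split across the extreme vertex and the centroid, arranged so that the off-centroid weights sum to $\tfrac12$ and the centroid weights to $\tfrac32$. Once the identity $\beta+\gamma=\tfrac12$ (and the check $\beta,\gamma\in[0,1]$) is secured, the super/subadditivity of $h$ does all the remaining work.
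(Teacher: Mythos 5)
Your proof is correct and follows essentially the same route as the paper's: split $\frac{x+y}{2}$ across $x$ and $y$, write the other two midpoints as convex combinations of the extreme vertex and the centroid with off-centroid weights summing to $\tfrac{1}{2}$ (the paper's $s+t=\tfrac{3}{2}$ is exactly your $\beta+\gamma=\tfrac{1}{2}$ in complementary variables), and collapse the $h$-weights via super(sub)additivity. If anything, your write-up is more complete than the paper's, which treats only the ordering case $y\le\frac{x+y+z}{3}$ and asserts the degenerate case $x=y=z$ without the justification $2h(\tfrac{1}{2})\ge 1$ and $h(\tfrac{3}{2})\ge 3h(\tfrac{1}{2})$ that you supply.
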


\begin{proof}
	$f$ is  ${\rm{A_tA_h}}$-convex iff the inequality
	\begin{align*}
	f\left( {t\alpha  + \left( {1 -
			t} \right)\beta } \right) \le  h\left( t \right) f\left( \alpha
	\right) +  h\left(1- t \right)f\left( \beta \right), \qquad 0\le
	t\le 1,
	\end{align*}
	holds for all $\alpha,\beta\in I$. Assume that $x\le y \le z$. If
	$y \le \frac{x+y+z}{3}$, then
	\begin{align*}
	\frac{{x + y + z}}{3} \le \frac{{x + z}}{2} \le z
	\,\,\text{and}\,\, \frac{{x + y + z}}{3} \le \frac{{y + z}}{2} \le
	z,
	\end{align*}
	so that there exist two numbers $s,t \in \left[0,1\right]$
	satisfying
	\begin{align*}
	\frac{{x + z}}{2} = s\left( {\frac{{x + y + z}}{3}} \right) +
	\left( {1 - s} \right)z,
	\end{align*}
	and
	\begin{align*}
	\frac{{y + z}}{2} = t\left( {\frac{{x + y + z}}{3}} \right) +
	\left( {1 - t} \right)z.
	\end{align*}
	Summing up, we get
	$\left(x+y-2z\right)\left(s+t-\frac{3}{2}\right)=0$. If
	$x+y-2z=0$, then $x = y = z$, and Popoviciu's inequality holds.
	
	If $s+t=\frac{3}{2}$, then since $f$ is ${\rm{A_tA_h}}$-convex, we
	have
	\begin{align*}
	f\left( {\frac{{x + z}}{2}} \right) = f\left[ {s\left( {\frac{{x + y + z}}{3}} \right) + \left( {1 - s} \right)z}
	\right]
	\le h\left( s \right) f\left( {\frac{{x + y + z}}{3}} \right)+h\left( {1 - s} \right) f\left( z
	\right),
	\end{align*}
	\begin{align*}
	f\left( {\frac{{y + z}}{2}} \right) = f\left[ {t\left( {\frac{{x + y + z}}{3}} \right) + \left( {1 - t} \right)z}
	\right]
	\le h\left( t \right) f\left( {\frac{{x + y + z}}{3}} \right)+h\left( {1 - t} \right) f\left( z
	\right),
	\end{align*}
	and
	\begin{align*}
	f\left( {\frac{{x + y}}{2}} \right) \le h\left( {1/2}
	\right)\left[ {f\left( x \right)+f\left( y \right)} \right].
	\end{align*}
	Summing up these inequalities taking into account that $h$ is
	superadditive we get
	\begin{align*}
	&f\left( {\frac{{x + z}}{2}} \right)+f\left( {\frac{{y + z}}{2}} \right)+f\left( {\frac{{x + y}}{2}} \right) \\
	&\le  h\left( s \right)f\left( {\frac{{x + y + z}}{3}} \right)+h\left( {1 - s} \right)f\left( z
	\right) +h\left( t \right)f\left( {\frac{{x + y + z}}{3}} \right)+h\left( {1 - t} \right)f\left( z \right)\\
	&\qquad+ h\left( {1/2}\right)\left[ {f\left( x \right)+f\left( y \right)}
	\right]\\
	&= \left[ {h\left( s \right) + h\left( t \right)} \right]f\left( {\frac{{x + y + z}}{3}} \right)+ \left[ {h\left( {1 - s} \right) + h\left( {1 - t} \right)}  \right]f\left( z \right)+  h\left( {1/2}\right)\left[ {f\left( x \right)+f\left( y \right)}
	\right] \\
	&\le h\left( {s + t} \right) f\left( {\frac{{x + y + z}}{3}} \right) +h\left( {2 - s - t} \right)  f\left( z \right)+h\left(1/2 \right)\left[ {f\left( x \right)+f\left( y \right)} \right]   \\
	&= h\left( {3/2} \right) f\left( {\frac{{x + y + z}}{3}} \right) +h\left( {1/2} \right)  f\left( z \right)+h\left(1/2 \right)\left[ {f\left( x \right)+f\left( y \right)} \right] \\
	&= h\left( {3/2} \right) f\left( {\frac{{x + y + z}}{3}} \right)    +h\left(1/2 \right)\left[ {f\left( x \right)+f\left( y \right)+f\left( z \right)}
	\right],
	\end{align*}
	as desired.
\end{proof}

\begin{remark}
Setting $z=y$ 	in \eqref{eq2.1}, then we have
	\begin{align*}
	2f\left( {\frac{{x + y}}{2}} \right)+f\left( {y}
	\right) \le \,(\ge)\,  h\left( {3/2} \right) f\left( {\frac{{x +
				2y}}{3}} \right) +h\left(1/2 \right)\left[ {f\left( x
		\right)+2f\left( y \right)} \right].
	\end{align*}
	for all $x,y\in I$.
\end{remark}

\begin{remark}
Setting $z=y$ 	in \eqref{eq2.1}, then we get
	\begin{align*}
	2f\left( {\frac{{x + y}}{2}} \right)+f\left( {y}
	\right) \le\,(\ge)\, h\left( {3/2} \right) f\left( {\frac{{x +
				2y}}{3}} \right) +h\left(1/2 \right)\left[ {f\left( x
		\right)+2f\left( y \right)} \right],
	\end{align*}
	for all $x,y\in I$.
\end{remark}

\begin{corollary}
\label{cor1}	Let $h: I\to \left(0,\infty\right)$ be a non-negative
	super(sub)additive function. If $f : I \to \left(0,\infty\right)$
	be an ${\rm{A_tA_t}}$-convex (concave) function, then
	\begin{align*}
	\frac{2}{3}\left[{ f\left( {\frac{{x + z}}{2}} \right)+f\left(
		{\frac{{y + z}}{2}} \right)+f\left( {\frac{{x + y}}{2}}
		\right)}\right] \le\,(\ge)\,  f\left( {\frac{{x + y + z}}{3}}
	\right) +\frac{f\left( x \right)+f\left( y \right)+f\left( z
		\right)}{3},
	\end{align*}
	for all $x,y,z\in I$. The equality holds when $f$ is affine.
\end{corollary}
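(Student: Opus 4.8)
The plan is to deduce this corollary directly from Theorem~\ref{thm1} by specializing to $h(t)=t$. First I would note that the identity function $h(t)=t$ is \emph{additive}, since $h(s+t)=s+t=h(s)+h(t)$, and hence is simultaneously superadditive and subadditive. Consequently Theorem~\ref{thm1} applies in both of its branches at once: the convex case yields the $\le$ inequality and the concave case yields the $\ge$ inequality. Observe also that for $h(t)=t$ the defining relation of an ${\rm{A_tA_h}}$-convex function reduces to $f(t\alpha+(1-t)\beta)\le tf(\alpha)+(1-t)f(\beta)$, which is precisely ${\rm{A_tA_t}}$-convexity, matching the hypothesis here.

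Next I would substitute $h(3/2)=3/2$ and $h(1/2)=1/2$ into the conclusion \eqref{eq2.1} of Theorem~\ref{thm1}, which then reads
\begin{align*}
f\left(\frac{x+z}{2}\right)+f\left(\frac{y+z}{2}\right)+f\left(\frac{x+y}{2}\right)\le\,(\ge)\,\frac{3}{2}f\left(\frac{x+y+z}{3}\right)+\frac{1}{2}\left[f(x)+f(y)+f(z)\right].
\end{align*}
Multiplying both sides by the positive constant $\frac{2}{3}$ preserves the inequality and, since $\frac{2}{3}\cdot\frac{3}{2}=1$ and $\frac{2}{3}\cdot\frac{1}{2}=\frac{1}{3}$, produces exactly the claimed inequality.

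Finally, for the equality assertion I would check directly that an affine function $f(x)=ax+b$ turns both sides into the same quantity. Using the identity $\frac{x+z}{2}+\frac{y+z}{2}+\frac{x+y}{2}=x+y+z$, the left-hand side equals $\frac{2}{3}\bigl(a(x+y+z)+3b\bigr)=\frac{2a}{3}(x+y+z)+2b$, while the right-hand side equals $\frac{a}{3}(x+y+z)+b+\frac{1}{3}\bigl(a(x+y+z)+3b\bigr)=\frac{2a}{3}(x+y+z)+2b$; the two agree, recovering the classical equality case $f(x)=x$ of Popoviciu's theorem. Because the whole argument is a plain specialization, I do not expect any genuine obstacle; the only point meriting a moment's attention is recognizing that the additivity of $h(t)=t$ simultaneously activates both the superadditive and subadditive hypotheses of Theorem~\ref{thm1}, so that a single computation delivers both the convex ($\le$) and concave ($\ge$) forms of the corollary.
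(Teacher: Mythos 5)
Your proof is correct and is exactly the route the paper intends: Corollary~\ref{cor1} is stated as a direct consequence of Theorem~\ref{thm1}, obtained by taking $h(t)=t$ (additive, hence both superadditive and subadditive, so both the $\le$ and $\ge$ branches apply), substituting $h(3/2)=3/2$ and $h(1/2)=1/2$, and rescaling by $\tfrac{2}{3}$. Your explicit verification of the equality case for affine $f$ is a welcome addition, since the paper merely asserts it.
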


\begin{example}
	\begin{enumerate}
		\item Let $f\left(x\right)=x^p$, $p\ge1$ then $f$ is
		${\rm{A_tA_t}}$-convex for all $x>0$. Applying Corollary
		\ref{cor1}, we get
		\begin{align*}
		\frac{2}{3}\left[{ \left( {\frac{{x + z}}{2}} \right)^p+\left(
			{\frac{{y + z}}{2}} \right)^p+ \left( {\frac{{x + y}}{2}}
			\right)^p}\right] \le \left( {\frac{{x + y + z}}{3}} \right)^p +
		\frac{x^p+y^p+z^p}{3},
		\end{align*}
		for all $x,y,z>0$.
		
		\item Let $f\left(x\right)=-\log x$, then $f$ is
		${\rm{A_tA_t}}$-convex for all $0<x<1$. Applying Corollary
		\ref{cor1}, we get
		\begin{align*}
		\left( {x + z} \right)^2 \left( {y + z} \right)^2 \left( {x + y}
		\right)^2  \ge \frac{{64}}{{27}}\left( {x + y + z} \right)^3
		\left( {xyz} \right) ,
		\end{align*}
		for all $1>x,y,z>0$.
	\end{enumerate}
\end{example}
\begin{corollary}
	\label{cor2} If $f : I \to \mathbb{R}$ be an
	${\rm{A_tA_{1/t}}}$-concave
	function, then
	\begin{align*}
	\frac{3}{2}\left[{ f\left( {\frac{{x + z}}{2}} \right)+f\left(
		{\frac{{y + z}}{2}} \right)+f\left( {\frac{{x + y}}{2}}
		\right)}\right]
	\le\,(\ge)\,  f\left( {\frac{{x + y + z}}{3}} \right)
	+3\left[{f\left( x \right)+f\left( y \right)+f\left( z
		\right)}\right],
	\end{align*}
	for all $x,y,z\in I$.
\end{corollary}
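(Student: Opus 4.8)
The plan is to obtain Corollary~\ref{cor2} as the instance $h(t)=1/t$ of the concave half of Theorem~\ref{thm1}, exactly as Corollary~\ref{cor1} is the instance $h(t)=t$, followed by a rescaling. First I would note that $h(t)=t^{-1}$ is subadditive on the relevant range --- this is the case $k=-1$ of the classification recorded just before the statement, since $\frac{1}{s+t}\le\frac1s+\frac1t$ for all $s,t>0$ --- and then evaluate the two constants appearing in \eqref{eq2.1}, namely $h(3/2)=\tfrac23$ and $h(1/2)=2$. Granting the concave conclusion of Theorem~\ref{thm1}, this reads
\begin{align*}
f\left(\tfrac{x+z}{2}\right)+f\left(\tfrac{y+z}{2}\right)+f\left(\tfrac{x+y}{2}\right)\ge \tfrac23\,f\left(\tfrac{x+y+z}{3}\right)+2\left[f(x)+f(y)+f(z)\right],
\end{align*}
and multiplying through by $\tfrac32$ gives precisely the $(\ge)$ form of Corollary~\ref{cor2}, which is the branch relevant to the concave hypothesis.

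To justify the specialization I would rerun the proof of Theorem~\ref{thm1} line by line with $h(t)=1/t$, replacing the generic concave inequality by the defining inequality of ${\rm{A_tA_{1/t}}}$-concavity, written as $f\left(t\alpha+(1-t)\beta\right)\ge \tfrac1t f(\alpha)+\tfrac{1}{1-t}f(\beta)$. Ordering $x\le y\le z$ and treating the case $y\le\frac{x+y+z}{3}$, the same elementary comparison yields $s,t\in[0,1]$ with $s+t=\tfrac32$ and
\begin{align*}
\tfrac{x+z}{2}=s\cdot\tfrac{x+y+z}{3}+(1-s)z,\qquad \tfrac{y+z}{2}=t\cdot\tfrac{x+y+z}{3}+(1-t)z.
\end{align*}
Applying the concavity inequality to these two decompositions and to $\tfrac{x+y}{2}=\tfrac12x+\tfrac12y$ and adding produces
\begin{align*}
f\left(\tfrac{x+z}{2}\right)+f\left(\tfrac{y+z}{2}\right)+f\left(\tfrac{x+y}{2}\right)\ge\left(\tfrac1s+\tfrac1t\right)f\left(\tfrac{x+y+z}{3}\right)+\left(\tfrac{1}{1-s}+\tfrac{1}{1-t}\right)f(z)+2f(x)+2f(y).
\end{align*}
Since $s+t=\tfrac32$ and $(1-s)+(1-t)=\tfrac12$, subadditivity of $1/t$ gives $\tfrac1s+\tfrac1t\ge\tfrac23$ and $\tfrac{1}{1-s}+\tfrac{1}{1-t}\ge2$, the coefficient bounds that would collapse the right-hand side to $\tfrac23 f\!\left(\tfrac{x+y+z}{3}\right)+2[f(x)+f(y)+f(z)]$.

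The step I expect to be the genuine obstacle is precisely this last collapse, and it is where Corollary~\ref{cor2} departs from the setting of Theorem~\ref{thm1}. In Theorem~\ref{thm1} the hypothesis $f:I\to(0,\infty)$ forces $f\ge0$, so multiplying the coefficient estimates $\tfrac1s+\tfrac1t\ge\tfrac23$ and $\tfrac{1}{1-s}+\tfrac{1}{1-t}\ge2$ by the nonnegative numbers $f\!\left(\tfrac{x+y+z}{3}\right)$ and $f(z)$ keeps the inequality pointing the right way. Here $f$ is only real-valued, and the discussion preceding the corollary shows that an ${\rm{A_tA_{1/t}}}$-concave function must in fact satisfy $f\le0$; multiplying a lower bound on a coefficient by a nonpositive value reverses the inequality, so the estimate cannot be quoted from Theorem~\ref{thm1} verbatim. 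The crux is therefore the sign bookkeeping at the centroid $\tfrac{x+y+z}{3}$ and at $z$: one must confirm that, with $f\le0$, the subadditivity of $1/t$ is being used in the admissible direction (equivalently, recast everything in terms of $g=-f\ge0$, where the same subadditivity must now be made to yield an \emph{upper} bound). Once this sign issue is settled, the degenerate case $x=y=z$ follows immediately from reflexivity of the mean, and a final multiplication by $\tfrac32$ completes the argument.
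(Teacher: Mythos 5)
Your first two paragraphs are exactly the paper's intended derivation: Corollary \ref{cor2} is meant to be Theorem \ref{thm1} with the subadditive kernel $h(t)=1/t$, the constants $h(3/2)=\frac23$ and $h(1/2)=2$, and a final multiplication by $\frac32$; your line-by-line rerun (the decompositions with $s+t=\frac32$, the three concavity estimates, and the coefficient bounds $\frac1s+\frac1t\ge\frac23$, $\frac{1}{1-s}+\frac{1}{1-t}\ge 2$) is the proof of Theorem \ref{thm1} transcribed for this $h$. You have also located the one step that does not transfer: collapsing $\left(\frac1s+\frac1t\right)f\left(\frac{x+y+z}{3}\right)+\left(\frac{1}{1-s}+\frac{1}{1-t}\right)f(z)$ to $\frac23 f\left(\frac{x+y+z}{3}\right)+2f(z)$ requires $f\ge 0$, whereas the paper itself proves that every ${\rm{A_tA_{1/t}}}$-concave function satisfies $f\le 0$. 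But your proposal then defers the crux (``once this sign issue is settled''), and that is a genuine gap, not a deferrable formality.

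In fact the gap cannot be closed, because the corollary is false for the paper's own class. Observe that any $f:I\to\mathbb{R}$ with $-4\le f\le -1$ is ${\rm{A_tA_{1/t}}}$-concave: for all $t\in(0,1)$,
\begin{align*}
\frac{1}{1-t}\,f(x)+\frac{1}{t}\,f(y)\;\le\;-\left(\frac{1}{1-t}+\frac{1}{t}\right)\;\le\;-4\;\le\;f\left((1-t)x+ty\right).
\end{align*}
Now take $x=1$, $y=\frac65$, $z=3$; the three midpoints $2$, $\frac{21}{10}$, $\frac{11}{10}$, the centroid $\frac{26}{15}$, and the points $1$, $\frac65$, $3$ are seven distinct numbers. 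Let $f$ equal $-4$ at the three midpoints and $-1$ at the other four points, extended by $-1$ elsewhere (one can even make $f$ continuous, since membership only requires $-4\le f\le-1$). Then the left-hand side of Corollary \ref{cor2} is $\frac32(-12)=-18$, the right-hand side is $-1+3(-3)=-10$, and $-18\ge-10$ fails. Equivalently, in your $g=-f$ reformulation, $g$ is a Godunova--Levin function, and the required upper bound $\frac32\sum g(\mathrm{midpoints})\le g(\mathrm{centroid})+3\left[g(x)+g(y)+g(z)\right]$ is destroyed by a bump of height $4$ placed at a midpoint. So the sign obstruction you identified is fatal: it invalidates not only your proposed completion but also the paper's own implicit proof (Theorem \ref{thm1} presupposes $f>0$, which is impossible for this class), and no proof can exist unless the class or the inequality is modified.
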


\begin{example}
	Let $f\left(x\right)= \log x$, then $f$ is an
	${\rm{A_tA_{1/t}}}$-concave for $0<x<1$. Applying
	Corollary \ref{cor2}, we get
	\begin{align*}
	\left( {x + z} \right)^3 \left( {y + z} \right)^3 \left( {x + y}
	\right)^3  \ge \frac{{512}}{9}\left( {x + y + z} \right)^2 \left(
	{xyz} \right)^6,
	\end{align*}
	for all $0< x,y,z<1$.
\end{example}

\begin{corollary}
	\label{cor3}
	If $f : I \to  \mathbb{R}$
	be an  ${\rm{A_tA_1}}$-concave function, then
	\begin{align*}
	f\left( {\frac{{x + z}}{2}} \right)+f\left( {\frac{{y + z}}{2}}
	\right)+f\left( {\frac{{x + y}}{2}} \right) \le\,(\ge)\,  f\left(
	{\frac{{x + y + z}}{3}} \right) + f\left( x \right)+f\left( y
	\right)+f\left( z \right),
	\end{align*}
	for all $x,y,z\in I$.
\end{corollary}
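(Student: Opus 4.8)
The plan is to derive Corollary \ref{cor3} as the specialization of Theorem \ref{thm1} to the constant weight $h \equiv 1$, in direct analogy with Corollaries \ref{cor1} and \ref{cor2}, which correspond to the choices $h(t)=t$ and $h(t)=1/t$. First I would match the class: for $h \equiv 1$ the defining ${\rm{A_tA_h}}$-concavity inequality $f\left(t\alpha+(1-t)\beta\right) \ge h(t)f(\alpha)+h(1-t)f(\beta)$ reduces to $f\left(t\alpha+(1-t)\beta\right) \ge f(\alpha)+f(\beta)$, which is exactly the ${\rm{A_tA_1}}$-concavity used in the statement. Second, since $h(s+t)=1\le 2=h(s)+h(t)$, the constant weight is subadditive, so we are in the ``(sub)/(concave)/$(\ge)$'' branch of Theorem \ref{thm1}. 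Finally, evaluating the two constants on the right-hand side of \eqref{eq2.1} gives $h(3/2)=1$ and $h(1/2)=1$, so substituting these collapses \eqref{eq2.1} to the asserted inequality for all $x,y,z\in I$.

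To be safe I would not invoke Theorem \ref{thm1} as a black box but re-run its Popoviciu decomposition for this particular weight. Assuming $x\le y\le z$ with $y\le \frac{x+y+z}{3}$, I would write $\frac{x+z}{2}=s\cdot\frac{x+y+z}{3}+(1-s)z$ and $\frac{y+z}{2}=t\cdot\frac{x+y+z}{3}+(1-t)z$ with $s+t=\frac{3}{2}$, apply ${\rm{A_tA_1}}$-concavity to each of these convex combinations together with $f\left(\frac{x+y}{2}\right)\ge f(x)+f(y)$, and sum the three resulting inequalities. The symmetric case $y\ge\frac{x+y+z}{3}$ is handled by interchanging the roles of the endpoints, and the degenerate case $x=y=z$ is immediate.

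The step I expect to be the main obstacle is the collapsing of the centroid and endpoint coefficients once the three inequalities are summed, because in Theorem \ref{thm1} the passage from $[h(s)+h(t)]f\left(\frac{x+y+z}{3}\right)+[h(1-s)+h(1-t)]f(z)$ to $h(s+t)f\left(\frac{x+y+z}{3}\right)+h(2-s-t)f(z)$ relies simultaneously on subadditivity and on the nonnegativity of $f$. For the ${\rm{A_tA_1}}$ class this nonnegativity cannot be taken for granted: putting $x=y$ in the defining inequality forces $f(x)\ge 2f(x)$, hence $f(x)\le 0$, so these functions are never the strictly positive ones to which Theorem \ref{thm1} applies (compare Remark \ref{cor}). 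I would therefore track the sign of each term explicitly through the summation and verify that, with $h(s)+h(t)=2$ and $h(s+t)=h(3/2)=1$, the combination still produces the claimed $(\ge)$ inequality rather than its reverse; this sign bookkeeping, and not the decomposition itself, is where the real care is needed.
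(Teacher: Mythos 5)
Your reduction of Corollary \ref{cor3} to Theorem \ref{thm1} with $h\equiv 1$ is exactly what the paper intends (the paper gives no separate proof of this corollary), and you have correctly isolated the one step that cannot be taken for granted. But your proposal ends by promising to ``track the sign of each term and verify'' that the collapsing step still works, and that verification in fact fails. With $h\equiv 1$ the three applications of ${\rm{A_tA_1}}$-concavity in the paper's decomposition give, writing $c=\frac{x+y+z}{3}$,
\begin{align*}
f\left( {\frac{{x + z}}{2}} \right)+f\left( {\frac{{y + z}}{2}}
	\right)+f\left( {\frac{{x + y}}{2}} \right) \ge 2f\left( c \right) + 2f\left( z \right) + f\left( x \right)+f\left( y \right),
\end{align*}
and to arrive at the stated right-hand side one must pass from $2f(c)+2f(z)$ down to $f(c)+f(z)$, which is legitimate if and only if $f(c)+f(z)\ge 0$. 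As you yourself observed, every ${\rm{A_tA_1}}$-concave function satisfies $f\le 0$ (put $x=y$ in the defining inequality), so this step goes in the \emph{wrong} direction whenever $f(c)+f(z)<0$: the bound produced by the summation is weaker than the claimed inequality by precisely the nonpositive quantity $f(c)+f(z)$. The same obstruction blocks the cruder route via $f\left(\frac{x+y}{2}\right)\ge f(x)+f(y)$ for all three pairs, since the class property forces $f(c)\ge f(x)+f(y)+f(z)$, again with the inequality pointing the unhelpful way.

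The gap is not repairable, because the corollary is false as stated. Note first that every nonpositive, nonincreasing function on $I$ is ${\rm{A_tA_1}}$-concave: if $a\le b$ and $p=ta+(1-t)b$ with $t\in(0,1)$, then $f(p)\ge f(b)\ge f(a)+f(b)$. Now take $I=(0,\infty)$ and the continuous function defined by $f(u)=0$ for $u\le 4$, $f(u)=4-u$ for $4\le u\le 5$, and $f(u)=-1$ for $u\ge 5$, and choose $x=1$, $y=2$, $z=9$. Then the left-hand side equals $f(5)+f(11/2)+f(3/2)=-1-1+0=-2$, while the right-hand side equals $f(4)+f(1)+f(2)+f(9)=0+0+0-1=-1$, so the claimed ``$\ge$'' inequality for the concave class reads $-2\ge -1$ and fails. (The paper's own illustration, $f=\log$ on $(0,1)$, does satisfy the conclusion, but that is an instance of AM--GM, not evidence for the general statement.) So the honest outcome of your plan is not a proof but a refutation: Theorem \ref{thm1} genuinely needs $f>0$ in the collapsing step, no ${\rm{A_tA_1}}$-concave function is positive, and the corollary cannot be rescued by more careful bookkeeping.
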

\begin{example}
	Let $f\left(x\right)= \log x$,  which is
	a non-negative  ${\rm{A_tA_1}}$-concave for all $0<x<1$.
	Applying Corollary \ref{cor3}, we get
	\begin{align*}
	\left( {x + z} \right)\left( {y + z} \right)\left( {x + y} \right)
	\ge \frac{8}{3}\left( {x + y + z} \right)\left( {xyz} \right),
	\end{align*}
	for all $0< x,y,z< 1$.
\end{example}

\begin{corollary}
	\label{cor4}In Theorem \ref{thm1}.
	\begin{enumerate}
		\item If $h : J \to \left(0,\infty\right)$ is a nonnegative is
		superadditive and $f : I \to \left(0,\infty\right)$ is an
		${\rm{A_tA_h}}$-convex and subadditive, then
		\begin{multline*}
		f\left( {x+y+z} \right)\le f\left( {\frac{{x + z}}{2}}
		\right)+f\left( {\frac{{y + z}}{2}} \right)+f\left( {\frac{{x +
					y}}{2}} \right)
		\\
		\le  h\left( {3/2} \right) f\left( {\frac{{x + y + z}}{3}} \right)
		+h\left(1/2 \right)\left[ {f\left( x \right)+f\left( y
			\right)+f\left( z \right)} \right]
		\\
		\le h\left( {3/2} \right) \left[{f\left( {\frac{x}{3}}
			\right)+f\left( {\frac{y}{3}} \right)+f\left( {\frac{z}{3}}
			\right) }\right] +h\left(1/2 \right)\left[ {f\left( x
			\right)+f\left( y \right)+f\left( z \right)} \right],
		\end{multline*}
		for all $x,y,z\in I$. If $h$ is nonnegative subadditive on $J$ and
		$f$ is an  ${\rm{A_tA_h}}$-concave and superadditive, then the
		inequality is reversed.

		\item If $h : J \to \left(0,\infty\right)$ is a nonnegative is
		superadditive and $f : I \to \left(0,\infty\right)$ is an
		${\rm{A_tA_h}}$-convex  and superadditive, then
		\begin{multline*}
		f\left( {\frac{{x + z}}{2}} \right)+f\left( {\frac{{y + z}}{2}}
		\right)+f\left( {\frac{{x + y}}{2}} \right)
		\\
		\le  h\left( {3/2} \right) f\left( {\frac{{x + y + z}}{3}} \right)
		+h\left(1/2 \right)\left[ {f\left( x \right)+f\left( y
			\right)+f\left( z \right)} \right]
		\\
		\le h\left( {3/2} \right) f\left( {\frac{{x + y + z}}{3}} \right)
		+h\left(1/2 \right)f\left( x+y+z \right),
		\end{multline*}
		for all $x,y,z\in I$. If $h$ is a nonnegative is subadditive and
		$f$ is an ${\rm{A_tA_h}}$-concave and subadditive, then the
		inequality is reversed.
	\end{enumerate}
\end{corollary}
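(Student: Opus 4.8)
The plan is to notice that in each of the two chains the central inequality is \emph{exactly} the conclusion of Theorem \ref{thm1}, so that the only new content lies in the two flanking estimates. These will follow entirely from the additivity hypothesis on $f$ together with the positivity of the weights $h(3/2)$ and $h(1/2)$ (recall that $h$ takes values in $(0,\infty)$, so multiplying an inequality by either weight preserves its direction). Thus both parts share the same skeleton: invoke Theorem \ref{thm1} for the middle link, and attach the outer links by one-line applications of sub/superadditivity.

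For part (1), I would first record the elementary identities
\[
\frac{x+z}{2}+\frac{y+z}{2}+\frac{x+y}{2}=x+y+z,\qquad \frac{x}{3}+\frac{y}{3}+\frac{z}{3}=\frac{x+y+z}{3}.
\]
Applying subadditivity of $f$ to the left identity gives
\[
f(x+y+z)\le f\!\left(\frac{x+z}{2}\right)+f\!\left(\frac{y+z}{2}\right)+f\!\left(\frac{x+y}{2}\right),
\]
which is the leftmost inequality. For the rightmost one, subadditivity applied to the second identity yields
\[
f\!\left(\frac{x+y+z}{3}\right)\le f\!\left(\frac{x}{3}\right)+f\!\left(\frac{y}{3}\right)+f\!\left(\frac{z}{3}\right);
\]
multiplying through by $h(3/2)>0$ and adding the unchanged term $h(1/2)\bigl[f(x)+f(y)+f(z)\bigr]$ to both sides produces the last line. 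Splicing the conclusion of Theorem \ref{thm1} in as the middle link then closes the chain.

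For part (2) there is no lower extension: the chain begins at the sum of the three midpoint values, whose upper bound is again Theorem \ref{thm1}. For the terminal inequality I would instead use superadditivity of $f$ to obtain
\[
f(x)+f(y)+f(z)\le f(x+y+z),
\]
multiply by $h(1/2)>0$, and add the unchanged term $h(3/2)f\!\left(\frac{x+y+z}{3}\right)$ to both sides.

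Finally, the reversed statements (with $f$ concave and the opposite additivity) are obtained by flipping every inequality above: Theorem \ref{thm1} supplies the reversed middle link, the superadditivity (respectively subadditivity) hypothesis on $f$ reverses the outer estimates, and the positivity of $h$ again guarantees that scaling by $h(3/2)$ or $h(1/2)$ respects the reversed direction. I do not expect any genuine obstacle here; the only point demanding care is the bookkeeping of which way each estimate points, since every individual step is an immediate consequence of sub/superadditivity and the positivity of the weights.
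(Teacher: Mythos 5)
Your proof is correct and is essentially the argument the paper intends: the corollary is stated without a separate proof precisely because it follows from splicing Theorem \ref{thm1} into the middle of each chain and obtaining the outer links from the identities $\frac{x+z}{2}+\frac{y+z}{2}+\frac{x+y}{2}=x+y+z$ and $\frac{x}{3}+\frac{y}{3}+\frac{z}{3}=\frac{x+y+z}{3}$ via sub/superadditivity of $f$, with positivity of $h(1/2)$ and $h(3/2)$ preserving directions. Your handling of the reversed cases matches the paper's convention as well, so there is nothing to add.
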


\subsection{The case when $f$ is $h$-${\rm{A_tG_t}}$-convex}

\begin{theorem}
	\label{thm3} Let $h: I\to \left(0,\infty\right)$ be a non-negative
	super(sub)additive
	function. If $f : I \to \left(0,\infty\right)$ be   an
	${\rm{A_tG_h}}$-convex (concave) function, then
	\begin{align}
	f\left( {\frac{{x + z}}{2}} \right)f\left( {\frac{{y + z}}{2}} \right)f\left( {\frac{{x + y}}{2}}
	\right)
	\le \, (\ge)
	\left[ {f\left( {\frac{{x + y + z}}{3}} \right)} \right]^{h\left(
		{3/2} \right)} \left[ {f\left( x \right)f\left( y \right)f\left( z
		\right)} \right]^{h\left( {1/2} \right)},\label{eq2.2}
	\end{align}
	for all $x,y,z\in I$.
\end{theorem}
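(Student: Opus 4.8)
The plan is to mirror the proof of Theorem \ref{thm1} verbatim, replacing additive combinations of function values by multiplicative ones throughout. Recall from \eqref{eqhAG} that $f$ being $A_tG_h$-convex means $f(t\alpha+(1-t)\beta)\le[f(\alpha)]^{h(t)}[f(\beta)]^{h(1-t)}$ for all $\alpha,\beta\in I$ and $0\le t\le 1$. As in Theorem \ref{thm1}, I would assume without loss of generality that $x\le y\le z$ and treat the case $y\le\frac{x+y+z}{3}$, the other case being analogous (there one decomposes $\frac{x+y}{2}$ and $\frac{x+z}{2}$ against $x$ and $\frac{x+y+z}{3}$, handling $\frac{y+z}{2}$ with $t=\frac12$). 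This yields $\frac{x+y+z}{3}\le\frac{x+z}{2}\le z$ and $\frac{x+y+z}{3}\le\frac{y+z}{2}\le z$, so there exist $s,t\in[0,1]$ with $\frac{x+z}{2}=s\,\frac{x+y+z}{3}+(1-s)z$ and $\frac{y+z}{2}=t\,\frac{x+y+z}{3}+(1-t)z$; summing these identities forces $(x+y-2z)(s+t-\tfrac32)=0$, and the degenerate factor $x+y-2z=0$ gives $x=y=z$, which is subsumed in the branch $s+t=\tfrac32$ by taking $s=t=\tfrac34$.

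In the branch $s+t=\tfrac32$ I would apply $A_tG_h$-convexity to the two midpoints $\frac{x+z}{2}$ and $\frac{y+z}{2}$ (decomposed via $s$ and $t$ against $\frac{x+y+z}{3}$ and $z$), together with the special case $t=\tfrac12$ giving $f(\frac{x+y}{2})\le[f(x)]^{h(1/2)}[f(y)]^{h(1/2)}$. Multiplying the three resulting inequalities collects the exponent of $f(\frac{x+y+z}{3})$ into $h(s)+h(t)$ and that of $f(z)$ into $h(1-s)+h(1-t)$. Superadditivity of $h$ then supplies $h(s)+h(t)\le h(s+t)=h(\tfrac32)$ and $h(1-s)+h(1-t)\le h(2-s-t)=h(\tfrac12)$, which is the exact multiplicative analogue of the exponent-collecting step of Theorem \ref{thm1}.

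The point I expect to be the main obstacle is passing from these exponent inequalities back to the powered quantities, since $a^{p}\le a^{q}$ for $p\le q$ requires $a\ge 1$. The cleanest way to organize this is to take logarithms from the start: setting $g=\log f$, the defining inequality \eqref{eqhAG} becomes precisely $A_tA_h$-convexity of $g$, so Theorem \ref{thm1} applied to $g$ yields the additive Popoviciu inequality for $g$, and exponentiating (an increasing operation) recovers \eqref{eq2.2}. This reduction makes transparent that the superadditivity step inherits the same positivity requirement already used in Theorem \ref{thm1}, so care is needed at exactly this juncture. Finally, the concave/subadditive case follows by reversing every inequality throughout, producing the $(\ge)$ form of \eqref{eq2.2}.
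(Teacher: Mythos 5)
Your skeleton is the same as the paper's: the paper also reuses the decomposition from Theorem \ref{thm1} (the same $s,t$ with $s+t=\tfrac{3}{2}$), applies \eqref{eqhAG} to $\tfrac{x+z}{2}$, $\tfrac{y+z}{2}$, $\tfrac{x+y}{2}$, multiplies the three inequalities, and collects exponents via super(sub)additivity. Two differences favor you: the paper never treats the second case $y>\tfrac{x+y+z}{3}$ of its own case analysis, which you at least sketch; and your reduction $g=\log f$, which turns \eqref{eqhAG} into ${\rm A_tA_h}$-convexity of $g$ and defers to Theorem \ref{thm1}, is a cleaner organization than the paper's direct multiplicative rerun of that computation.

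More importantly, the obstacle you flagged is a genuine gap in the paper's own proof. The paper asserts, with no justification,
\begin{align*}
\left[ {f\left( {\tfrac{{x + y + z}}{3}} \right)} \right]^{h\left( s \right) + h\left( t \right)} \left[ {f\left( z \right)} \right]^{h\left( {1 - s} \right) + h\left( {1 - t} \right)}
\le
\left[ {f\left( {\tfrac{{x + y + z}}{3}} \right)} \right]^{h\left( {s + t} \right)} \left[ {f\left( z \right)} \right]^{h\left( {2 - s - t} \right)},
\end{align*}
which rests on $a^{p}\le a^{q}$ for $p\le q$ and therefore needs the bases to be at least $1$; in your formulation this is exactly the fact that $g=\log f$ may fail the non-negativity that the superadditivity step of Theorem \ref{thm1} requires. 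The restriction cannot be talked away: Theorem \ref{thm3} is false as stated. Take $h(t)=t^{3}$, which is non-negative and superadditive, and $f\equiv c$ with $0<c<1$. Then $f$ is ${\rm A_tG_h}$-convex, since $t^{3}+(1-t)^{3}\le 1$ gives $c\le c^{\,t^{3}+(1-t)^{3}}$ for all $t\in[0,1]$, but \eqref{eq2.2} would demand $c^{3}\le c^{\,h(3/2)+3h(1/2)}=c^{15/4}$, which fails because $c<1$ and $\tfrac{15}{4}>3$. So both your argument and the paper's actually establish \eqref{eq2.2} only under the additional hypothesis $f\ge 1$ (where your $\log$-reduction is complete), or for additive $h$ (where the exponent step is an equality); the difference is that you identified this restriction explicitly, while the paper passes over it in silence.
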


\begin{proof}
	$f$ is  ${\rm{A_tG_h}}$-convex iff the inequality
	\begin{align*}
	f\left( {t\alpha  + \left( {1 -
			t} \right)\beta } \right) \le \left[ {f\left( \alpha  \right)}
	\right]^{h\left( t \right)} \left[ {f\left( \beta \right)}
	\right]^{h\left(1- t \right)}, \qquad  0\le t\le 1
	\end{align*}
	holds for all $\alpha,\beta \in I$. As in the proof of Theorem
	\ref{thm1}, we have
	$\left(x+y-2z\right)\left(s+t-\frac{3}{2}\right)=0$. If
	$x+y-2z=0$, then $x = y = z$, and Popoviciu's inequality holds.
	
	If $s+t=\frac{3}{2}$, then since $f$ is ${\rm{A_tG_t}}$-convex, we
	have
	\begin{align*}
	f\left( {\frac{{x + z}}{2}} \right) = f\left[ {s\left( {\frac{{x + y + z}}{3}} \right) + \left( {1 - s} \right)z}
	\right]
	\le \left[ {f\left( {\frac{{x + y + z}}{3}} \right)} \right]^{h\left( s \right)} \left[ {f\left( z \right)} \right]^{h\left( {1 - s} \right)}
	\end{align*}
	\begin{align*}
	f\left( {\frac{{y + z}}{2}} \right) = f\left[ {t\left( {\frac{{x + y + z}}{3}} \right) + \left( {1 - t} \right)z}
	\right]
	\le \left[ {f\left( {\frac{{x + y + z}}{3}} \right)} \right]^{h\left( t \right)} \left[ {f\left( z \right)} \right]^{h\left( {1 - t}
		\right)}
	\end{align*}
	and
	\begin{align*}
	f\left( {\frac{{x + y}}{2}} \right) \le \left[ {f\left( x
		\right)f\left( y \right)} \right]^{h\left( {1/2} \right)}
	\end{align*}
	Multiplying these inequalities we get
	\begin{align*}
	&f\left( {\frac{{x + z}}{2}} \right)f\left( {\frac{{y + z}}{2}} \right)f\left( {\frac{{x + y}}{2}} \right) \\
	&\le \left[ {f\left( {\frac{{x + y + z}}{3}} \right)} \right]^{h\left( s \right)} \left[ {f\left( z \right)} \right]^{h\left( {1 - s} \right)} \left[ {f\left( {\frac{{x + y + z}}{3}} \right)} \right]^{h\left( t \right)} \left[ {f\left( z \right)} \right]^{h\left( {1 - t} \right)} \left[ {f\left( x \right)f\left( y \right)} \right]^{h\left( {1/2} \right)}  \\
	&= \left[ {f\left( {\frac{{x + y + z}}{3}} \right)} \right]^{h\left( s \right) + h\left( t \right)} \left[ {f\left( z \right)} \right]^{h\left( {1 - s} \right) + h\left( {1 - t} \right)} \left[ {f\left( x \right)f\left( y \right)} \right]^{h\left( {1/2} \right)}  \\
	&\le \left[ {f\left( {\frac{{x + y + z}}{3}} \right)} \right]^{h\left( {s + t} \right)} \left[ {f\left( z \right)} \right]^{h\left( {2 - s - t} \right)} \left[ {f\left( x \right)f\left( y \right)} \right]^{\frac{1}{2}}  \\
	&= \left[ {f\left( {\frac{{x + y + z}}{3}} \right)} \right]^{h\left( {3/2} \right)} \left[ {f\left( x \right)f\left( y \right)f\left( z \right)} \right]^{h\left( {1/2} \right)}  \\
	\end{align*}
	
\end{proof}

\begin{remark}
Setting $z=y$ 	in \eqref{eq2.2}, then we have
	\begin{align*}
	f^2\left( {\frac{{x + y}}{2}} \right)f\left( {y} \right) \le \,
	(\ge) \left[ {f\left( {\frac{{x + 2y }}{3}} \right)}
	\right]^{h\left( {3/2} \right)} \left[ {f\left( x \right)f^2\left(
		y \right)} \right]^{h\left( {1/2} \right)},
	\end{align*}
	for all $x,y\in I$.
\end{remark}

\begin{corollary}
	\label{cor5}  If $f : I \to \left(0,\infty\right)$ be   an
	${\rm{A_tG_t}}$-convex function, then
	\begin{align*}
	f^2\left( {\frac{{x + z}}{2}} \right)f^2\left( {\frac{{y + z}}{2}}
	\right)f^2\left( {\frac{{x + y}}{2}} \right) \le f^3\left(
	{\frac{{x + y + z}}{3}} \right) f\left( x \right)f\left( y
	\right)f\left( z \right),
	\end{align*}
	for all $x,y,z\in I$. The equality occurred for   $f\left(x\right)={\rm{e}}^x$,
	$x>0$.
\end{corollary}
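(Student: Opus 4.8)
The plan is to derive Corollary~\ref{cor5} as the special case $h(t)=t$ of Theorem~\ref{thm3}. First I would check that this choice of $h$ is admissible for the theorem: as recorded earlier in the paper, the function $h(x)=x^{k}$ is additive exactly when $k=1$, so $h(t)=t$ is simultaneously subadditive and superadditive; moreover, under $h(t)=t$ the notion of ${\rm{A_tG_h}}$-convexity reduces to ordinary ${\rm{A_tG_t}}$-convexity. Hence Theorem~\ref{thm3} applies verbatim to any ${\rm{A_tG_t}}$-convex $f$.

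Next I would substitute $h(3/2)=3/2$ and $h(1/2)=1/2$ into the conclusion~\eqref{eq2.2} of Theorem~\ref{thm3}, obtaining
\begin{align*}
f\left(\frac{x+z}{2}\right)f\left(\frac{y+z}{2}\right)f\left(\frac{x+y}{2}\right)\le\left[f\left(\frac{x+y+z}{3}\right)\right]^{3/2}\left[f(x)f(y)f(z)\right]^{1/2}
\end{align*}
for all $x,y,z\in I$. Since $f>0$, both sides are positive, so squaring preserves the inequality and produces exactly the asserted bound
\begin{align*}
f^{2}\left(\frac{x+z}{2}\right)f^{2}\left(\frac{y+z}{2}\right)f^{2}\left(\frac{x+y}{2}\right)\le f^{3}\left(\frac{x+y+z}{3}\right)f(x)f(y)f(z).
\end{align*}

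Finally, to verify the equality case I would test $f(x)={\rm{e}}^{x}$. This function attains equality in the defining relation of ${\rm{A_tG_t}}$-convexity, since ${\rm{e}}^{t\alpha+(1-t)\beta}=\left({\rm{e}}^{\alpha}\right)^{t}\left({\rm{e}}^{\beta}\right)^{1-t}$, so it is at once ${\rm{A_tG_t}}$-convex and concave and every intermediate step of the deduction collapses to an equality. A direct computation then confirms the claim: the left-hand side equals ${\rm{e}}^{(x+z)+(y+z)+(x+y)}={\rm{e}}^{2(x+y+z)}$, while the right-hand side equals ${\rm{e}}^{(x+y+z)}\,{\rm{e}}^{x+y+z}={\rm{e}}^{2(x+y+z)}$. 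I do not anticipate any genuine obstacle, as the whole argument is a specialization of Theorem~\ref{thm3} followed by squaring; the only point requiring a moment's care is confirming that $h(t)=t$ is a legitimate choice in that theorem, which is immediate from the additivity classification already supplied in the text.
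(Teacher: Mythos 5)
Your proposal is correct and takes essentially the same route as the paper: Corollary~\ref{cor5} is stated there without proof precisely because it is Theorem~\ref{thm3} specialized to $h(t)=t$ (admissible since the identity function is additive, hence both super- and subadditive), with inequality \eqref{eq2.2} then squared — exactly your argument. Your direct verification of the equality case $f(x)={\rm{e}}^{x}$, giving ${\rm{e}}^{2(x+y+z)}$ on both sides, is also accurate.
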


\begin{example}
	$f\left(x\right)=\cosh \left(x\right)$, $x\in \mathbb{R}$
	is ${\rm{A_tG_t}}$-convex function. Applying Corollary \ref{cor5}
	we get
	\begin{align*}
	{\rm{cosh}}^{\rm{2}} \left( {\frac{{x + z}}{2}}
	\right){\rm{cosh}}^{\rm{2}} \left( {\frac{{y + z}}{2}}
	\right){\rm{cosh}}^{\rm{2}} \left( {\frac{{x + y}}{2}} \right)
	\le {\rm{cosh}}^{\rm{3}} \left( {\frac{{x + y + z}}{3}}
	\right)\cosh \left( x \right)\cosh \left( y \right)\cosh \left( z
	\right)
	\end{align*}
	
\end{example}

\begin{corollary}
	\label{cor6}  If $f : I \to \left(0,\infty\right)$ be   an
	${\rm{A_tG_{1/t}}}$-concave function, then
	\begin{align*}
	f^3\left( {\frac{{x + z}}{2}} \right)f^3\left( {\frac{{y + z}}{2}} \right)f^3\left( {\frac{{x + y}}{2}}
	\right)
	\ge
	f^2\left( {\frac{{x + y + z}}{3}} \right)
	f^6\left( x \right)f^6\left( y \right)f^6\left( z \right),
	\end{align*}
	for all $x,y,z\in I$.
\end{corollary}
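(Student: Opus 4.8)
The plan is to read off this inequality as the specialization of Theorem~\ref{thm3} to the weight $h(t)=1/t$, followed by cubing. First I would note that, by the defining relation \eqref{eqhAG}, an ${\rm{A_tG_{1/t}}}$-concave function is exactly an ${\rm{A_tG_h}}$-concave function with $h(t)=t^{-1}$, i.e.\ the reverse inequality in \eqref{eq2.2}. To invoke the concave branch of Theorem~\ref{thm3} I must check the hypothesis on the weight: since $h(t)=t^{-1}=t^{k}$ with $k=-1$, the classification listed earlier in the excerpt shows $h$ is subadditive, which is precisely the ``sub'' case paired with the concave statement of Theorem~\ref{thm3}.

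Next I would evaluate the two constants occurring on the right-hand side of \eqref{eq2.2}. With $h(t)=1/t$ one gets $h(3/2)=2/3$ and $h(1/2)=2$, so the concave form of \eqref{eq2.2} reads
\begin{align*}
f\left( \frac{x+z}{2} \right) f\left( \frac{y+z}{2} \right) f\left( \frac{x+y}{2} \right) \ge \left[ f\left( \frac{x+y+z}{3} \right) \right]^{2/3} \left[ f(x) f(y) f(z) \right]^{2}
\end{align*}
for all $x,y,z\in I$. Since $f$ is positive, cubing both sides preserves the direction of the inequality, and after collecting exponents I obtain
\begin{align*}
f^{3}\left( \frac{x+z}{2} \right) f^{3}\left( \frac{y+z}{2} \right) f^{3}\left( \frac{x+y}{2} \right) \ge f^{2}\left( \frac{x+y+z}{3} \right) f^{6}(x) f^{6}(y) f^{6}(z),
\end{align*}
which is the asserted inequality.

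The genuinely delicate point is not the algebra but the verification that the concave branch of Theorem~\ref{thm3} truly applies to $h(t)=1/t$. As observed just before the statement of this subsection, there is no nonnegative ${\rm{A_tG_{1/t}}}$-concave function with $f(x)>1$, so any admissible $f$ satisfies $0\le f(x)\le 1$; consequently every base appearing in the exponentiations lies in $(0,1]$. I would therefore take care, when auditing the step that replaces $h(s)+h(t)$ by $h(s+t)$ (and $h(1-s)+h(1-t)$ by $h(2-s-t)$) inside the exponents, to track that raising a base in $(0,1]$ to a larger exponent reverses the comparison; this is exactly where the subadditivity of $h$ and the range restriction on $f$ must be made to cooperate, and it is the one place where a careless sign or direction would invalidate the clean reduction to Theorem~\ref{thm3}.
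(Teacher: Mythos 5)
Your proposal is exactly the paper's own derivation: the paper states Corollary \ref{cor6} with no separate proof, as the immediate specialization of the concave ($\ge$) branch of Theorem \ref{thm3} to the subadditive weight $h(t)=1/t$, with $h(3/2)=2/3$ and $h(1/2)=2$, followed by cubing --- precisely your computation. Your closing caution is in fact better founded than you suggest: any ${\rm{A_tG_{1/t}}}$-concave $f$ satisfies $0<f\le 1$ (by the paper's own observation) while $h(t)=1/t$ is subadditive, so $h(s)+h(t)\ge h(s+t)$ and a base in $(0,1]$ raised to the larger exponent is \emph{smaller}, meaning the exponent-replacement step inside the paper's proof of Theorem \ref{thm3} actually runs in the wrong direction here and the hoped-for ``cooperation'' fails; this defect, however, belongs to the paper's proof of the theorem itself and is inherited identically by the paper's implicit proof of the corollary, so it does not distinguish your argument from the paper's.
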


\begin{example}
	$f\left(x\right)=\arcsin \left(x\right)$,
	is $\frac{1}{t}$-${\rm{A_tG_t}}$-concave for $x\in[0,1]$. Applying
	Corollary \ref{cor6} we get
	\begin{multline*}
	\arcsin^3\left( {\frac{{x + z}}{2}} \right)\arcsin^3\left(
	{\frac{{y + z}}{2}} \right)\arcsin^3\left( {\frac{{x + y}}{2}}
	\right)
	\\
	\ge \arcsin^2\left( {\frac{{x + y + z}}{3}} \right)
	\arcsin^6\left( x \right)\arcsin^6\left( y \right)\arcsin^6\left(
	z \right),
	\end{multline*}
	for all $0\le x,y,z\le1$.
\end{example}
\begin{corollary}
	\label{cor7} If $f : I \to \left(0,\infty\right)$ be   an
	$1$-${\rm{A_tG_t}}$-concave function, then
	\begin{align*}
	f\left( {\frac{{x + z}}{2}} \right)f\left( {\frac{{y + z}}{2}} \right)f\left( {\frac{{x + y}}{2}}
	\right)
	\le \, (\ge)
	f\left( {\frac{{x + y + z}}{3}} \right) f\left( x \right)f\left( y
	\right)f\left( z \right),
	\end{align*}
	for all $x,y,z\in I$.
\end{corollary}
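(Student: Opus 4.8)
The plan is to read this off from Theorem~\ref{thm3} by choosing the constant weight $h(t)\equiv 1$. Unwinding Definition~\ref{def5} in the ${\rm{A_tG_h}}$ case \eqref{eqhAG}, a $1$-${\rm{A_tG_t}}$-convex (concave) function is exactly one satisfying
\begin{align*}
f\left(t\alpha+(1-t)\beta\right)\le(\ge)\,[f(\alpha)]^{1}[f(\beta)]^{1}=f(\alpha)f(\beta),\qquad 0\le t\le 1,
\end{align*}
for all $\alpha,\beta\in I$; that is, it is \eqref{eqhAG} with $h\equiv 1$. So the claim should be the $h\equiv 1$ specialisation of \eqref{eq2.2}, and the whole argument ought to reduce to substituting this weight into the inequality already proved.

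First I would check that the constant function $h\equiv 1$ meets the hypothesis of Theorem~\ref{thm3}. It is subadditive, since $h(s+t)=1\le 2=h(s)+h(t)$, which is the branch matching the concave case and the $\ge$ direction (note that $h\equiv 1$ fails strict superadditivity, so it sits exactly at the boundary between the two branches). Granting the applicability, the remaining step is purely arithmetic: because $h$ is constant, $h(3/2)=1$ and $h(1/2)=1$, so substituting into \eqref{eq2.2} collapses every exponent on the right-hand side to $1$ and yields precisely
\begin{align*}
f\left(\tfrac{x+z}{2}\right)f\left(\tfrac{y+z}{2}\right)f\left(\tfrac{x+y}{2}\right)\ge f\left(\tfrac{x+y+z}{3}\right)f(x)f(y)f(z),
\end{align*}
which is the asserted inequality for all $x,y,z\in I$.

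The point that needs care — and the step I expect to be the main obstacle — is the exponent-comparison move inherited from the proof of Theorem~\ref{thm3}, where one passes from $[f(\tfrac{x+y+z}{3})]^{h(s)+h(t)}[f(z)]^{h(1-s)+h(1-t)}$ to $[f(\tfrac{x+y+z}{3})]^{h(3/2)}[f(z)]^{h(1/2)}$. With $h\equiv 1$ these read $[\,\cdot\,]^{2}$ against $[\,\cdot\,]^{1}$, so the direction of the resulting inequality hinges on whether the bases exceed or fall short of $1$. The defining inequality itself constrains this: putting $\alpha=\beta$ gives $f(x)\ge f(x)^{2}$, hence $0<f(x)\le 1$ on all of $I$. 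Tracking this normalisation carefully through the exponent step is the delicate part, and I would verify the sign of that comparison directly for the constant weight rather than invoke Theorem~\ref{thm3} as a black box, since $h\equiv 1$ lies at the boundary of the sub/superadditive dichotomy on which that theorem's proof rests.
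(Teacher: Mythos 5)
Your first two paragraphs reproduce exactly the paper's own (implicit) proof: Corollary \ref{cor7} is stated with no separate argument, as the specialization of Theorem \ref{thm3} to the constant weight $h\equiv 1$, which is subadditive, hence falls in the concave branch, and which collapses the exponents in \eqref{eq2.2} since $h(3/2)=h(1/2)=1$. The trouble is the step you flag at the end and then leave undone: it is not merely ``delicate'', it fails. In the concave branch, multiplying the three lower bounds yields
\begin{align*}
f\left(\tfrac{x+z}{2}\right)f\left(\tfrac{y+z}{2}\right)f\left(\tfrac{x+y}{2}\right)\ge\left[f\left(u\right)\right]^{h(s)+h(t)}\left[f\left(z\right)\right]^{h(1-s)+h(1-t)}\left[f\left(x\right)f\left(y\right)\right]^{h(1/2)},\qquad u=\tfrac{x+y+z}{3},
\end{align*}
and to reach the right-hand side of \eqref{eq2.2} one still needs $\left[f(u)\right]^{h(s)+h(t)}\left[f(z)\right]^{h(1-s)+h(1-t)}\ge\left[f(u)\right]^{h(3/2)}\left[f(z)\right]^{h(1/2)}$. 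For $h\equiv1$ this says $\left[f(u)f(z)\right]^{2}\ge f(u)f(z)$, i.e.\ $f(u)f(z)\ge1$; but, as you observe yourself, the defining inequality forces $0<f\le1$, so $f(u)f(z)\le1$ and the comparison runs the wrong way: the intermediate quantity sits \emph{below} the target, and the chain cannot be closed. (The same defect is already present in the paper's proof of Theorem \ref{thm3}, where converting the exponent $h(s)+h(t)$ into $h(s+t)$ via super/subadditivity tacitly assumes the bases are at least $1$.)

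Moreover, the gap cannot be repaired, because the statement is false. Let $g\ge0$ be any nondecreasing function on $I$ and set $f={\rm e}^{-g}$; for $w$ between $\alpha$ and $\beta$ one has $g(w)\le g\left(\max\left(\alpha,\beta\right)\right)\le g(\alpha)+g(\beta)$, so $f$ is $1$-${\rm{A_tG_t}}$-concave with values in $(0,1]$, and the claimed inequality is equivalent to
\begin{align*}
g\left(\tfrac{x+z}{2}\right)+g\left(\tfrac{y+z}{2}\right)+g\left(\tfrac{x+y}{2}\right)\le g\left(\tfrac{x+y+z}{3}\right)+g(x)+g(y)+g(z).
\end{align*}
Take $g(w)=\max\left(0,\min\left(1,10w-\tfrac{19}{2}\right)\right)$ (continuous, nondecreasing) and $x=\tfrac35$, $y=\tfrac45$, $z=\tfrac32$; note $y\le\tfrac{x+y+z}{3}=\tfrac{29}{30}$, so this is exactly the configuration the paper's proof treats. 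The left-hand side is $g(1.05)+g(1.15)+g(0.7)=1+1+0=2$, while the right-hand side is $\tfrac16+0+0+1=\tfrac76<2$. So the honest completion of your plan --- the verification your last paragraph correctly identifies as the crux --- refutes Corollary \ref{cor7} rather than proving it; your instinct not to accept Theorem \ref{thm3} as a black box at $h\equiv1$ was the right one.
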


\begin{example}
	Let  $f\left(x\right)=\arcsin \left(x\right)$, is
	${\rm{A_tG_1}}$-concave for $x\in[0,1]$. Applying Corollary
	\ref{cor7} we get
	\begin{align*}
	\arcsin\left( {\frac{{x + z}}{2}} \right)\arcsin\left( {\frac{{y +
				z}}{2}} \right)\arcsin\left( {\frac{{x + y}}{2}}
	\right)
  \arcsin\left( {\frac{{x + y + z}}{3}} \right) \arcsin\left( x
	\right)\arcsin\left( y \right)\arcsin\left( z \right),
	\end{align*}
	for all $0\le x,y,z \le 1$.
\end{example}
\begin{corollary}
	\label{cor8}In Theorem \ref{thm3}.
	\begin{enumerate}
		\item If $f : I \to \left(0,\infty\right)$ is an
		${\rm{A_tG_h}}$-convex  and submultiplicative,
		\begin{align*}
		f\left(
		{\frac{\left(x+z\right)\left(y+z\right)\left(x+y\right)}{8}}
		\right)&\le f\left( {\frac{{x + z}}{2}} \right)f\left( {\frac{{y +
					z}}{2}} \right)f\left( {\frac{{x + y}}{2}} \right)  
		\\
		&\le \left[ {f\left( {\frac{{x + y + z}}{3}} \right)}
		\right]^{h\left( {3/2} \right)} \left[ {f\left( x \right)f\left( y
			\right)f\left( z \right)} \right]^{h\left( {1/2} \right)},
		\end{align*}
		for all $x,y,z\in I$. If $f$ is an $h$-${\rm{A_tG_t}}$-concave and
		supermultiplicative, then the inequality is reversed.

		\item If $f : I \to \left(0,\infty\right)$ is an
		${\rm{A_tG_h}}$-convex  and supermultiplicative, then
		\begin{align*}
		f\left( {\frac{{x + z}}{2}} \right)f\left( {\frac{{y + z}}{2}}
		\right)f\left( {\frac{{x + y}}{2}} \right)
		&\le \left[ {f\left( {\frac{{x + y + z}}{3}} \right)}
		\right]^{h\left( {3/2} \right)} \left[ {f\left( x \right)f\left( y
			\right)f\left( z \right)} \right]^{h\left( {1/2} \right)}
		\\
		&\le \left[ {f\left( {\frac{{x + y + z}}{3}} \right)}
		\right]^{h\left( {3/2} \right)} \left[ {f\left( xyz \right)}
		\right]^{h\left( {1/2} \right)},
		\end{align*}
		for all $x,y,z\in I$. If $f$ is an ${\rm{A_tA_h}}$-concave and
		submultiplicative, then the inequality is reversed.
	\end{enumerate}
\end{corollary}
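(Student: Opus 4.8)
The plan is to obtain both parts directly from Theorem~\ref{thm3}, which already furnishes the central inequality
$f\!\left(\frac{x+z}{2}\right)f\!\left(\frac{y+z}{2}\right)f\!\left(\frac{x+y}{2}\right)\le\left[f\!\left(\frac{x+y+z}{3}\right)\right]^{h(3/2)}\left[f(x)f(y)f(z)\right]^{h(1/2)}$
in the convex case (and its reverse in the concave case). The only new ingredient is the multiplicative hypothesis on $f$, invoked once to attach one extra outer inequality to the chain.

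For part (1) I would first note that the leftmost argument factors as $\frac{(x+z)(y+z)(x+y)}{8}=\frac{x+z}{2}\cdot\frac{y+z}{2}\cdot\frac{x+y}{2}$. If $f$ is submultiplicative, i.e.\ $f(st)\le f(s)f(t)$, then $f\!\left(\frac{(x+z)(y+z)(x+y)}{8}\right)\le f\!\left(\frac{x+z}{2}\right)f\!\left(\frac{y+z}{2}\right)f\!\left(\frac{x+y}{2}\right)$, which is the left-hand inequality; the right-hand inequality is precisely the convex case of Theorem~\ref{thm3}. In the reversed statement, supermultiplicativity flips this first step while the concave case of Theorem~\ref{thm3} flips the second, so both inequalities point the opposite way and the composite chain reverses coherently.

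For part (2) the central inequality is again the convex case of Theorem~\ref{thm3}. To append the final inequality I would apply supermultiplicativity twice, $f(x)f(y)\le f(xy)$ and then $f(xy)f(z)\le f(xyz)$, to get $f(x)f(y)f(z)\le f(xyz)$; since $h(1/2)>0$ the map $u\mapsto u^{h(1/2)}$ is increasing on $(0,\infty)$, so $\left[f(x)f(y)f(z)\right]^{h(1/2)}\le\left[f(xyz)\right]^{h(1/2)}$, and multiplying by the common positive factor $\left[f\!\left(\frac{x+y+z}{3}\right)\right]^{h(3/2)}$ yields the stated bound. The concave-and-submultiplicative case follows symmetrically, with submultiplicativity giving $f(x)f(y)f(z)\ge f(xyz)$ and the concave Theorem~\ref{thm3} reversing the central step.

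Because every link is a one-line consequence of Theorem~\ref{thm3} together with the monotonicity of $u\mapsto u^{h(1/2)}$, I expect no real difficulty; the only delicate point is the bookkeeping of directions, ensuring that in each reversed case the multiplicative step and the Theorem~\ref{thm3} step flip simultaneously so the chain stays consistent.
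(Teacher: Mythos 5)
Your proposal is correct and matches the intended argument: the paper leaves Corollary~\ref{cor8} unproved as an immediate consequence of Theorem~\ref{thm3}, with the extra outer inequality obtained exactly as you do, by factoring $\frac{(x+z)(y+z)(x+y)}{8}=\frac{x+z}{2}\cdot\frac{y+z}{2}\cdot\frac{x+y}{2}$ and applying sub/supermultiplicativity (twice), together with the monotonicity of $u\mapsto u^{h(1/2)}$. Your bookkeeping of the reversed cases is also consistent with the paper's statement.
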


\begin{corollary}
	\label{cor9}In Theorem \ref{thm3}.
	\begin{enumerate}
		\item If $f : I \to \left(0,\infty\right)$ is an
		${\rm{A_tG_h}}$-convex  and superadditive,
		\begin{multline*}
		\left[{f\left( {\frac{x }{2}} \right) +f\left( {\frac{z}{2}}
			\right)}\right]\left[{f\left( {\frac{y }{2}} \right) +f\left(
			{\frac{z}{2}} \right)}\right]\left[{f\left( {\frac{x }{2}} \right)
			+f\left( {\frac{y}{2}} \right)}\right]
		\\
		\le f\left( {\frac{{x + z}}{2}} \right)f\left( {\frac{{y + z}}{2}}
		\right)f\left( {\frac{{x + y}}{2}} \right)
		\\
		\le \left[ {f\left( {\frac{{x + y + z}}{3}} \right)}
		\right]^{h\left( {3/2} \right)} \left[ {f\left( x \right)f\left( y
			\right)f\left( z \right)} \right]^{h\left( {1/2} \right)},
		\end{multline*}
		for all $x,y,z\in I$. If $f$ is an  ${\rm{A_tG_h}}$-concave and
		subadditive, then the inequality is reversed.

		\item If $f : I \to \left(0,\infty\right)$ is an
		${\rm{A_tG_h}}$-convex  and subadditive, then
		\begin{multline*}
		f\left( {\frac{{x + z}}{2}} \right)f\left( {\frac{{y + z}}{2}}
		\right)f\left( {\frac{{x + y}}{2}} \right)
		\\
		\le \left[ {f\left( {\frac{{x + y + z}}{3}} \right)}
		\right]^{h\left( {3/2} \right)} \left[ {f\left( x \right)f\left( y
			\right)f\left( z \right)} \right]^{h\left( {1/2} \right)}
		\\
		\le \left[ {f\left( {\frac{{x }}{3}} \right)+f\left( {\frac{{y
				}}{3}} \right)+f\left( {\frac{{ z}}{3}} \right)} \right]^{h\left(
			{3/2} \right)} \left[ {f\left( x \right)f\left( y \right)f\left( z
			\right)} \right]^{h\left( {1/2} \right)},
		\end{multline*}
		for all $x,y,z\in I$. If $f$ is an ${\rm{A_tG_h}}$-concave and
		submultiplicative, then the inequality is reversed.
	\end{enumerate}
\end{corollary}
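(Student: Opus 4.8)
The plan is to obtain both inequality chains in Corollary~\ref{cor9} by appending one elementary monotone estimate to the conclusion \eqref{eq2.2} of Theorem~\ref{thm3}, which already furnishes the central inequality of each part. The only structural facts I will lean on are that $f$ maps into $(0,\infty)$ and that $h$ is strictly positive; these guarantee that multiplying inequalities between nonnegative quantities, and raising positive numbers to the positive power $h(3/2)$, both preserve the direction of an inequality.

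For part~(1) the right-hand inequality is precisely \eqref{eq2.2}, so no further work is required there. For the new left-hand inequality I would apply superadditivity of $f$ argument-by-argument, using $\frac{x+z}{2}=\frac{x}{2}+\frac{z}{2}$ and the two analogous splittings, to get
\[
f\left(\frac{x+z}{2}\right)\ge f\left(\frac{x}{2}\right)+f\left(\frac{z}{2}\right),
\]
together with the corresponding bounds for the $(y,z)$ and $(x,y)$ terms. Because each factor is positive, multiplying these three estimates delivers the stated lower bound for $f\left(\frac{x+z}{2}\right)f\left(\frac{y+z}{2}\right)f\left(\frac{x+y}{2}\right)$.

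Part~(2) is the mirror construction, attaching the elementary estimate on the opposite side. The first inequality is again \eqref{eq2.2}; for the second I would invoke subadditivity of $f$ to bound the mean term,
\[
f\left(\frac{x+y+z}{3}\right)=f\left(\frac{x}{3}+\frac{y}{3}+\frac{z}{3}\right)\le f\left(\frac{x}{3}\right)+f\left(\frac{y}{3}\right)+f\left(\frac{z}{3}\right).
\]
Since the base is positive and the exponent $h(3/2)$ is positive, raising both sides to the power $h(3/2)$ keeps the inequality, and multiplying through by the common positive factor $\left[f(x)f(y)f(z)\right]^{h(1/2)}$ produces the claimed upper bound.

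There is no genuine obstacle here; the single point requiring attention is bookkeeping the direction of each monotone operation---multiplying three positive factor-wise inequalities in part~(1), and exponentiating by the positive number $h(3/2)$ in part~(2)---which is exactly why the standing hypotheses $f:I\to(0,\infty)$ and $h:I\to(0,\infty)$ are used. For the reversed statements one replaces ${\rm{A_tG_h}}$-convexity by concavity (flipping \eqref{eq2.2}) and interchanges super- and subadditivity of $f$ (flipping the appended estimate), while all the monotonicity bookkeeping is unchanged.
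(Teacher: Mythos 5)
Your proposal is correct and is exactly the argument the paper intends: the central inequality in each chain is Theorem~\ref{thm3} (inequality \eqref{eq2.2}), and the outer inequality follows by applying superadditivity factor-by-factor in part~(1), respectively subadditivity of $f$ at $\frac{x+y+z}{3}=\frac{x}{3}+\frac{y}{3}+\frac{z}{3}$ followed by raising to the positive power $h(3/2)$ in part~(2), with positivity of $f$ and $h$ justifying each monotone step. Your remark that the reversed version of part~(2) requires $f$ concave and \emph{superadditive} is also the logically correct reading (the paper's ``submultiplicative'' there appears to be a typo).
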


\subsection{The case when $f$ is  ${\rm{A_tH_h}}$-convex}

\begin{theorem}
	\label{thm4} Let $h: I\to \left(0,\infty\right)$ be a non-negative
	super(sub)additive
	function. If $f : I \to \left(0,\infty\right)$ is an
	${\rm{A_tH_h}}$-concave (convex), then
	\begin{align}
	&\frac{1}{{f\left( {\frac{{x + z}}{2}} \right)}} +
	\frac{1}{{f\left( {\frac{{y + z}}{2}} \right)}} +
	\frac{1}{{f\left( {\frac{{x + y}}{2}} \right)}}\nonumber
	\\
	&\le \,(\ge)\,h\left( {1/2} \right)\left[ {\frac{1}{{f\left( y
				\right)}} + \frac{1}{{f\left( x \right)}} + \frac{1}{{f\left( z
				\right)}}} \right] + \frac{{h\left( {3/2} \right)}}{{f\left(
			{{\textstyle{{x + y + z} \over 3}}} \right)}}, \label{eq2.3}
	\end{align}
	for all $x,y,z\in I$.
\end{theorem}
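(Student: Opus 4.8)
The plan is to transfer the result from the additive case (Theorem \ref{thm1}) to the present harmonic setting by passing to the reciprocal function. Set $g := 1/f$; since $f$ maps $I$ into $(0,\infty)$, so does $g$. The defining inequality \eqref{eqhAH} for ${\rm{A_tH_h}}$-concavity (the reversed form) reads
\[
f\left( {t\alpha + \left( {1 - t} \right)\beta } \right) \ge \frac{{f\left( \alpha \right)f\left( \beta  \right)}}{{h\left( 1-t \right)f\left( \alpha  \right) + h\left( { t} \right)f\left( \beta  \right)}},
\]
and upon taking reciprocals of these positive quantities (which reverses the inequality) one obtains
\[
g\left( {t\alpha + \left( {1 - t} \right)\beta } \right) \le h\left( t \right)g\left( \alpha \right) + h\left( {1-t} \right)g\left( \beta \right).
\]
This is exactly the assertion that $g$ is ${\rm{A_tA_h}}$-convex. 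Hence ${\rm{A_tH_h}}$-concavity of $f$ is equivalent to ${\rm{A_tA_h}}$-convexity of $g$, and dually ${\rm{A_tH_h}}$-convexity of $f$ is equivalent to ${\rm{A_tA_h}}$-concavity of $g$.

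With this dictionary in hand, I would simply invoke Theorem \ref{thm1} applied to $g$. In the concave case, $g$ is ${\rm{A_tA_h}}$-convex and $h$ is superadditive, so Theorem \ref{thm1} yields
\[
g\left( {\tfrac{{x + z}}{2}} \right) + g\left( {\tfrac{{y + z}}{2}} \right) + g\left( {\tfrac{{x + y}}{2}} \right) \le h\left( {3/2} \right) g\left( {\tfrac{{x + y + z}}{3}} \right) + h\left( {1/2} \right)\left[ g(x) + g(y) + g(z) \right].
\]
Rewriting $g = 1/f$ term by term reproduces \eqref{eq2.3} with the inequality $\le$. The convex case is entirely parallel: there $g$ is ${\rm{A_tA_h}}$-concave and $h$ is subadditive, so the parenthetical version of Theorem \ref{thm1} delivers the reversed inequality $\ge$.

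Alternatively, one may repeat the argument of Theorem \ref{thm1} verbatim without the change of variables: assuming $x \le y \le z$ one again produces $s,t \in \left[0,1\right]$ with $s+t = \tfrac{3}{2}$, applies the reciprocated form of \eqref{eqhAH} to each of the three midpoints, and sums. The only point requiring care, and the step I expect to be the sole potential pitfall, is bookkeeping the direction of the inequality when reciprocating: harmonic-mean concavity of $f$ becomes arithmetic-mean convexity of $1/f$, so the superadditivity hypothesis on $h$ is precisely what is needed to combine $h(s)+h(t) \le h(s+t) = h(3/2)$ and $h(1-s)+h(1-t) \le h(2-s-t) = h(1/2)$ in the correct direction. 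Matching each parenthetical case of $f$ to the matching (super/sub)additivity of $h$ then closes the argument.
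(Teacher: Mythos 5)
Your proof is correct, and it takes a genuinely more modular route than the paper's. The paper proves Theorem \ref{thm4} directly: it reruns the entire argument of Theorem \ref{thm1} (the construction of $s,t$ with $s+t=\tfrac{3}{2}$, three applications of the defining inequality, summation, and the super/subadditivity step), performing the reciprocation inline at each of the three midpoints. You instead isolate the reciprocation once, as a dictionary: taking reciprocals in the reversed form of \eqref{eqhAH} shows that $f$ is ${\rm{A_tH_h}}$-concave exactly when $g=1/f$ is ${\rm{A_tA_h}}$-convex (and dually with convex/concave exchanged), after which Theorem \ref{thm1} applied to $g$ yields \eqref{eq2.3} verbatim, with the correct pairing of superadditivity with the $\le$ case and subadditivity with the $\ge$ case. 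The two arguments have identical mathematical content --- the paper's proof is, in effect, your reduction unrolled --- but your version buys brevity, makes transparent why the hypothesis pairing in Theorem \ref{thm4} is the mirror image of that in Theorem \ref{thm1}, and avoids re-deriving estimates where sign bookkeeping can slip (indeed the paper's inline version carries a typo in the quoted form of \eqref{eqhAH}, writing $h(1-t)$ in both terms of the denominator). What the paper's self-contained repetition buys is only independence from Theorem \ref{thm1} and stylistic uniformity with the other theorems; your ``alternative'' second route is precisely the paper's proof, so nothing is missing from your account.
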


\begin{proof}
	$f$ is  ${\rm{A_tH_h}}$-convex iff the inequality
	\begin{align*}
	f\left( {t\alpha  + \left( {1 -
			t} \right)\beta } \right) \le\frac{{f\left( \alpha \right)f\left(
			\beta  \right)}}{{h\left( 1-t \right)f\left( \alpha  \right) +
			h\left( {1 - t} \right)f\left( \beta  \right)}}, \qquad  0\le t\le
	1
	\end{align*}
	holds for all $\alpha,\beta \in I$. As in the proof of Theorem
	\ref{thm1}, we have
	$\left(x+y-2z\right)\left(s+t-\frac{3}{2}\right)=0$. If
	$x+y-2z=0$, then $x = y = z$, and Popoviciu's inequality holds.
	
	If $s+t=\frac{3}{2}$, then since $f$ is ${\rm{A_tH_h}}$-convex, we
	have
	\begin{align*}
	f\left( {\frac{{x + z}}{2}} \right) = f\left[ {s\left( {\frac{{x + y + z}}{3}} \right) + \left( {1 - s} \right)z}
	\right]
	\ge
	\frac{{f\left( {{\textstyle{{x + y + z} \over 3}}} \right)f\left(
			z \right)}}{{h\left( {1 - s} \right)f\left( {{\textstyle{{x + y +
							z} \over 3}}} \right) + h\left( s \right)f\left( z \right)}},
	\end{align*}
	and this equivalent to write
	\begin{align}
	\frac{1}{{f\left( {\frac{{x + z}}{2}} \right)}} \le \frac{{h\left(
			{1 - s} \right)f\left( {{\textstyle{{x + y + z} \over 3}}} \right)
			+ h\left( s \right)f\left( z \right)}}{{f\left( {{\textstyle{{x +
							y + z} \over 3}}} \right)f\left( z \right)}}, \label{eq2.4}
	\end{align}
	similarly,
	\begin{align*}
	f\left( {\frac{{y + z}}{2}} \right) = f\left[ {t\left( {\frac{{x + y + z}}{3}} \right) + \left( {1 - t} \right)z}
	\right]
	\ge
	\frac{{f\left( {{\textstyle{{x + y + z} \over 3}}} \right)f\left(
			z \right)}}{{h\left( {1 - t} \right)f\left( {{\textstyle{{x + y +
							z} \over 3}}} \right) + h\left( t \right)f\left( z \right)}},
	\end{align*}
	which equivalent to write
	\begin{align}
	\frac{1}{{f\left( {\frac{{y + z}}{2}} \right)}} \le \frac{{h\left(
			{1 - t} \right)f\left( {{\textstyle{{x + y + z} \over 3}}} \right)
			+ h\left( t \right)f\left( z \right)}}{{f\left( {{\textstyle{{x +
							y + z} \over 3}}} \right)f\left( z \right)}}, \label{eq2.5}
	\end{align}
	and
	\begin{align}
	f\left( {\frac{{x + y}}{2}} \right) &\ge \frac{{f\left( x
			\right)f\left( y \right)}}{{h\left( {1/2} \right)\left( {f\left( x
				\right) + f\left( y \right)} \right)}}\nonumber
	\\
	\Longleftrightarrow\frac{1}{{f\left( {\frac{{x + y}}{2}} \right)}}
	&\le \frac{{h\left( {1/2} \right)\left( {f\left( x \right) +
				f\left( y \right)} \right)}}{{f\left( x \right)f\left( y
			\right)}},  \label{eq2.6}
	\end{align}
	Summing the inequalities \eqref{eq2.4}--\eqref{eq2.6}, we get
	\begin{align*}
	&\frac{1}{{f\left( {\frac{{x + z}}{2}} \right)}} + \frac{1}{{f\left( {\frac{{y + z}}{2}} \right)}} + \frac{1}{{f\left( {\frac{{x + y}}{2}} \right)}} \\
	&\le \frac{{h\left( {1 - s} \right)f\left( {{\textstyle{{x + y + z} \over 3}}} \right) + h\left( s \right)f\left( z \right)}}{{f\left( {{\textstyle{{x + y + z} \over 3}}} \right)f\left( z \right)}} + \frac{{h\left( {1 - t} \right)f\left( {{\textstyle{{x + y + z} \over 3}}} \right) + h\left( t \right)f\left( z \right)}}{{f\left( {{\textstyle{{x + y + z} \over 3}}} \right)f\left( z \right)}} \\&\qquad+ \frac{{h\left( {1/2} \right)\left( {f\left( x \right) + f\left( y \right)} \right)}}{{f\left( x \right)f\left( y \right)}} \\
	&= \frac{{\left[ {h\left( {1 - s} \right) + h\left( {1 - t} \right)} \right]f\left( {{\textstyle{{x + y + z} \over 3}}} \right) + \left[ {h\left( s \right) + h\left( t \right)} \right]f\left( z \right)}}{{f\left( {{\textstyle{{x + y + z} \over 3}}} \right)f\left( z \right)}} \\&\qquad+ \frac{{h\left( {1/2} \right)\left( {f\left( x \right) + f\left( y \right)} \right)}}{{f\left( x \right)f\left( y \right)}} \\
	&\le \frac{{h\left( {2 - s - t} \right)f\left( {{\textstyle{{x + y + z} \over 3}}} \right) + h\left( {s + t} \right)f\left( z \right)}}{{f\left( {{\textstyle{{x + y + z} \over 3}}} \right)f\left( z \right)}} + \frac{{h\left( {1/2} \right)\left( {f\left( x \right) + f\left( y \right)} \right)}}{{f\left( x \right)f\left( y \right)}} \\
	&= \frac{{h\left( {1/2} \right)f\left( {{\textstyle{{x + y + z} \over 3}}} \right) + h\left( {3/2} \right)f\left( z \right)}}{{f\left( {{\textstyle{{x + y + z} \over 3}}} \right)f\left( z \right)}} + \frac{{h\left( {1/2} \right)\left( {f\left( x \right) + f\left( y \right)} \right)}}{{f\left( x \right)f\left( y \right)}} \\
	&=h\left( {1/2} \right)\left[ {\frac{1}{{f\left( y \right)}} +
		\frac{1}{{f\left( x \right)}} + \frac{1}{{f\left( z \right)}}}
	\right] + \frac{{h\left( {3/2} \right)}}{{f\left( {{\textstyle{{x
							+ y + z} \over 3}}} \right)}}
	\end{align*}
	
\end{proof}
\begin{remark}
	In \eqref{eq2.3}, setting $z=y$, then we have
	\begin{align*}
	\frac{2}{{f\left( {\frac{{x + y}}{2}} \right)}} +
	\frac{1}{{f\left( {\frac{{y + z}}{2}} \right)}} \le
	\,(\ge)\,h\left( {1/2} \right)\left[ {\frac{2}{{f\left( y
				\right)}} + \frac{1}{{f\left( x \right)}} } \right] +
	\frac{{h\left( {3/2} \right)}}{{f\left( {{\textstyle{{x + 2y}
						\over 3}}} \right)}},
	\end{align*}
	for all $x,y\in I$.
\end{remark}

\begin{corollary}
	\label{cor10}  If $f : I \to \left(0,\infty\right)$ is an
	${\rm{A_tH_t}}$-concave (convex), then
	\begin{align*}
	\frac{2}{3}\left[{\frac{1}{{f\left( {\frac{{x + z}}{2}} \right)}}
		+ \frac{1}{{f\left( {\frac{{y + z}}{2}} \right)}} +
		\frac{1}{{f\left( {\frac{{x + y}}{2}} \right)}}}\right] \le
	\,(\ge)\,\frac{1}{3}\left[ {\frac{1}{{f\left( y \right)}} +
		\frac{1}{{f\left( x \right)}} + \frac{1}{{f\left( z \right)}}}
	\right] + \frac{{1}}{{f\left( {{\textstyle{{x + y + z} \over 3}}}
			\right)}},
	\end{align*}
	for all $x,y,z\in I$. The equality holds with
	$f\left(x\right)=\frac{1}{x}$, $x>0$.
\end{corollary}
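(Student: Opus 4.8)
The plan is to read Corollary \ref{cor10} off Theorem \ref{thm4} by specializing to the linear weight $h(t)=t$. The first point to record is that $h(t)=t$ is \emph{additive}, since $h(s+t)=s+t=h(s)+h(t)$; in particular it is simultaneously superadditive and subadditive, so both branches of Theorem \ref{thm4} apply. This is exactly what allows the corollary to assert the two inequality directions at once: with $f$ taken ${\rm{A_tH_t}}$-concave we land in the superadditive branch and obtain ``$\le$'', while with $f$ taken ${\rm{A_tH_t}}$-convex we land in the subadditive branch and obtain ``$\ge$''.

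Next I would substitute $h(t)=t$ into \eqref{eq2.3}, evaluating the only two weights that appear as $h(1/2)=1/2$ and $h(3/2)=3/2$. This turns \eqref{eq2.3} into
\begin{align*}
&\frac{1}{f\left(\frac{x+z}{2}\right)} + \frac{1}{f\left(\frac{y+z}{2}\right)} + \frac{1}{f\left(\frac{x+y}{2}\right)} \\
&\qquad\le (\ge)\ \frac{1}{2}\left[\frac{1}{f(y)} + \frac{1}{f(x)} + \frac{1}{f(z)}\right] + \frac{3}{2f\left(\frac{x+y+z}{3}\right)},
\end{align*}
for all $x,y,z\in I$. Multiplying through by the positive constant $\frac{2}{3}$ reproduces the factor $\frac{2}{3}$ in front of the left-hand sum, and on the right collapses the two coefficients to $\frac{2}{3}\cdot\frac{1}{2}=\frac{1}{3}$ and $\frac{2}{3}\cdot\frac{3}{2}=1$, which is precisely the stated inequality. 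No further estimation is involved.

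For the equality clause I would test $f(x)=1/x$. Substituting this into the defining relation \eqref{eqhAH} with $h(t)=t$ collapses both sides to $1/(t\alpha+(1-t)\beta)$, so $f(x)=1/x$ is at once ${\rm{A_tH_t}}$-convex and concave, with equality throughout; this is the borderline function for which Theorem \ref{thm4} is sharp. Since then $1/f(u)=u$, the left-hand side of the corollary equals $\frac{2}{3}\left[\frac{x+z}{2}+\frac{y+z}{2}+\frac{x+y}{2}\right]=\frac{2}{3}(x+y+z)$, while the right-hand side equals $\frac{1}{3}(x+y+z)+\frac{x+y+z}{3}=\frac{2}{3}(x+y+z)$, giving equality.

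There is essentially no hard step here: the corollary is a pure substitution-and-rescaling of Theorem \ref{thm4}. The only item demanding any attention is the observation that the additivity of $h(t)=t$ places it in \emph{both} the superadditive and subadditive hypotheses, which is what legitimizes stating the two inequality directions simultaneously; everything else is arithmetic.
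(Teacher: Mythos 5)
Your proposal is correct and is exactly the derivation the paper intends: Corollary \ref{cor10} is the specialization $h(t)=t$ of Theorem \ref{thm4}, with $h(1/2)=1/2$, $h(3/2)=3/2$, followed by multiplication by $\tfrac{2}{3}$, and your observation that additivity of $h(t)=t$ makes both the superadditive/concave and subadditive/convex branches available is the right justification for the two simultaneous directions. Your verification of the equality case $f(x)=1/x$ (which the paper merely asserts) is also correct, since both sides of \eqref{eqhAH} reduce to $1/(t\alpha+(1-t)\beta)$.
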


\begin{example}
	Let $f\left(x\right)=x^p$, $p\ge 1$. Then ${\rm{A_tH_t}}$-concave
	for $x\ge  1$. Applying Corollary \ref{cor10}, we get
	\begin{align*}
	\frac{2}{3}\left[ {\left( {\frac{{x + z}}{2}} \right)^{ - p}  +
		\left( {\frac{{y + z}}{2}} \right)^{ - p}  + \left( {\frac{{x +
					y}}{2}} \right)^{ - p} } \right] \le \frac{{x^{ - p}  + y^{ - p} +
			z^{ - p} }}{3} + \left( {\frac{{x + y + z}}{3}} \right)^{ - p}
	\end{align*}
	for all $x,y,z \ge 1$.
\end{example}

\begin{corollary}
	\label{cor11}  If $f : I \to \left(0,\infty\right)$ is an
	${\rm{A_tH_{1/t}}}$-convex, then
	\begin{align*}
	\frac{3}{2}\left[{\frac{1}{{f\left( {\frac{{x + z}}{2}} \right)}}
		+ \frac{1}{{f\left( {\frac{{y + z}}{2}} \right)}} +
		\frac{1}{{f\left( {\frac{{x + y}}{2}} \right)}}}\right] \le
	3\left[ {\frac{1}{{f\left( y \right)}} +
		\frac{1}{{f\left( x \right)}} + \frac{1}{{f\left( z \right)}}}
	\right] + \frac{{1}}{{f\left( {{\textstyle{{x + y + z} \over 3}}}
			\right)}},
	\end{align*}
	for all $x,y,z\in I$.
\end{corollary}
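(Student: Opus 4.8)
The plan is to obtain Corollary \ref{cor11} as the specialization of Theorem \ref{thm4} to the weight function $h(t)=1/t$. The first step is to reconcile the terminology. By the convention fixed in Section 2, an ${\rm{A_tH_{1/t}}}$-convex function is one for which $-f\in Q\left(I;{\rm{A_t}},{\rm{H_{1/t}}}\right)$, that is $f\left(tx+(1-t)y\right)\ge \frac{t(1-t)f(x)f(y)}{tf(x)+(1-t)f(y)}$ for all $x,y\in I$ and $t\in(0,1)$. Substituting $h(t)=1/t$ into the defining relation \eqref{eqhAH} and reversing the inequality shows that this is exactly the ${\rm{A_tH_h}}$-concave condition with $h(t)=1/t$; hence Corollary \ref{cor11} should be read off from the concave branch of Theorem \ref{thm4}, whose conclusion is recorded in \eqref{eq2.3}.

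Once this identification is made, the computation is immediate. With $h(t)=1/t$ one has $h(1/2)=2$ and $h(3/2)=2/3$, so the right-hand side of \eqref{eq2.3} becomes $2\left[\frac{1}{f(x)}+\frac{1}{f(y)}+\frac{1}{f(z)}\right]+\frac{2/3}{f\left(\frac{x+y+z}{3}\right)}$, while the left-hand side is the sum of the reciprocals over the three midpoints $\frac{x+z}{2},\frac{y+z}{2},\frac{x+y}{2}$. Multiplying the resulting inequality through by $\frac{3}{2}$ normalizes the coefficient of the central term $1/f\left(\frac{x+y+z}{3}\right)$ to $1$ and turns the vertex coefficient $2$ into $3$, which is precisely the asserted inequality. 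No new inequality is introduced at this stage; the corollary is a rescaled reading of the theorem.

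The step that requires the most care is the verification that $h(t)=1/t=t^{-1}$ meets the additivity hypothesis invoked in Theorem \ref{thm4}. According to the classification of $h(x)=x^{k}$ recalled earlier, the exponent $k=-1$ lies in the subadditive range $(-\infty,-1]\cup[0,1)$, so $1/t$ is subadditive. I therefore expect the genuine obstacle to be bookkeeping rather than analysis: inside the proof of Theorem \ref{thm4} the collapse $h(s)+h(t)\to h(s+t)$ and $h(1-s)+h(1-t)\to h(2-s-t)$ (with $s+t=\tfrac{3}{2}$) must be applied in the subadditive sense, and one must track how this interacts with the reversal of sense that occurs when the concave estimates are inverted to reciprocal inequalities. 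Checking that the constants $h(3/2)=2/3$ and $h(1/2)=2$ emerge with the correct orientation under this double sign-change is the crux; the remaining specialization and the final multiplication by $\tfrac{3}{2}$ are then routine.
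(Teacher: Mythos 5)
Your reduction of Corollary~\ref{cor11} to Theorem~\ref{thm4} is exactly the route the paper takes (the paper gives no separate proof: it simply reads \eqref{eq2.3} with $h(t)=1/t$, so $h(1/2)=2$ and $h(3/2)=2/3$, and multiplies by $\frac32$), and your identification of the redefined ${\rm{A_tH_{1/t}}}$-convexity with the ${\rm{A_tH_h}}$-\emph{concave} condition for $h(t)=1/t$ is also correct. The gap is the step you yourself flag as the crux and then defer: $h(t)=1/t$ is subadditive, while the $\le$ branch of Theorem~\ref{thm4} that you need requires superadditivity, and no ``double sign-change'' rescues this. In the proof of Theorem~\ref{thm4} the concave pointwise bounds are first inverted, giving $\frac{1}{f\left(\frac{x+z}{2}\right)}\le\frac{h(1-s)}{f(z)}+\frac{h(s)}{f\left(\frac{x+y+z}{3}\right)}$ and its analogues, and the additivity property is then applied \emph{to these upper bounds}; so what is needed is precisely $h(s)+h(t)\le h(s+t)$ and $h(1-s)+h(1-t)\le h(2-s-t)$, i.e.\ superadditivity, after the inversion has already taken place. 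For $h(t)=1/t$ and $s+t=\frac32$ with $s,t\in\left[\frac12,1\right]$ these fail badly: $\frac1s+\frac1t\ge\frac83>\frac23=h(3/2)$ and $\frac{1}{1-s}+\frac{1}{1-t}\ge 8>2=h(1/2)$, the latter even being unbounded as $s\to1$. So the collapse you hope will ``emerge with the correct orientation'' in fact goes the wrong way, by an arbitrarily large margin.

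Moreover, this is not a repairable bookkeeping defect: the statement itself is false under the definition both you and the paper are using. Take $I=[1,4]$ and let $f=\frac14$ at the three points $\frac32,\frac52,3$ and $f=1$ elsewhere (smooth it if continuity is desired). For any $a,b\in I$ and $t\in(0,1)$ one has $\frac{t(1-t)f(a)f(b)}{tf(a)+(1-t)f(b)}\le t(1-t)\max\{f(a),f(b)\}\le\frac14\le f(w)$ for every $w\in I$, so this $f$ is ${\rm{A_tH_{1/t}}}$-convex in the redefined sense. But for $x=1$, $y=2$, $z=4$ the three midpoints are $\frac52$, $3$, $\frac32$ and $\frac{x+y+z}{3}=\frac73$, so the left-hand side of Corollary~\ref{cor11} equals $\frac32(4+4+4)=18$ while the right-hand side equals $3(1+1+1)+1=10$. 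Hence no argument can close the gap you left open; the corollary would need a genuinely different hypothesis (for instance a superadditive $h$, as Theorem~\ref{thm4} actually demands) or a different conclusion.
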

\begin{example}
	Let $f\left(x\right)= - \log \left(x\right)$, $x\gneqq 1$. Then,
	$f$ is  ${\rm{A_tH_{1/t}}}$-convex for  $x\gneqq 1$.
	Applying Corollary \ref{cor11}, we get
	\begin{align*}
	\frac{3}{2}\left[ {\frac{1}{{\log \left( {\frac{{x + z}}{2}}
				\right)}} + \frac{1}{{\log \left( {\frac{{y + z}}{2}} \right)}} +
		\frac{1}{{\log \left( {\frac{{x + y}}{2}} \right)}}} \right] \le
	3\left( {\frac{1}{{\log x}} + \frac{1}{{\log y}} + \frac{1}{{\log
				z}}} \right) + \log \left( {xyz} \right)^{\frac{1}{3}},
	\end{align*}
	for all $x,y,z\gneqq 1$.
\end{example}

\begin{corollary}
\label{cor12}	If $f : I \to \left(0,\infty\right)$ is an
	${\rm{A_tH_1}}$-convex, then
	\begin{align*}
	\frac{1}{{f\left( {\frac{{x + z}}{2}} \right)}} +
	\frac{1}{{f\left( {\frac{{y + z}}{2}} \right)}} +
	\frac{1}{{f\left( {\frac{{x + y}}{2}} \right)}} \le \left[
	{\frac{1}{{f\left( y \right)}} + \frac{1}{{f\left( x \right)}} +
		\frac{1}{{f\left( z \right)}}} \right] + \frac{{1}}{{f\left(
			{{\textstyle{{x + y + z} \over 3}}} \right)}},
	\end{align*}
	for all $x,y,z\in I$.
\end{corollary}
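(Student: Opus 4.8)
The plan is to obtain Corollary~\ref{cor12} as the specialization of Theorem~\ref{thm4} to the constant weight $h\equiv 1$. First I would recall that the generalized harmonic mean entering the ${\rm{A_tH_h}}$ class is $H_h(u,v)=\dfrac{uv}{h(t)u+h(1-t)v}$, so that choosing $h(t)\equiv 1$ collapses it to $H_1(u,v)=\dfrac{uv}{u+v}$. Consequently the hypothesis that $f$ is ${\rm{A_tH_1}}$-convex is precisely the $h\equiv1$ instance of the hypothesis of Theorem~\ref{thm4}, and no new structural assumption on $f$ is introduced; the geometric decomposition $(x+y-2z)(s+t-\tfrac32)=0$ from the proof of Theorem~\ref{thm1}, together with $s+t=\tfrac32$, carries over verbatim.

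Next I would read off the two numerical constants that appear in \eqref{eq2.3}: for $h\equiv 1$ one has $h(1/2)=1$ and $h(3/2)=1$. Substituting these into the conclusion of Theorem~\ref{thm4} turns the coefficient in front of the bracketed sum of reciprocals into $1$, and the coefficient in front of the centroid term into $1$ as well, so that \eqref{eq2.3} becomes
\[
\frac{1}{f\!\left(\frac{x+z}{2}\right)}+\frac{1}{f\!\left(\frac{y+z}{2}\right)}+\frac{1}{f\!\left(\frac{x+y}{2}\right)}\le \frac{1}{f(y)}+\frac{1}{f(x)}+\frac{1}{f(z)}+\frac{1}{f\!\left(\frac{x+y+z}{3}\right)},
\]
which is exactly the asserted inequality. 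This reading-off is the routine part of the argument.

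The step that requires genuine care, and which I regard as the main obstacle, is confirming that the constant weight $h\equiv 1$ legitimately sits in the branch of Theorem~\ref{thm4} that produces the ``$\le$'' direction. In the proof of that theorem the two interior evaluations $h(1-s)+h(1-t)$ and $h(s)+h(t)$ are merged, through the (sub/super)additivity hypothesis, into the single values $h(2-s-t)=h(1/2)$ and $h(s+t)=h(3/2)$ at the crucial passage leading from \eqref{eq2.4}--\eqref{eq2.6} to the collapsed bound. For $h\equiv 1$ this merging replaces a sum of two equal terms by one term, so one must verify that the additivity inequality is applied in the orientation compatible with the claimed direction of \eqref{eq2.3}; once this orientation is pinned down for the constant weight, the substitution of $h(1/2)=h(3/2)=1$ displayed above completes the proof.
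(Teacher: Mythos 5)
Your strategy is the one the paper itself intends (Corollary \ref{cor12} is presented as the $h\equiv 1$ instance of Theorem \ref{thm4}, with $h(1/2)=h(3/2)=1$), but the step you defer at the end --- ``pinning down the orientation'' of the additivity hypothesis for the constant weight --- is exactly where the argument fails, and it cannot be repaired. The ``$\le$'' branch of Theorem \ref{thm4} requires $h$ to be \emph{superadditive}: after summing \eqref{eq2.4}--\eqref{eq2.6}, the coefficients are merged via $h(s)+h(t)\le h(s+t)=h(3/2)$ and $h(1-s)+h(1-t)\le h(2-s-t)=h(1/2)$, and these must hold in that orientation because they multiply the positive quantities $f(z)$ and $f\left(\frac{x+y+z}{3}\right)$. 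The constant weight $h\equiv 1$ is subadditive, not superadditive; for it the merging inequalities read $2\le 1$. What summing \eqref{eq2.4}--\eqref{eq2.6} with $h\equiv1$ actually yields is only the weaker estimate
\[
\frac{1}{f\left(\frac{x+z}{2}\right)}+\frac{1}{f\left(\frac{y+z}{2}\right)}+\frac{1}{f\left(\frac{x+y}{2}\right)}
\le \frac{1}{f\left(x\right)}+\frac{1}{f\left(y\right)}+\frac{2}{f\left(z\right)}+\frac{2}{f\left(\frac{x+y+z}{3}\right)},
\]
which exceeds the claimed right-hand side by $\frac{1}{f(z)}+\frac{1}{f\left(\frac{x+y+z}{3}\right)}$.

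Moreover, this is not a gap that sharper bookkeeping could close, because the corollary itself is defective. Under the paper's extended meaning of ${\rm{A_tH_1}}$-convexity, namely $f\left(t\alpha+(1-t)\beta\right)\ge \frac{f(\alpha)f(\beta)}{f(\alpha)+f(\beta)}$, set $g=1/f$: the hypothesis becomes exactly $g\left(t\alpha+(1-t)\beta\right)\le g(\alpha)+g(\beta)$, and the conclusion becomes $g\left(\frac{x+z}{2}\right)+g\left(\frac{y+z}{2}\right)+g\left(\frac{x+y}{2}\right)\le g(x)+g(y)+g(z)+g\left(\frac{x+y+z}{3}\right)$. Take $x=1$, $y=\frac{6}{5}$, $z=3$, so that the three endpoints, the three midpoints $\frac{11}{10},\,2,\,\frac{21}{10}$, and the centroid $\frac{26}{15}$ are seven distinct points of $I=[1,3]$, and let $g$ equal $2$ at the three midpoints and $1$ elsewhere. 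Then $f=1/g$ is positive, the hypothesis holds (every value of $g$ is at most the sum of any two values of $g$), yet the conclusion reads $6\le 4$. Under the unextended Definition \ref{def5}, on the other hand, no positive ${\rm{A_tH_1}}$-convex function exists at all (this is the paper's own Remark \ref{cor}), so the statement would be vacuous. Either way, the specialization $h\equiv1$ of Theorem \ref{thm4} is illegitimate; your proposal inherits this defect from the paper rather than introduces it, but the step you flagged as the ``main obstacle'' is a genuine, unfillable gap.
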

\begin{example}
	Let $f\left(x\right)=-\log \left(x\right)$,  $x\gneqq 1$. Then,
	$f$ is  ${\rm{A_tH_1}}$-convex on  $x\gneqq 1$. Applying
	Corollary \ref{cor12}, we get
	\begin{align*}
	\frac{1}{{\log \left( {\frac{{x + z}}{2}}
			\right)}} + \frac{1}{{\log \left( {\frac{{y + z}}{2}} \right)}} +
	\frac{1}{{\log \left( {\frac{{x + y}}{2}} \right)}} \le
	\frac{1}{{\log x}} + \frac{1}{{\log y}} + \frac{1}{{\log z}} +
	\log \left( {xyz} \right)^{\frac{1}{3}},
	\end{align*}
	for all  $x,y,z\gneqq 1$.
\end{example}

\section{Popoviciu  inequalities for $h$-${\rm{GN}}$-convex
	functions}

\subsection{The case when $f$ is  ${\rm{G_tA_h}}$-convex}

\begin{theorem}
	\label{thm5} Let $h: I\to \left(0,\infty\right)$ be a non-negative
	super(sub)additive
	function. If $f : I \to \left(0,\infty\right)$ is
	${\rm{G_tA_h}}$-convex function, then
	\begin{align}
	f\left( {\sqrt{xz}} \right)+f\left( {\sqrt{yz}} \right)+f\left(
	{\sqrt{xy}} \right) \le \,(\ge)\, h\left( {3/2} \right) f\left(
	{\sqrt[3]{xyz}} \right) +h\left(1/2 \right)\left[ {f\left( x
		\right)+f\left( y \right)+f\left( z \right)} \right],
	\label{eq3.1}
	\end{align}
	for all $x,y,z\in I$.
\end{theorem}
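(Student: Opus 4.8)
The plan is to mirror the argument used for Theorem \ref{thm1}, exploiting the fact that a geometric mean becomes an arithmetic mean after passing to logarithms. Concretely, I would set $a=\log x$, $b=\log y$, $c=\log z$, so that the three ``edge'' arguments $\sqrt{xz}$, $\sqrt{yz}$, $\sqrt{xy}$ become $e^{(a+c)/2}$, $e^{(b+c)/2}$, $e^{(a+b)/2}$, while the ``center'' argument $\sqrt[3]{xyz}$ becomes $e^{(a+b+c)/3}$. In this way the multiplicative relations among the arguments of $f$ reduce \emph{exactly} to the additive relations that drove the proof of Theorem \ref{thm1}, and the defining inequality \eqref{eqhGA} plays the role that the $A_tA_h$ inequality played there.

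First I would assume without loss of generality that $x\le y\le z$ and treat the case $y\le\sqrt[3]{xyz}$ (equivalently $b\le\frac{a+b+c}{3}$). Under this hypothesis one verifies the chains $\sqrt[3]{xyz}\le\sqrt{xz}\le z$ and $\sqrt[3]{xyz}\le\sqrt{yz}\le z$, which in logarithmic form are the elementary inequalities $\frac{a+b+c}{3}\le\frac{a+c}{2}\le c$ and $\frac{a+b+c}{3}\le\frac{b+c}{2}\le c$. These guarantee the existence of $s,t\in[0,1]$ with
\begin{align*}
\sqrt{xz}=\left(\sqrt[3]{xyz}\right)^{s}z^{1-s},\qquad \sqrt{yz}=\left(\sqrt[3]{xyz}\right)^{t}z^{1-t}.
\end{align*}
Taking logarithms and summing the two identities yields the key relation $(a+b-2c)\left(s+t-\tfrac{3}{2}\right)=0$, i.e. $(\log x+\log y-2\log z)\left(s+t-\tfrac{3}{2}\right)=0$. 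If the first factor vanishes then $x=y=z$ and the inequality is trivial; otherwise $s+t=\tfrac{3}{2}$.

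Next I would apply \eqref{eqhGA} to each edge term. With base points $\sqrt[3]{xyz}$ and $z$ and exponents $s$, $t$ respectively one gets $f\!\left(\sqrt{xz}\right)\le h(s)f\!\left(\sqrt[3]{xyz}\right)+h(1-s)f(z)$ and $f\!\left(\sqrt{yz}\right)\le h(t)f\!\left(\sqrt[3]{xyz}\right)+h(1-t)f(z)$, while for $\sqrt{xy}=x^{1/2}y^{1/2}$ the exponent $1/2$ gives $f\!\left(\sqrt{xy}\right)\le h(1/2)\left[f(x)+f(y)\right]$. Adding the three bounds and collecting coefficients produces $\left[h(s)+h(t)\right]f\!\left(\sqrt[3]{xyz}\right)+\left[h(1-s)+h(1-t)\right]f(z)+h(1/2)\left[f(x)+f(y)\right]$.

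The final step, and the only place where the hypothesis on $h$ enters, is superadditivity: $h(s)+h(t)\le h(s+t)=h(3/2)$ and $h(1-s)+h(1-t)\le h(2-s-t)=h(1/2)$, using $s+t=\tfrac{3}{2}$. This collapses the bound to $h(3/2)f\!\left(\sqrt[3]{xyz}\right)+h(1/2)\left[f(x)+f(y)+f(z)\right]$, which is \eqref{eq3.1}. I do not anticipate a deep obstacle, since the whole argument is the logarithmic transcription of Theorem \ref{thm1}; the step requiring the most care is checking that the geometric means lie in the interval between $\sqrt[3]{xyz}$ and $z$, which secures $s,t\in[0,1]$ via monotonicity of the geometric mean, together with the correct exponent bookkeeping when invoking \eqref{eqhGA}. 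The subadditive, $h$-${\rm{G_tA_h}}$-concave variant follows by reversing every inequality throughout.
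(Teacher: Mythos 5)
Your proposal is correct and follows essentially the same route as the paper's own proof: the same case reduction $x\le y\le z$, $y\le\sqrt[3]{xyz}$, the same representation $\sqrt{xz}=\left(\sqrt[3]{xyz}\right)^{s}z^{1-s}$, $\sqrt{yz}=\left(\sqrt[3]{xyz}\right)^{t}z^{1-t}$, the dichotomy $x=y=z$ or $s+t=\tfrac{3}{2}$, and the same application of \eqref{eqhGA} followed by superadditivity of $h$. Your logarithmic substitution is only a cosmetic repackaging of the paper's step of multiplying the two identities, so the two arguments are the same proof in different coordinates.
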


\begin{proof}
	$f$ is  ${\rm{G_tA_h}}$-convex iff the inequality
	\begin{align*}
	f\left( {\alpha ^t \beta ^{1 - t} } \right) \le h\left( {t}
	\right)f\left( \alpha \right) + h\left( {1 - t} \right)f\left(
	\beta  \right), \qquad 0\le t\le 1
	\end{align*}
	holds for all $\alpha,\beta \in I$. Assume that $x\le y \le z$. If
	$y \le \left( {xyz} \right)^{1/3}$, then
	\begin{align*}
	\left( {xyz} \right)^{1/3}  \le \left( {xz} \right)^{1/2}  \le z
	\,\,\text{and}\,\,  \left( {xyz} \right)^{1/3}  \le \left( {yz}
	\right)^{1/2}  \le z,
	\end{align*}
	so that there exist two numbers $s,t \in \left[0,1\right]$
	satisfying
	\begin{align*}
	\left( {xz} \right)^{1/2}  = \left( {xyz} \right)^{s/3} z^{1 - s}
	\end{align*}
	and
	\begin{align*}
	\left( {yz} \right)^{1/2}  = \left( {xyz} \right)^{t/3} z^{1 - t}
	\end{align*}
	Multiplying the above equations, we get $$\left( {xyz}
	\right)^{1/2} z^{1/2}  = \left( {xyz} \right)^{\left( {s + t}
		\right)/3} z^{2 - \left( {s + t} \right)} $$ or $$\left( {xyz}
	\right)^{\frac{{\left( {s + t} \right)}}{3} - \frac{1}{2}} z^{2 -
		\left( {s + t} \right) - \frac{1}{2}}  = 1.$$ If $xyz^2=1$, then
	$x = y = z$, and Popoviciu's inequality holds.
	
	If $s+t=\frac{3}{2}$, then since $f$ is ${\rm{G_tA_h}}$-convex, we
	have
	\begin{align*}
	f\left( {\sqrt {xz} } \right) &= f\left[ {\left( {xyz} \right)^{s/3} z^{1 - s} } \right] \le h\left( s \right)\left[ {f\left( {\sqrt[3]{xyz}  } \right)} \right] + h\left( {1 - s} \right)\left[ {f\left( z \right)} \right] \\
	f\left( {\sqrt {yz} } \right) &= f\left[ {\left( {xyz} \right)^{t/3} z^{1 - t} } \right] \le h\left( t \right)\left[ {f\left( {\sqrt[3]{xyz}} \right)} \right] + h\left( {1 - t} \right)\left[ {f\left( z \right)} \right] \\
	f\left( {\sqrt {xy} } \right) &\le h\left( {\frac{1}{2}} \right)\left[ {f\left( x \right) + f\left( y \right)} \right]
	\end{align*}
	Summing up these inequalities, we get
	\begin{align*}
	&f\left( {\sqrt{xz}} \right)+f\left( {\sqrt{yz}} \right)+f\left( {\sqrt{xy}} \right) \\
	&\le  h\left( s \right)f\left( {\sqrt[3]{xyz}} \right)+h\left( {1 - s} \right)f\left( z
	\right) +h\left( t \right)f\left( {\sqrt[3]{xyz}} \right)+h\left( {1 - t} \right)f\left( z \right)\\
	&\qquad+ h\left( {1/2}\right)\left[ {f\left( x \right)+f\left( y \right)}
	\right]\\
	&= \left[ {h\left( s \right) + h\left( t \right)} \right]f\left( {\sqrt[3]{xyz}} \right)+ \left[ {h\left( {1 - s} \right) + h\left( {1 - t} \right)}  \right]f\left( z \right)+  h\left( {1/2}\right)\left[ {f\left( x \right)+f\left( y \right)}
	\right] \\
	&\le h\left( {s + t} \right) f\left( {\sqrt[3]{xyz}} \right) +h\left( {2 - s - t} \right)  f\left( z \right)+h\left(1/2 \right)\left[ {f\left( x \right)+f\left( y \right)} \right]   \\
	&= h\left( {3/2} \right) f\left( {\sqrt[3]{xyz}} \right) +h\left( {1/2} \right)  f\left( z \right)+h\left(1/2 \right)\left[ {f\left( x \right)+f\left( y \right)} \right] \\
	&= h\left( {3/2} \right) f\left( {\sqrt[3]{xyz}} \right)    +h\left(1/2 \right)\left[ {f\left( x \right)+f\left( y \right)+f\left( z \right)}
	\right],
	\end{align*}
	which proves the inequality \eqref{eq3.1}.
\end{proof}

\begin{remark}
Setting $z=y$ 	in \eqref{eq3.1}, we get
	\begin{align*}
	2f\left( {\sqrt{xy}} \right)+f\left( {y} \right)\le \,(\ge)\,
	h\left( {3/2} \right) f\left( {\sqrt[3]{xy^2}} \right) +h\left(1/2
	\right)\left[ {f\left( x \right)+2f\left( y \right)} \right],
	\end{align*}
	for all $x,y\in I$.
\end{remark}

\begin{corollary}
	\label{cor13} If $f : I \to \left(0,\infty\right)$ is
	${\rm{G_tA_t}}$-convex function, then
	\begin{align*}
	\frac{2}{3}\left[ {f\left( {\sqrt {xz} } \right) + f\left( {\sqrt
			{yz} } \right) + f\left( {\sqrt {xy} } \right)} \right] \le
	f\left( {\sqrt[3]{{xyz}}} \right) + \frac{{f\left( x \right) +
			f\left( y \right) + f\left( z \right)}}{3},
	\end{align*}
	for all $x,y,z\in I$. The equality holds with
	$f\left(x\right)=\log\left(x\right)$, $x>1$.
\end{corollary}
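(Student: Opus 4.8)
The plan is to obtain this corollary as an immediate specialization of Theorem \ref{thm5}, choosing the weight function to be the identity $h(t)=t$. First I would note that $h(t)=t$ is additive, since $h(s+t)=s+t=h(s)+h(t)$, and hence simultaneously superadditive and subadditive; thus the hypothesis of Theorem \ref{thm5} is satisfied. With this choice, the defining inequality \eqref{eqhGA} for ${\rm{G_tA_h}}$-convexity reduces exactly to the condition characterizing ${\rm{G_tA_t}}$-convexity, so the assumption imposed on $f$ is precisely the one stated in the corollary.

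Next I would substitute the two values $h(3/2)=3/2$ and $h(1/2)=1/2$ into the conclusion \eqref{eq3.1} of Theorem \ref{thm5}. This yields
\begin{align*}
f\left( {\sqrt{xz}} \right)+f\left( {\sqrt{yz}} \right)+f\left( {\sqrt{xy}} \right) \le \frac{3}{2}\, f\left( {\sqrt[3]{xyz}} \right) +\frac{1}{2}\left[ {f\left( x \right)+f\left( y \right)+f\left( z \right)} \right],
\end{align*}
for all $x,y,z\in I$. Multiplying both sides by $\frac{2}{3}$ produces exactly the stated inequality, while the reversed ($\ge$) version follows in the same way from the concave branch of the theorem.

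Finally, for the equality claim I would take $f(x)=\log x$ on $x>1$, which maps into $(0,\infty)$. Since $\log\left(\alpha^t\beta^{1-t}\right)=t\log\alpha+(1-t)\log\beta$, the logarithm meets \eqref{eqhGA} with equality for every $t\in[0,1]$, so it is at once ${\rm{G_tA_t}}$-convex and concave. Using $\log\sqrt{uv}=\tfrac{1}{2}\left(\log u+\log v\right)$ together with $\log\sqrt[3]{xyz}=\tfrac{1}{3}\left(\log x+\log y+\log z\right)$, a short computation shows that each side of the corollary equals $\tfrac{2}{3}\left(\log x+\log y+\log z\right)$, confirming the asserted equality.

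Because the result is a pure specialization, there is essentially no obstacle to surmount: the only points demanding any care are the elementary observation that $h(t)=t$ is additive (so the super/subadditivity hypothesis of Theorem \ref{thm5} is met, in fact with equality), and the routine logarithmic bookkeeping verifying the equality case.
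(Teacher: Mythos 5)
Your proposal is correct and follows exactly the route the paper intends: Corollary \ref{cor13} is the immediate specialization of Theorem \ref{thm5} to $h(t)=t$, which is additive and hence satisfies the super(sub)additivity hypothesis, giving $h(3/2)=3/2$, $h(1/2)=1/2$, after which multiplication by $\tfrac{2}{3}$ yields the stated inequality. Your verification of the equality case $f(x)=\log x$, $x>1$, via $\log\left(\alpha^t\beta^{1-t}\right)=t\log\alpha+(1-t)\log\beta$ is likewise the intended (and correct) computation, with both sides equal to $\tfrac{2}{3}\left(\log x+\log y+\log z\right)$.
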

\begin{example}
	Let $f\left(x\right)=\cosh\left(x\right)$, $x>0$. Then, $f$ is
	${\rm{G_tA_t}}$-convex on $(0,\infty)$. Applying Corollary
	\ref{cor13} we get
	\begin{align*}
	\frac{2}{3}\left[ {\cosh \left( {\sqrt {xz} } \right) + \cosh
		\left( {\sqrt {yz} } \right) + \cosh \left( {\sqrt {xy} } \right)}
	\right] \le \cosh \left( {\sqrt[3]{{xyz}}} \right) + \frac{{\cosh
			\left( x \right) + \cosh \left( y \right) + \cosh \left( z
			\right)}}{3},
	\end{align*}
	for all $x,y,z > 0$.
\end{example}

\begin{corollary}
	\label{cor14}If $f : I \to \left(0,\infty\right)$ is
	${\rm{G_tA_{1/t}}}$-concave function, then
	\begin{align*}
	\frac{3}{2}\left[{f\left( {\sqrt{xz}} \right)+f\left( {\sqrt{yz}}
		\right)+f\left( {\sqrt{xy}} \right)}\right] \ge  f\left(
	{\sqrt[3]{xyz}} \right) +3 \left(f\left( x \right)+f\left( y
	\right)+f\left( z \right)\right)
	\end{align*}
	for all $x,y,z\in I$.
\end{corollary}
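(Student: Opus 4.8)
The plan is to read Corollary \ref{cor14} off from Theorem \ref{thm5} by the single choice $h(t)=1/t$, so that essentially no new computation is required. First I would record that ``$\mathrm{G_tA_{1/t}}$-concave'' is exactly the concave (reversed) instance of \eqref{eqhGA} with $h(t)=1/t$, namely
\begin{align*}
f\left(\alpha^{t}\beta^{1-t}\right)\ \ge\ \tfrac{1}{t}\,f(\alpha)+\tfrac{1}{1-t}\,f(\beta),\qquad 0<t<1,
\end{align*}
for all $\alpha,\beta\in I$, this being the form compatible with the pairing of $f$-values and $h$-arguments used in the template \eqref{eqhGA}. Next I would check that $h(t)=t^{-1}$ is subadditive on $(0,\infty)$ --- this is the case $k=-1$ in the classification given above, and it is immediate from $\tfrac{1}{u}+\tfrac{1}{v}=\tfrac{u+v}{uv}\ge\tfrac{1}{u+v}$. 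Hence the hypotheses of Theorem \ref{thm5} are met in its ``subadditive / concave'' branch, which delivers the reversed ($\ge$) form of \eqref{eq3.1}.

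With this in hand the result is obtained by mere substitution. Evaluating the weights $h(3/2)=2/3$ and $h(1/2)=2$, the $\ge$ branch of \eqref{eq3.1} reads
\begin{align*}
f\left(\sqrt{xz}\right)+f\left(\sqrt{yz}\right)+f\left(\sqrt{xy}\right)\ \ge\ \tfrac{2}{3}\,f\left(\sqrt[3]{xyz}\right)+2\left[f(x)+f(y)+f(z)\right].
\end{align*}
Multiplying both sides by $3/2$ and using $\tfrac{3}{2}\cdot\tfrac{2}{3}=1$ and $\tfrac{3}{2}\cdot 2=3$ normalizes the coefficient of $f(\sqrt[3]{xyz})$ to one and produces precisely the asserted inequality. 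This is the exact analogue, in the geometric--arithmetic setting, of the passage from Theorem \ref{thm1} to Corollary \ref{cor2}.

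The step I expect to require the most care is the behaviour of $h(r)=1/r$ at the endpoints of the auxiliary parameters. In the proof of Theorem \ref{thm5} one has $s+t=3/2$ with $s,t\in[0,1]$, forcing $s,t\in[1/2,1]$ and hence $1-s,\,1-t\in[0,1/2]$; the value $1-s$ (or $1-t$) vanishes exactly on the thin locus where $\sqrt{xz}=\sqrt[3]{xyz}$ (equivalently $y^{2}=xz$), and there $1/(1-s)$ is undefined. I would dispose of this either by treating that degenerate alignment directly --- the corresponding geometric mean then coincides with $\sqrt[3]{xyz}$, so the offending term is a plain equality needing no mean decomposition --- or by a limiting argument, the remaining estimates being uniform away from the locus. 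Alongside this, one must track the direction of each inequality through the subadditivity step so that the constants $2/3$ and $2$ attach to $f(\sqrt[3]{xyz})$ and to the sum respectively; once that bookkeeping is fixed, the substitution itself is entirely mechanical.
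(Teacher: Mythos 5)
Your proposal is correct and follows exactly the paper's intended route: Corollary \ref{cor14} is read off from Theorem \ref{thm5} in its subadditive/concave branch with $h(t)=1/t$ (subadditive since $\tfrac{1}{u}+\tfrac{1}{v}\ge\tfrac{1}{u+v}$), followed by evaluating $h(3/2)=2/3$, $h(1/2)=2$ and multiplying through by $3/2$. Your additional treatment of the degenerate locus $y^{2}=xz$, where $s=1$ and $h(1-s)$ would be undefined, is a point of rigor the paper silently skips, but it does not alter the approach.
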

\begin{example}
	Let $f\left(x\right)=-x^2$, $x>0$. Then, $f$ is
	${\rm{G_tA_{1/t}}}$-concave on $(0,\infty)$. Applying
	Corollary \ref{cor14} we get
	\begin{align*}
	\frac{3}{2}\left({xz + yz+xy } \right) \le  \left(
	{\sqrt[3]{{xyz}}} \right)^2 +3 \left(x^2+y^2+z^2\right)
	\end{align*}
	for all $x,y,z > 0$.
\end{example}

\begin{corollary}
	\label{cor15}If $f : I \to \left(0,\infty\right)$ is
	${\rm{G_tA_1}}$-concave function, then
	\begin{align*}
	f\left( {\sqrt{xz}} \right)+f\left( {\sqrt{yz}} \right)+f\left(
	{\sqrt{xy}} \right)\ge     f\left( {\sqrt[3]{xyz}} \right) +
	f\left( x \right)+f\left( y \right)+f\left( z \right),
	\end{align*}
	for all $x,y,z\in I$.
\end{corollary}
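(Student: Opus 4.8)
The plan is to obtain this as the special case $h\equiv 1$ of Theorem \ref{thm5}. First I would observe that the hypothesis ``$f$ is ${\rm{G_tA_1}}$-concave'' unpacks, via the definition recorded earlier in this section, to the statement that $f\left(\alpha^t\beta^{1-t}\right)\ge f(\alpha)+f(\beta)$ for all $\alpha,\beta\in I$ and $t\in(0,1)$. This is precisely the reverse of the defining inequality \eqref{eqhGA} for a ${\rm{G_tA_h}}$-convex function once the weight is taken to be the constant function $h\equiv 1$, since then $h(t)=h(1-t)=1$. Hence $f$ is ${\rm{G_tA_h}}$-concave in the sense of Theorem \ref{thm5} with $h\equiv 1$.

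Next I would verify that $h\equiv 1$ meets the hypothesis required for the concave $(\ge)$ branch of Theorem \ref{thm5}, namely subadditivity. This is immediate, since $h(s+t)=1\le 2=h(s)+h(t)$ for all admissible $s,t$; equivalently, in the classification $h(x)=x^k$ recalled earlier, the exponent $k=0$ lies in the subadditive range $[0,1)$. The constant function is also non-negative and strictly positive, so every structural requirement of Theorem \ref{thm5} is satisfied.

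With these two observations the conclusion follows at once. Applying the $(\ge)$ version of Theorem \ref{thm5} to $f$ with $h\equiv 1$, and evaluating the weights at $h(3/2)=1$ and $h(1/2)=1$, the right-hand side of \eqref{eq3.1} collapses to $f\left(\sqrt[3]{xyz}\right)+f(x)+f(y)+f(z)$, which is exactly the asserted inequality. I anticipate no genuine obstacle: the substantive work---the parametrization choosing $s,t\in[0,1]$ with $s+t=3/2$, together with the separate degenerate case $x=y=z$---was already discharged once in the proof of Theorem \ref{thm5}, so this corollary is merely its specialization at the constant weight, exactly parallel to Corollaries \ref{cor13} and \ref{cor14}, which specialize instead to $h(t)=t$ and $h(t)=1/t$.
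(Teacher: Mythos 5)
Your route is exactly the one the paper intends: Corollary \ref{cor15} is offered, without separate proof, as the specialization of Theorem \ref{thm5} at the constant weight $h\equiv 1$, parallel to the specializations at $h(t)=t$ and $h(t)=1/t$. But there is a genuine gap at the step ``hence $f$ is ${\rm{G_tA_h}}$-concave in the sense of Theorem \ref{thm5} with $h\equiv 1$.'' Theorem \ref{thm5} assumes $f:I\to(0,\infty)$, whereas the hypothesis you correctly unpacked, $f(\alpha^{t}\beta^{1-t})\ge f(\alpha)+f(\beta)$ for all $t\in(0,1)$, forces $f\le 0$: take $\alpha=\beta$ to get $f(\alpha)\ge 2f(\alpha)$. (This is precisely the paper's Remark \ref{cor}, which is why these classes were re-defined for $f:I\to\mathbb{R}$ in the first place.) So either you read the corollary literally, with $f$ positive, in which case its hypothesis is unsatisfiable and you have proved a vacuous statement; or you read it as the paper's stated convention and its worked example $f(x)=-x^{2}$ require, for real-valued (hence non-positive) $f$, and then Theorem \ref{thm5} cannot be invoked. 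Positivity is not cosmetic in that theorem: in the concave branch, its proof multiplies the subadditivity bound $h(s)+h(t)\ge h(s+t)$ by the values $f(\sqrt[3]{xyz})$ and $f(z)$, a step that preserves the inequality only when those values are nonnegative.

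This gap cannot be patched, because the conclusion is actually false for the class your unpacked hypothesis describes. With $h\equiv 1$ the three concavity estimates sum to $f(\sqrt{xz})+f(\sqrt{yz})+f(\sqrt{xy})\ge 2f(u)+2f(z)+f(x)+f(y)$, where $u=\sqrt[3]{xyz}$, and descending from this to the claimed bound requires $f(u)+f(z)\ge 0$ --- exactly what members of the class lack. Concretely, on $I=[1,16]$ let $f=-g$ with $g(2)=g(8)=2$ and $g\equiv 1$ elsewhere; since $1\le g\le 2$, one has $g(\alpha^{t}\beta^{1-t})\le 2\le g(\alpha)+g(\beta)$ for all $\alpha,\beta\in I$ and $t\in(0,1)$, so $f$ is ${\rm{G_tA_1}}$-concave, yet for $(x,y,z)=(1,4,16)$ the claimed inequality becomes $f(4)+f(8)+f(2)\ge f(4)+f(1)+f(4)+f(16)$, that is, $-5\ge -4$. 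To be fair, this defect is inherited from the paper itself: its implicit derivation of the corollary is the same as yours, and the example following it involves a particular function for which the stated inequality happens to hold for reasons (AM--GM) independent of the corollary.
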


\begin{example}
	Let $f\left(x\right)=-x^2$, $x>0$. Then, $f$ is
	${\rm{G_tA_1}}$-convex on $(0,\infty)$. Applying Corollary
	\ref{cor15} we get
	\begin{align*}
	xz + yz+xy   \le  \left( {\sqrt[3]{{xyz}}} \right)^2 + x^2+y^2+z^2
	\end{align*}
	for all $x,y,z > 0$.
\end{example}

\begin{corollary}
	\label{cor16}In Theorem \ref{thm5}.
	\begin{enumerate}
		\item If $f : I \to \left(0,\infty\right)$ is an
		${\rm{G_tA_h}}$-convex  and superadditive,
		\begin{align*}
		f\left( {\sqrt{xz}} \right)+f\left( {\sqrt{yz}} \right)+f\left(
		{\sqrt{xy}} \right)
		&\le h\left( {3/2} \right) f\left( {\sqrt[3]{xyz}} \right)
		+h\left(1/2 \right)\left[ {f\left( x \right)+f\left( y
			\right)+f\left( z \right)} \right]
		\\
		&\le h\left( {3/2} \right) f\left( {\sqrt[3]{xyz}} \right)
		+h\left(1/2 \right) f\left( x +y+z\right),
		\end{align*}
		for all $x,y,z\in I$. If $f$ is an ${\rm{G_tA_h}}$-concave and
		subadditive, then the inequality is reversed.

		\item If $f : I \to \left(0,\infty\right)$ is an
		${\rm{G_tA_h}}$-convex  and subadditive, then
		\begin{align*}
		f\left( {\sqrt{xz}+\sqrt{yz}+\sqrt{xy}} \right) &\le f\left(
		{\sqrt{xz}} \right)+f\left( {\sqrt{yz}} \right)+f\left(
		{\sqrt{xy}} \right) \\&\le   h\left( {3/2} \right) f\left(
		{\sqrt[3]{xyz}} \right) +h\left(1/2 \right)\left[ {f\left( x
			\right)+f\left( y \right)+f\left( z \right)} \right],
		\end{align*}
		for all $x,y,z\in I$. If $f$ is an  ${\rm{G_tA_h}}$-concave and
		superadditive, then the inequality is reversed.
	\end{enumerate}
\end{corollary}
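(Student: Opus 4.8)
The plan is to obtain Corollary \ref{cor16} as a simple two-step chaining, since the middle inequality in each of the two parts is precisely inequality \eqref{eq3.1} of Theorem \ref{thm5}. The only new ingredient is the outer inequality, which I would attach by invoking the super(sub)additivity hypothesis on $f$ together with the positivity of $h$. Because $h:I\to(0,\infty)$, the scalar $h(1/2)$ is strictly positive, so it may be used to multiply an inequality without reversing its direction.

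For part (1), assume $f$ is ${\rm{G_tA_h}}$-convex and superadditive. Theorem \ref{thm5} already supplies
\[
f(\sqrt{xz})+f(\sqrt{yz})+f(\sqrt{xy}) \le h(3/2)\,f(\sqrt[3]{xyz}) + h(1/2)\left[f(x)+f(y)+f(z)\right].
\]
Applying superadditivity twice, $f(x+y+z)\ge f(x+y)+f(z)\ge f(x)+f(y)+f(z)$, yields $f(x)+f(y)+f(z)\le f(x+y+z)$. Multiplying this by $h(1/2)>0$ and adding $h(3/2)\,f(\sqrt[3]{xyz})$ to both sides gives the claimed upper bound. In the concave/subadditive case the inequality of Theorem \ref{thm5} reverses and subadditivity instead gives $f(x)+f(y)+f(z)\ge f(x+y+z)$, so the whole chain reverses.

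For part (2), assume $f$ is ${\rm{G_tA_h}}$-convex and subadditive. Writing $a=\sqrt{xz}$, $b=\sqrt{yz}$, $c=\sqrt{xy}$ and using subadditivity twice, $f(a+b+c)\le f(a+b)+f(c)\le f(a)+f(b)+f(c)$, produces
\[
f(\sqrt{xz}+\sqrt{yz}+\sqrt{xy})\le f(\sqrt{xz})+f(\sqrt{yz})+f(\sqrt{xy}),
\]
which is the left-hand inequality, while the right-hand inequality is again \eqref{eq3.1}. The concave/superadditive case reverses both steps analogously.

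The computation is entirely routine, so I expect no genuine obstacle; the only point deserving care is the domain. To evaluate $f$ at $x+y+z$ in part (1), and at the partial sums $a+b$ and $a+b+c$ in part (2), and to apply the additivity hypotheses (which hold for $s,t\in I$), one needs these arguments to lie in $I$. I would therefore carry out the argument under the tacit assumption that $I\subseteq(0,\infty)$ is closed under the relevant sums, or else restrict $x,y,z$ so that the intermediate points remain in $I$; with that proviso the proof reduces to the two displayed chainings above.
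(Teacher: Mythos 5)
Your proof is correct and is essentially the argument the paper intends: Corollary \ref{cor16} is stated there without a separate proof, as an immediate consequence of Theorem \ref{thm5}, with the outer inequalities obtained exactly as you obtain them --- applying superadditivity (resp.\ subadditivity) of $f$ to $x+y+z$ in part (1) and to $\sqrt{xz}+\sqrt{yz}+\sqrt{xy}$ in part (2), then scaling by $h(1/2)>0$ where needed. Your closing caveat that the sums $x+y+z$, $\sqrt{xz}+\sqrt{yz}+\sqrt{xy}$ must lie in $I$ for these evaluations to make sense is a legitimate point that the paper silently ignores.
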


\begin{example}
	Let $f\left(x\right)=\cosh\left(x\right)$, which is
	${\rm{G_tA_t}}$-convex and superadditive on $(0,\infty)$. Applying
	Corollary \ref{cor16} we get
	\begin{align*}
	\frac{2}{3}\left[ {\cosh \left( {\sqrt {xz} } \right) + \cosh
		\left( {\sqrt {yz} } \right) + \cosh \left( {\sqrt {xy} } \right)}
	\right]
	&\le \cosh \left( {\sqrt[3]{{xyz}}} \right) + \frac{{\cosh \left( x
			\right) + \cosh \left( y \right) + \cosh \left( z \right)}}{3}
	\\
	&\le \cosh\left( {\sqrt[3]{xyz}} \right) + \frac{1}{3}\cosh\left( x
	+y+z\right),
	\end{align*}
	for all $x,y,z > 0$.
\end{example}

\subsection{The case when $f$ is  ${\rm{G_tG_h}}$-convex}

\begin{theorem}
	\label{thm6}Let $h: I\to \left(0,\infty\right)$ be a non-negative
	super(sub)additive
	function. If $f : I \to \left(0,\infty\right)$ is
	${\rm{G_tG_h}}$-convex function, then
	\begin{align}
	f\left( {\sqrt {xz}} \right)f\left( {\sqrt {yz}} \right)f\left(
	{\sqrt {xy}} \right) \le \,(\ge)\,\left[ {f\left( {\sqrt[3]{xyz} }
		\right)} \right]^{h\left( 3/2 \right)}  \left[ {f\left( x
		\right)f\left( y \right)f\left( z \right)} \right]^{h\left( {1/2}
		\right)},\label{eq3.2}
	\end{align}
	for all $x,y,z\in I$.
\end{theorem}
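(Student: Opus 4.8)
The plan is to merge the two techniques already on display in this section: the geometric-mean substitution used in the proof of Theorem~\ref{thm5} together with the multiplicative bookkeeping used in the proof of Theorem~\ref{thm3}. First I would record that $f$ being ${\rm{G_tG_h}}$-convex means, by \eqref{eqhGG},
\begin{align*}
f\left(\alpha^t \beta^{1-t}\right) \le \left[f(\alpha)\right]^{h(t)} \left[f(\beta)\right]^{h(1-t)}, \qquad 0 \le t \le 1,
\end{align*}
holding for all $\alpha,\beta \in I$.

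Next, exactly as in Theorem~\ref{thm5}, I would assume without loss of generality that $x \le y \le z$ and, in the case $y \le \sqrt[3]{xyz}$, produce parameters $s,t \in [0,1]$ with
\begin{align*}
(xz)^{1/2} = (xyz)^{s/3} z^{1-s}, \qquad (yz)^{1/2} = (xyz)^{t/3} z^{1-t}.
\end{align*}
Multiplying these and comparing exponents yields $s+t = \tfrac{3}{2}$ (or else $x=y=z$, the trivial case). Applying the ${\rm{G_tG_h}}$-convexity to each of the three pairs then gives
\begin{align*}
f\left(\sqrt{xz}\right) &\le \left[f\left(\sqrt[3]{xyz}\right)\right]^{h(s)} \left[f(z)\right]^{h(1-s)}, \\
f\left(\sqrt{yz}\right) &\le \left[f\left(\sqrt[3]{xyz}\right)\right]^{h(t)} \left[f(z)\right]^{h(1-t)}, \\
f\left(\sqrt{xy}\right) &\le \left[f(x) f(y)\right]^{h(1/2)}.
\end{align*}

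I would then multiply the three estimates and collect the exponents of $f\left(\sqrt[3]{xyz}\right)$ and of $f(z)$, obtaining $h(s)+h(t)$ and $h(1-s)+h(1-t)$ respectively. The decisive step is to invoke the superadditivity of $h$, which gives $h(s)+h(t) \le h(s+t) = h(3/2)$ together with $h(1-s)+h(1-t) \le h(2-s-t) = h(1/2)$; merging the resulting power of $f(z)$ with the factor $\left[f(x)f(y)\right]^{h(1/2)}$ collapses the three values at $x,y,z$ into $\left[f(x)f(y)f(z)\right]^{h(1/2)}$ and leaves $\left[f\left(\sqrt[3]{xyz}\right)\right]^{h(3/2)}$, which is precisely \eqref{eq3.2}. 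In the concave case one takes $h$ subadditive and every inequality reverses.

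The only delicate point I anticipate is the passage from the summed exponents to $h(3/2)$ and $h(1/2)$, since raising a positive base to a larger exponent is monotone in a direction that depends on whether the base exceeds or falls below $1$. This is handled exactly as in Theorem~\ref{thm3}: the manipulation of powers is driven entirely by the super(sub)additivity of $h$ and the strict positivity of $f$, so no genuinely new idea is required beyond the multiplicative analogue of the ${\rm{G_tA_h}}$ argument already carried out. In short, the proof is the product-form counterpart of Theorem~\ref{thm5} obtained by reading every ``$h(\cdot)f(\cdot)$'' there as an exponent ``$\left[f(\cdot)\right]^{h(\cdot)}$'' here.
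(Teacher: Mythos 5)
Your proposal follows essentially the same route as the paper's own proof: the same substitution $(xz)^{1/2}=(xyz)^{s/3}z^{1-s}$ and $(yz)^{1/2}=(xyz)^{t/3}z^{1-t}$ yielding $s+t=\tfrac{3}{2}$ (or the trivial case $x=y=z$), the same three ${\rm{G_tG_h}}$ estimates, and the same multiplication with super(sub)additivity of $h$ collapsing the exponents $h(s)+h(t)$ and $h(1-s)+h(1-t)$ into $h(3/2)$ and $h(1/2)$. The delicate point you flag --- that raising the base $f\left(\sqrt[3]{xyz}\right)$ to a larger exponent is only increasing when the base is at least $1$ --- is likewise passed over silently in the paper (there, as here, positivity of $f$ alone does not settle the direction), so your write-up is faithful to, and indeed slightly more careful than, the published argument, which even misquotes the third estimate in additive form before using its correct multiplicative version.
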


\begin{proof}
	$f$ is  ${\rm{G_tG_h}}$-convex iff the inequality
	\begin{align*}
	f\left( {\alpha ^t \beta ^{1 - t} } \right) \le \left[ {f\left(
		\alpha  \right)} \right]^{h\left( {t} \right)} \left[ {f\left(
		\beta  \right)} \right]^{h\left( {1 - t} \right)}, \qquad  0\le
	t\le 1
	\end{align*}
	holds for all $\alpha, \beta \in I$. As in the proof of Theorem
	\ref{thm4}, if $xyz^2=1$, then $x = y = z$, and Popoviciu's
	inequality holds.
	
	If $s+t=\frac{3}{2}$, then since $f$ is ${\rm{G_tG_h}}$-convex, we
	have
	\begin{align*}
	f\left( {\sqrt {xz} } \right) &= f\left[ {\left( {xyz} \right)^{s/3} z^{1 - s} } \right] \le \left[ {f\left( {\sqrt[3]{xyz} } \right)} \right]^{h\left( s \right)}  \left[ {f\left( z \right)} \right]^{h\left( {1 - s} \right)}, \\
	f\left( {\sqrt {yz} } \right) &= f\left[ {\left( {xyz} \right)^{t/3} z^{1 - t} } \right] \le \left[ {f\left( { \sqrt[3]{xyz}} \right)} \right]^{h\left( t \right)}  \left[ {f\left( z \right)} \right]^{h\left( {1 - t} \right)}, \\
	f\left( {\sqrt {xy} } \right) &\le h\left( {\frac{1}{2}} \right)\left[ {f\left( x \right) + f\left( y \right)}
	\right].
	\end{align*}
	Multiplying these inequalities we get
	\begin{align*}
	&f\left( {\sqrt {xz}} \right)f\left( {\sqrt {yz}} \right)f\left({\sqrt {xy}} \right) \\
	&\le \left[ {f\left( {\sqrt[3]{xyz} } \right)} \right]^{h\left( s \right)}  \left[ {f\left( z \right)} \right]^{h\left( {1 - s} \right)}\left[ {f\left( {\sqrt[3]{xyz} } \right)} \right]^{h\left( t \right)}  \left[ {f\left( z \right)} \right]^{h\left( {1 - t} \right)}  \left[ {f\left( x \right)f\left( y \right)} \right]^{h\left( {1/2} \right)}  \\
	&= \left[ {f\left( {\sqrt[3]{xyz} } \right)} \right]^{h\left( s \right)+h\left( t \right)}  \left[ {f\left( z \right)} \right]^{h\left( {1 - s} \right)+h\left( {1 - t} \right)} \left[ {f\left( x \right)f\left( y \right)} \right]^{h\left( {1/2} \right)}  \\
	&\le \left[ {f\left( {\sqrt[3]{xyz} } \right)} \right]^{h\left( s+t \right)}  \left[ {f\left( z \right)} \right]^{h\left( {2 - s - t} \right)} \left[ {f\left( x \right)f\left( y \right)} \right]^{h\left( {1/2} \right)}  \\
	&= \left[ {f\left( {\sqrt[3]{xyz} } \right)} \right]^{h\left( 3/2 \right)}  \left[ {f\left( z \right)} \right]^{h\left( {1/2} \right)} \left[ {f\left( x \right)f\left( y \right)} \right]^{h\left( {1/2}
		\right)}\\
	&= \left[ {f\left( {\sqrt[3]{xyz} } \right)} \right]^{h\left( 3/2 \right)}  \left[ {f\left( x \right)f\left( y \right)f\left( z \right)} \right]^{h\left( {1/2} \right)},
	\end{align*}
\end{proof}

\begin{remark}
Setting $z=y$ 	in \eqref{eq3.2}, we get
	\begin{align*}
	f^2\left( {\sqrt {xy}} \right)f\left( {y} \right)  \le
	\,(\ge)\,\left[ {f\left( {\sqrt[3]{xy^2} } \right)}
	\right]^{h\left( 3/2 \right)}  \left[ {f\left( x \right)f^2\left(
		y \right)} \right]^{h\left( {1/2} \right)},
	\end{align*}
	for all $x,y\in I$.
\end{remark}
\begin{corollary}
	\label{cor17} If $f : I \to \left(0,\infty\right)$ is
	${\rm{G_tG_t}}$-convex (concave) function, then
	\begin{align*}
	f^2\left( {\sqrt {xz}} \right)f^2\left( {\sqrt {yz}}
	\right)f^2\left( {\sqrt {xy}} \right) \le (\ge)  f^3\left(
	{\sqrt[3]{xyz} } \right) f\left( x \right)f\left( y \right)f\left(
	z \right),
	\end{align*}
	for all $x,y,z\in I$. The equality holds with
	$f\left(x\right)={\rm{e}}^x$, $x>0$.
\end{corollary}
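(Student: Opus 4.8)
The plan is to obtain Corollary \ref{cor17} as the specialization $h(t)=t$ of Theorem \ref{thm6}, followed by a squaring step. First I would record that the identity $h(t)=t$ satisfies $h(s+t)=s+t=h(s)+h(t)$, so it is additive and hence lies simultaneously in the superadditive and the subadditive classes; this is exactly what licenses invoking Theorem \ref{thm6} in both its convex ($\le$) and concave ($\ge$) forms at once. With this choice of $h$, the defining relation \eqref{eqhGG} reads $f(\alpha^{t}\beta^{1-t})\le[f(\alpha)]^{t}[f(\beta)]^{1-t}$, which is precisely the hypothesis that $f$ be ${\rm{G_tG_t}}$-convex (resp. concave). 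So the corollary's assumption is the $h(t)=t$ instance of Theorem \ref{thm6}'s assumption, and nothing else must be verified to apply that theorem.

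Next I would substitute the numerical values $h(3/2)=3/2$ and $h(1/2)=1/2$ into the conclusion \eqref{eq3.2}, which gives
\[
f\left(\sqrt{xz}\right)f\left(\sqrt{yz}\right)f\left(\sqrt{xy}\right)\le\,(\ge)\,\left[f\left(\sqrt[3]{xyz}\right)\right]^{3/2}\left[f(x)f(y)f(z)\right]^{1/2}.
\]
Because $f$ takes values in $(0,\infty)$, both sides are strictly positive, so squaring is monotone and preserves the inequality; carrying out the squaring turns the exponent $3/2$ into $3$ and the exponent $1/2$ into $1$, yielding
\[
f^{2}\left(\sqrt{xz}\right)f^{2}\left(\sqrt{yz}\right)f^{2}\left(\sqrt{xy}\right)\le\,(\ge)\,f^{3}\left(\sqrt[3]{xyz}\right)f(x)f(y)f(z),
\]
which is the asserted inequality. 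No separate geometric argument is required, since the point-splitting behind the proof of Theorem \ref{thm6} is already absorbed into its statement.

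For the equality claim I would test extremal functions directly. Taking logarithms in \eqref{eqhGG} with $h(t)=t$ shows that equality in the defining relation holds exactly when $\log f$ is affine in $\log x$, i.e. for the power functions $f(x)=cx^{a}$; substituting such an $f$ collapses both sides of the corollary to $(xyz)^{2a}$, so equality does hold throughout for this family. I would flag that the exponential $f(x)=\mathrm{e}^{x}$ recorded in the statement is indeed ${\rm{G_tG_t}}$-convex (this is just the weighted AM--GM inequality applied after exponentiation), but it is \emph{not} extremal in the ${\rm{G_tG_t}}$ setting; the genuine equality case is the power family. The appearance of $\mathrm{e}^{x}$ seems to be carried over from the ${\rm{A_tG_t}}$ situation of Corollary \ref{cor5}, where $\mathrm{e}^{x}$ does attain equality because the defining inequality there becomes $\mathrm{e}^{t\alpha+(1-t)\beta}\le\mathrm{e}^{t\alpha+(1-t)\beta}$.

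I expect no real obstacle: once Theorem \ref{thm6} is available the corollary is a one-line specialization plus a monotone squaring, and the only point demanding care is confirming that $h(t)=t$ belongs to both the superadditive and the subadditive classes, so that the two-sided (convex/concave) formulation is justified. Correctly identifying the equality case as the power functions rather than $\mathrm{e}^{x}$ is the one substantive refinement I would make to the stated result.
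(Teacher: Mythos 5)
Your proposal is correct and matches the paper's (implicit) proof: the paper offers no separate argument for Corollary \ref{cor17}, which is obtained exactly as you do, by taking $h(t)=t$ (additive, hence both super- and subadditive) in Theorem \ref{thm6} and squaring the resulting inequality, squaring being legitimate since $f>0$. Your correction of the equality claim is also warranted: for $f(x)=\mathrm{e}^{x}$ the two sides of the corollary have exponents $2\left(\sqrt{xz}+\sqrt{yz}+\sqrt{xy}\right)$ and $3\sqrt[3]{xyz}+x+y+z$, which already differ at $x=y=1$, $z=64$ (giving $34$ versus $78$), whereas every power function $f(x)=cx^{a}$ makes both sides equal to $c^{6}(xyz)^{2a}$; the ``$\mathrm{e}^{x}$'' in the statement is evidently carried over from the arithmetic-mean case of Corollary \ref{cor5}, where that function is genuinely extremal.
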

\begin{example}
	Let $f\left(x\right)=\cosh\left(x\right)$, which is
	${\rm{G_tG_t}}$-convex  on $(0,\infty)$. Applying Corollary
	\ref{cor17} we get
	\begin{align*}
	\cosh^2\left( {\sqrt {xz}} \right)\cosh^2\left( {\sqrt {yz}}
	\right)\cosh^2\left( {\sqrt {xy}} \right) \le   f^3\left(
	{\sqrt[3]{xyz} } \right) \cosh\left( x \right)\cosh\left( y
	\right)\cosh\left( z \right),
	\end{align*}
	for all $x,y,z > 0$.
\end{example}

\begin{corollary}
	\label{cor18} If $f : I \to \left(0,\infty\right)$ is
	${\rm{G_tG_1/t}}$-concave function, then
	\begin{align*}
	f^3\left( {\sqrt {xz}} \right)f^3\left( {\sqrt {yz}}
	\right)f^3\left( {\sqrt {xy}} \right)  \ge f^2\left(
	{\sqrt[3]{xyz} } \right)   f^6\left( x \right)f^6\left( y
	\right)f^6\left( z \right),
	\end{align*}
	for all $x,y,z\in I$.
\end{corollary}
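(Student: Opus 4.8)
The plan is to derive this inequality as the specialization of Theorem \ref{thm6} to the weight $h(t)=1/t=t^{-1}$. First I would record that, with this choice, the reversed (concave) form of the defining inequality \eqref{eqhGG} reads
\[
f\left( {\alpha ^t \beta ^{1-t}} \right) \ge \left[ {f(\alpha)} \right]^{1/t} \left[ {f(\beta)} \right]^{1/(1-t)},
\]
which is exactly the ${\rm{G_tG_{1/t}}}$-concavity hypothesis of the corollary. Thus $f$ is eligible for the $(\ge)$ branch of Theorem \ref{thm6}, provided the subadditivity requirement placed on $h$ in that branch is met.

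Next I would check that $h(t)=t^{-1}$ is subadditive. This is immediate from the classification given earlier in the paper, namely that $h(x)=x^k$ is subadditive for $k\in(-\infty,-1]\cup[0,1)$, applied with $k=-1$; equivalently one notes $\tfrac{1}{s+t}\le\tfrac1s+\tfrac1t$ for all $s,t>0$. (Incidentally, such an $f$ necessarily satisfies $0\le f\le 1$, as observed earlier.) With subadditivity confirmed, Theorem \ref{thm6} gives, for all $x,y,z\in I$,
\[
f\left( {\sqrt{xz}} \right) f\left( {\sqrt{yz}} \right) f\left( {\sqrt{xy}} \right) \ge \left[ {f\left( {\sqrt[3]{xyz}} \right)} \right]^{h(3/2)} \left[ {f(x)f(y)f(z)} \right]^{h(1/2)}.
\]

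The final step is a direct computation: evaluating $h(3/2)=2/3$ and $h(1/2)=2$ turns the right-hand side into $\left[ {f(\sqrt[3]{xyz})} \right]^{2/3}\left[ {f(x)f(y)f(z)} \right]^{2}$. Since $f$ is strictly positive, raising both sides to the third power preserves the inequality and clears the fractional exponent, yielding
\[
f^3\left( {\sqrt{xz}} \right) f^3\left( {\sqrt{yz}} \right) f^3\left( {\sqrt{xy}} \right) \ge f^2\left( {\sqrt[3]{xyz}} \right) f^6(x) f^6(y) f^6(z),
\]
which is the asserted inequality.

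I do not anticipate a real obstacle, since this is a specialization rather than a fresh argument. The single point requiring attention is that $h(t)=1/t$ blows up as $t\to 0$, so one must keep the auxiliary parameters $s,t$ and their complements $1-s,1-t$ inside the open interval on which the geometric representation in the proof of Theorem \ref{thm6} operates --- precisely the range $t\in(0,1)$ for which ${\rm{G_tG_{1/t}}}$-concavity is stipulated. Beyond this bookkeeping, the result is a mechanical substitution into Theorem \ref{thm6}.
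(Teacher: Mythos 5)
Your proposal is correct and follows exactly the route the paper intends: Corollary \ref{cor18} is the specialization of Theorem \ref{thm6} (subadditive $h$ paired with a ${\rm{G_tG_h}}$-concave $f$, hence the $\ge$ branch) to $h(t)=1/t$, with $h(3/2)=2/3$ and $h(1/2)=2$, followed by cubing. Your additional checks --- the subadditivity of $t\mapsto 1/t$, the positivity of $f$ justifying the cubing, and the restriction $s,t\in(0,1)$ --- are sound and merely make explicit what the paper leaves implicit.
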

\begin{example}
	Let $f\left(x\right)= \exp\left(-x\right)$  which is
	$\frac{1}{t}$-${\rm{G_tG_t}}$-concave  on $(0,\infty)$. Applying
	Corollary \ref{cor18} we get
	\begin{align*}
	\sqrt {xz} + \sqrt {yz}+  \sqrt {xy}   \le
	\frac{2}{3}\sqrt[3]{xyz}+2x+2y+2z,
	\end{align*}
	for all $x,y,z > 0$.
\end{example}

\begin{corollary}
	\label{cor19} If $f : I \to \left(0,\infty\right)$ is
	${\rm{G_tG_1}}$-concave function, then
	\begin{align*}
	f\left( {\sqrt {xz}} \right)f\left( {\sqrt {yz}} \right)f\left(
	{\sqrt {xy}} \right) \le  f\left( {\sqrt[3]{xyz} } \right)f\left(
	x \right)f\left( y \right)f\left( z \right),
	\end{align*}
	for all $x,y,z\in I$.
\end{corollary}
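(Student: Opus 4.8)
The plan is to obtain Corollary \ref{cor19} as the special case $h\equiv 1$ of Theorem \ref{thm6}. Indeed, a ${\rm{G_tG_1}}$ function is exactly one for which the defining relation
\begin{align*}
f\left( \alpha^t \beta^{1-t} \right) \le \left[ f\left( \alpha \right) \right]^{h(t)} \left[ f\left( \beta \right) \right]^{h(1-t)}
\end{align*}
(with the inequality reversed in the concave reading) is taken with the constant weight $h(t)\equiv 1$, so that in particular $h(3/2)=h(1/2)=1$. Substituting these two values into the conclusion \eqref{eq3.2} of Theorem \ref{thm6} collapses the exponents $h(3/2)$ and $h(1/2)$ to $1$ and produces precisely the asserted inequality; thus at the level of the final formula there is nothing left to do once Theorem \ref{thm6} is granted.

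To see the mechanism explicitly one repeats the argument of Theorem \ref{thm6}. Assuming without loss of generality $x\le y\le z$, the degenerate case $x=y=z$ is immediate, and otherwise one writes
\begin{align*}
\sqrt{xz}=\left(\sqrt[3]{xyz}\right)^{s} z^{1-s}, \qquad \sqrt{yz}=\left(\sqrt[3]{xyz}\right)^{t} z^{1-t}, \qquad s+t=\tfrac{3}{2}.
\end{align*}
Applying the defining relation to each of $f\left(\sqrt{xz}\right)$, $f\left(\sqrt{yz}\right)$ and $f\left(\sqrt{xy}\right)$ and then multiplying yields a bound in which $f\left(\sqrt[3]{xyz}\right)$ carries the exponent $h(s)+h(t)$ and $f(z)$ carries the exponent $h(1-s)+h(1-t)$, exactly as in the penultimate display of that proof.

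The one point that requires care, and which I expect to be the main obstacle, is the exponent-merging step, where $h(s)+h(t)$ is replaced by $h(s+t)=h(3/2)$ and $h(1-s)+h(1-t)$ by $h(2-s-t)=h(1/2)$. This is precisely where the super/subadditivity hypothesis of Theorem \ref{thm6} enters: for the constant choice one has $h(s)+h(t)=2$ while $h(s+t)=1$, so $h\equiv 1$ is subadditive, and one must verify that this weight sits in the branch of Theorem \ref{thm6} that is consistent with the printed direction of the inequality, after which both surviving exponents equal $1$ and the conclusion follows at once. Equivalently, through the substitution $G=\log f\circ\exp$ the ${\rm{G_tG_h}}$ structure becomes an ${\rm{A_tA_h}}$ structure, so the statement is simply the logarithmic image of the additive Popoviciu inequality of Corollary \ref{cor3}, and the same caveat on matching the inequality's sense to the constant weight applies there.
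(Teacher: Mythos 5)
Your route coincides with the paper's: Corollary \ref{cor19} is offered there, without separate proof, as the specialization $h\equiv 1$ of Theorem \ref{thm6}, exactly as you propose. The problem is that the verification you defer---whether the constant weight lands in the branch of Theorem \ref{thm6} ``consistent with the printed direction''---fails, and this is not a side issue but the decisive point. As you yourself compute, $h\equiv 1$ is subadditive ($h(s)+h(t)=2>1=h(s+t)$), and in Theorem \ref{thm6} (read in parallel with Theorems \ref{thm1} and \ref{thm3}) subadditivity is paired with ${\rm{G_tG_h}}$-\emph{concavity} and yields the \emph{reversed} inequality. Hence specializing Theorem \ref{thm6} at $h\equiv 1$ gives
\begin{align*}
f\left( {\sqrt {xz}} \right)f\left( {\sqrt {yz}} \right)f\left( {\sqrt {xy}} \right) \ge f\left( {\sqrt[3]{xyz}} \right)f\left( x \right)f\left( y \right)f\left( z \right),
\end{align*}
the opposite of the stated $\le$; compare Corollary \ref{cor18}, where the same concave branch is printed with $\ge$. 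The stated direction is in fact false: $f\equiv \tfrac{1}{2}$ is ${\rm{G_tG_1}}$-concave (since $\tfrac{1}{2}\ge\tfrac{1}{4}$), yet the printed inequality would demand $\tfrac{1}{8}\le\tfrac{1}{16}$. The example the paper places after the corollary ($f(x)={\rm{e}}^{-x}$, giving $\sqrt{xz}+\sqrt{yz}+\sqrt{xy}\le\sqrt[3]{xyz}+x+y+z$) matches only the $\ge$ direction, so the $\le$ in the statement is a sign error; your argument, carried through honestly, proves at best the $\ge$ version, not the statement as printed.

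There is a second gap, which survives even after the sign is corrected, in your claim that ``there is nothing left to do once Theorem \ref{thm6} is granted.'' In the concave branch with $h\equiv 1$, multiplying the three defining inequalities produces the lower bound $\left[ f\left( \sqrt[3]{xyz} \right) \right]^{2}\left[ f\left( z \right) \right]^{2}f\left( x \right)f\left( y \right)$, and the exponent-merging step (replacing $h(s)+h(t)=2$ by $h(3/2)=1$) asks for $\left[ f\left( \sqrt[3]{xyz} \right) \right]^{2}\left[ f\left( z \right) \right]^{2}\ge f\left( \sqrt[3]{xyz} \right) f\left( z \right)$, i.e.\ $f\left( \sqrt[3]{xyz} \right) f\left( z \right)\ge 1$. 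But by the paper's own remark, every ${\rm{M_tG_1}}$-concave function satisfies $0<f\le 1$, so this step runs in the wrong direction for every nontrivial member of the class. Your alternative route through Corollary \ref{cor3} hits the same wall additively: there the merge needs $f\left(\frac{x+y+z}{3}\right)+f\left( z \right)\ge 0$, while ${\rm{A_tA_1}}$-concave functions are necessarily nonpositive. So the corollary does not actually follow from Theorem \ref{thm6} in either direction; the defect originates in the paper, but your proposal inherits it at precisely the step you flagged and left unchecked.
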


\begin{example}
	Let $f\left(x\right)=\exp\left(-x\right)$, which is
	${\rm{G_tG_1}}$-concave  on $(0,\infty)$. Applying Corollary
	\ref{cor19} we get
	\begin{align*}
	\sqrt {xz}+ \sqrt {yz}+\sqrt {xy} \le  \sqrt[3]{xyz} + x+y+z,
	\end{align*}
	for all $x,y,z > 0$.
\end{example}

\begin{corollary}
	\label{cor20}In Theorem \ref{thm6}.
	\begin{enumerate}
		\item If $f : I \to \left(0,\infty\right)$ is an
		${\rm{G_tG_h}}$-convex  and supermultiplicative,
		\begin{align*}
		f\left( {\sqrt {xz}} \right)f\left( {\sqrt {yz}} \right)f\left(
		{\sqrt {xy}} \right) &\le  \left[ {f\left( {\sqrt[3]{xyz} }
			\right)} \right]^{h\left( 3/2 \right)}  \left[ {f\left( x
			\right)f\left( y \right)f\left( z \right)} \right]^{h\left( {1/2}
			\right)}
		\\
		&\le  \left[ {f\left( {\sqrt[3]{xyz} } \right)} \right]^{h\left(
			3/2 \right)}  \left[ {f\left( xyz \right)} \right]^{h\left( {1/2}
			\right)},
		\end{align*}
		for all $x,y,z\in I$.
		
		\item If $f : I \to \left(0,\infty\right)$ is an
		${\rm{G_tG_h}}$-convex  and submultiplicative, then
		\begin{align*}
		f\left( {xzy} \right) &\le f\left( {\sqrt {xz}} \right)f\left(
		{\sqrt {yz}} \right)f\left( {\sqrt {xy}} \right)
		\\
		&\le  \left[ {f\left( {\sqrt[3]{xyz} } \right)} \right]^{h\left(
			3/2 \right)}  \left[ {f\left( x \right)f\left( y \right)f\left( z
			\right)} \right]^{h\left( {1/2} \right)}
		\\
		&\le  \left[ {f\left( {\sqrt[3]{x} } \right)f\left( {\sqrt[3]{y} }
			\right)f\left( {\sqrt[3]{z} } \right)} \right]^{h\left( 3/2
			\right)}  \left[ {f\left( x \right)f\left( y \right)f\left( z
			\right)} \right]^{h\left( {1/2} \right)},
		\end{align*}
		for all $x,y,z\in I$.
	\end{enumerate}
\end{corollary}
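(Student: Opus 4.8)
The plan is to read both parts of the corollary as refinements of Theorem~\ref{thm6}: in each chain the central inequality is \emph{exactly} \eqref{eq3.2}, so the only thing to supply is the extra outer estimate, and this will come entirely from the multiplicativity hypothesis on $f$ together with the standing positivity $h:I\to(0,\infty)$. The single recurring tool is that, for positive reals, the map $t\mapsto t^{p}$ is order-preserving whenever $p>0$; since $h(3/2)$ and $h(1/2)$ are positive and $f$ takes values in $(0,\infty)$, I may raise any valid inequality to these powers and multiply by positive factors without changing its direction.

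For part (1) I would first record the elementary consequence of supermultiplicativity, namely $f(xyz)=f\left((xy)z\right)\ge f(xy)f(z)\ge f(x)f(y)f(z)$, obtained by applying $f(uv)\ge f(u)f(v)$ twice. Raising both sides to the power $h(1/2)>0$ gives $\left[f(x)f(y)f(z)\right]^{h(1/2)}\le\left[f(xyz)\right]^{h(1/2)}$, and multiplying through by the positive factor $\left[f\left(\sqrt[3]{xyz}\right)\right]^{h(3/2)}$ produces the final inequality of the chain; prepending \eqref{eq3.2} completes part (1).

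For part (2) both new inequalities rest on the two identities $\sqrt{xz}\,\sqrt{yz}\,\sqrt{xy}=xyz$ and $\sqrt[3]{x}\,\sqrt[3]{y}\,\sqrt[3]{z}=\sqrt[3]{xyz}$. The leftmost inequality follows by writing $f(xyz)=f\left(\sqrt{xz}\,\sqrt{yz}\,\sqrt{xy}\right)$ and applying submultiplicativity $f(uv)\le f(u)f(v)$ twice, yielding $f(xyz)\le f\left(\sqrt{xz}\right)f\left(\sqrt{yz}\right)f\left(\sqrt{xy}\right)$. For the rightmost inequality I would apply submultiplicativity to the second identity to get $f\left(\sqrt[3]{xyz}\right)\le f\left(\sqrt[3]{x}\right)f\left(\sqrt[3]{y}\right)f\left(\sqrt[3]{z}\right)$, then raise to the power $h(3/2)>0$ and multiply by the positive factor $\left[f(x)f(y)f(z)\right]^{h(1/2)}$, again inserting \eqref{eq3.2} in the middle.

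Once Theorem~\ref{thm6} is granted, the argument is essentially bookkeeping; the only point requiring attention is the direction of exponentiation, so the main (and minor) obstacle is to check at each step that the exponent in play, $h(3/2)$ or $h(1/2)$, is positive, which is exactly what the hypothesis $h:I\to(0,\infty)$ guarantees. No multiplicativity identity is needed beyond the two products of radicals noted above, and both reduce to the square-root and cube-root laws on $(0,\infty)$.
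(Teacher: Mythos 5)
Your proposal is correct and matches the intended argument: the paper states Corollary \ref{cor20} without proof precisely because it is meant to follow by sandwiching the Theorem \ref{thm6} inequality \eqref{eq3.2} between the outer estimates that super/sub-multiplicativity provides, via the identities $\sqrt{xz}\,\sqrt{yz}\,\sqrt{xy}=xyz$ and $\sqrt[3]{x}\,\sqrt[3]{y}\,\sqrt[3]{z}=\sqrt[3]{xyz}$ together with monotonicity of $t\mapsto t^{p}$ for the positive exponents $h(3/2)$ and $h(1/2)$. Your bookkeeping of inequality directions under exponentiation is exactly the point that needs checking, and you handled it correctly.
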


\begin{example}
	Let $f\left(x\right)=\cosh\left(x\right)$, which is
	${\rm{G_tG_t}}$-convex and supermultiplicative on $[1,\infty)$.
	Applying Corollary \ref{cor20} we get
	\begin{align*}
	\cosh^2\left( {\sqrt {xz}} \right)\cosh^2\left( {\sqrt {yz}}
	\right)\cosh^2\left( {\sqrt {xy}} \right) &\le   \cosh^3\left(
	{\sqrt[3]{xyz} } \right) \cosh\left( x \right)\cosh\left( y
	\right)\cosh\left( z \right)
	\\
	&\le   \cosh^3\left( {\sqrt[3]{xyz} } \right)   \cosh\left( xyz
	\right)
	\end{align*}
	for all $x,y,z \ge 1$.
\end{example}

\subsection{The case when $f$ is  ${\rm{G_tH_h}}$-convex}

\begin{theorem}
\label{thm7}	Let $h: I\to \left(0,\infty\right)$ be a non-negative
	super(sub)additive
	function. If $f : I \to \left(0,\infty\right)$  is
	${\rm{G_tH_h}}$-concave (convex) function, then
	\begin{align}
	&\frac{1}{{f\left( {\sqrt {xz} } \right)}} + \frac{1}{{f\left(
			{\sqrt {yz} } \right)}} + \frac{1}{{f\left( {\sqrt {xy} }
			\right)}}\nonumber
	\\
	&\le \,(\ge)\,h\left( {\frac{1}{2}} \right)\left[
	{\frac{1}{{f\left( x \right)}} + \frac{1}{{f\left( y \right)}} +
		\frac{1}{{f\left( z \right)}}} \right] + \frac{{h\left( {3/2}
			\right)}}{{f\left( {\sqrt[3]{{xyz}}} \right)}}, \label{eq3.3}
	\end{align}
	for all $x,y,z\in I$.
\end{theorem}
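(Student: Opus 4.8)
The plan is to fuse the geometric substitution used in the proof of Theorem~\ref{thm5} (and Theorem~\ref{thm6}) with the reciprocal manipulation employed in the proof of Theorem~\ref{thm4}. I would begin exactly as in Theorem~\ref{thm5}: assume without loss of generality that $x\le y\le z$, and suppose $y\le\sqrt[3]{xyz}$, so that
\begin{align*}
\sqrt[3]{xyz}\le\sqrt{xz}\le z\qquad\text{and}\qquad\sqrt[3]{xyz}\le\sqrt{yz}\le z.
\end{align*}
This guarantees the existence of $s,t\in[0,1]$ with $\sqrt{xz}=(xyz)^{s/3}z^{1-s}$ and $\sqrt{yz}=(xyz)^{t/3}z^{1-t}$. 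Multiplying these two relations yields, exactly as in Theorem~\ref{thm6}, the identity $(xyz)^{(s+t)/3-1/2}z^{2-(s+t)-1/2}=1$; if $xyz^2=1$ then $x=y=z$ and Popoviciu's inequality is immediate, so I may assume $s+t=\tfrac{3}{2}$.

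Next I would invoke the defining inequality \eqref{eqhGH} for a $G_tH_h$-concave function, namely $f(\alpha^t\beta^{1-t})\ge\frac{f(\alpha)f(\beta)}{h(1-t)f(\alpha)+h(t)f(\beta)}$. Applying it with $\alpha=\sqrt[3]{xyz}$, $\beta=z$ and exponents $s$ and $t$ respectively, then passing to reciprocals (legitimate since $f>0$), produces
\begin{align*}
\frac{1}{f(\sqrt{xz})}\le\frac{h(1-s)f(\sqrt[3]{xyz})+h(s)f(z)}{f(\sqrt[3]{xyz})f(z)},\qquad\frac{1}{f(\sqrt{yz})}\le\frac{h(1-t)f(\sqrt[3]{xyz})+h(t)f(z)}{f(\sqrt[3]{xyz})f(z)}.
\end{align*}
The third term is handled separately by taking $\alpha=x$, $\beta=y$ and $t=\tfrac{1}{2}$, which after inversion gives $\frac{1}{f(\sqrt{xy})}\le h(\tfrac{1}{2})\bigl(\frac{1}{f(x)}+\frac{1}{f(y)}\bigr)$.

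Summing the three reciprocal estimates, the main step is to bound the numerator using the superadditivity of $h$: since $s+t=\tfrac{3}{2}$ one has $h(1-s)+h(1-t)\le h(2-s-t)=h(\tfrac{1}{2})$ and $h(s)+h(t)\le h(s+t)=h(\tfrac{3}{2})$. Substituting these bounds and splitting the resulting fraction $\frac{h(1/2)f(\sqrt[3]{xyz})+h(3/2)f(z)}{f(\sqrt[3]{xyz})f(z)}$ into $\frac{h(1/2)}{f(z)}+\frac{h(3/2)}{f(\sqrt[3]{xyz})}$ collects the three $1/f$ contributions under the common weight $h(\tfrac{1}{2})$ and isolates the $f(\sqrt[3]{xyz})$ term, which is precisely \eqref{eq3.3}. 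The convex case follows verbatim with every inequality reversed and $h$ taken subadditive. The only genuine obstacle is bookkeeping, that is, checking that the geometric substitution of Theorem~\ref{thm5} meshes cleanly with the reciprocal inversion of Theorem~\ref{thm4}; since both ingredients are already established, no essentially new difficulty arises.
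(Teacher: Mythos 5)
Your proposal is correct and follows essentially the same route as the paper's own proof: the geometric substitution $\sqrt{xz}=(xyz)^{s/3}z^{1-s}$, $\sqrt{yz}=(xyz)^{t/3}z^{1-t}$ with $s+t=\tfrac{3}{2}$, the three reciprocal estimates obtained from the reversed inequality \eqref{eqhGH}, and the superadditivity bounds $h(1-s)+h(1-t)\le h(1/2)$, $h(s)+h(t)\le h(3/2)$ are exactly the steps in the paper (your only improvement is citing Theorem~\ref{thm5} rather than the paper's misdirected reference to Theorem~\ref{thm4} for the substitution).
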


\begin{proof}
	$f$ is  ${\rm{G_tH_h}}$-convex iff the inequality
	\begin{align*}
	f\left( {\alpha ^t \beta ^{1 - t} } \right) \le \frac{{f\left(
			\alpha  \right)f\left( \beta  \right)}}{{h\left( {1- t}
			\right)f\left( \alpha  \right) + h\left( { t} \right)f\left( \beta
			\right)}}, \qquad  0\le t\le 1
	\end{align*}
	holds for all $\alpha, \beta \in I$. As in the proof of Theorem \ref{thm4}, if
	$xyz^2=1$, then $x = y = z$, and Popoviciu's inequality holds.
	
	If $s+t=\frac{3}{2}$, then since $f$ is ${\rm{G_tH_h}}$-convex, we
	have
	\begin{align*}
	f\left( {\sqrt {xz} } \right) = f\left[ {\left( {xyz}
		\right)^{s/3} z^{1 - s} } \right] \ge \frac{{f\left(
			{\sqrt[3]{{xyz}}} \right)f\left( z \right)}}{{h\left( {1 - s}
			\right)f\left( {\sqrt[3]{{xyz}}} \right) + h\left( s
			\right)f\left( z \right)}}
	\end{align*}
	and this equivalent to write
	\begin{align}
	\frac{1}{{f\left( {\sqrt {xz} } \right)}} \le \frac{{h\left( {1 -
				s} \right)f\left( {\sqrt[3]{{xyz}}} \right) + h\left( s
			\right)f\left( z \right)}}{{f\left( {\sqrt[3]{{xyz}}}
			\right)f\left( z \right)}},\label{eq3.4}
	\end{align}
	similarly,
	\begin{align*}
	f\left( {\sqrt {yz} } \right) = f\left[ {\left( {xyz}
		\right)^{t/3} z^{1 - t} } \right] \ge \frac{{f\left(
			{\sqrt[3]{{xyz}}} \right)f\left( z \right)}}{{h\left( {1 - t}
			\right)f\left( {\sqrt[3]{{xyz}}} \right) + h\left( t
			\right)f\left( z \right)}}
	\end{align*}
	which equivalent to write
	\begin{align}
	\frac{1}{{f\left( {\sqrt {yz} } \right)}} \le \frac{{h\left( {1 -
				t} \right)f\left( {\sqrt[3]{{xyz}}} \right) + h\left( t
			\right)f\left( z \right)}}{{f\left( {\sqrt[3]{{xyz}}}
			\right)f\left( z \right)}},\label{eq3.5}
	\end{align}
	and
	\begin{align}
	f\left( {\sqrt {xy} } \right) \ge \frac{{f\left( x \right)f\left(
			y \right)}}{{h\left( {1/2} \right)\left( {f\left( x \right) +
				f\left( y \right)} \right)}}\nonumber
	\\
	\Longleftrightarrow \frac{1}{{f\left( {\sqrt {xy} } \right)}} \le
	\frac{{h\left( {1/2} \right)\left( {f\left( x \right) + f\left( y
				\right)} \right)}}{{f\left( x \right)f\left( y
			\right)}}.\label{eq3.6}
	\end{align}
	Summing the inequalities \eqref{eq3.4}--\eqref{eq3.6}, we get
	\begin{align*}
	&\frac{1}{{f\left( {\sqrt {xz} } \right)}} + \frac{1}{{f\left( {\sqrt {yz} } \right)}} + \frac{1}{{f\left( {\sqrt {xy} } \right)}} \\
	&\le \frac{{h\left( {1 - s} \right)f\left( {\sqrt[3]{{xyz}}} \right) + h\left( s \right)f\left( z \right)}}{{f\left( {\sqrt[3]{{xyz}}} \right)f\left( z \right)}} + \frac{{h\left( {1 - t} \right)f\left( {\sqrt[3]{{xyz}}} \right) + h\left( t \right)f\left( z \right)}}{{f\left( {\sqrt[3]{{xyz}}} \right)f\left( z \right)}} \\&\qquad+ \frac{{h\left( {1/2} \right)\left( {f\left( x \right) + f\left( y \right)} \right)}}{{f\left( x \right)f\left( y \right)}} \\
	&= \frac{{\left[ {h\left( {1 - s} \right) + h\left( {1 - t} \right)} \right]f\left( {\sqrt[3]{{xyz}}} \right) + \left[ {h\left( s \right) + h\left( t \right)} \right]f\left( z \right)}}{{f\left( {\sqrt[3]{{xyz}}} \right)f\left( z \right)}} \\&\qquad+ \frac{{h\left( {1/2} \right)\left( {f\left( x \right) + f\left( y \right)} \right)}}{{f\left( x \right)f\left( y \right)}} \\
	&\le \frac{{h\left( {2 - s - t} \right)f\left( {\sqrt[3]{{xyz}}} \right) + h\left( {s + t} \right)f\left( z \right)}}{{f\left( {\sqrt[3]{{xyz}}} \right)f\left( z \right)}} + \frac{{h\left( {1/2} \right)\left( {f\left( x \right) + f\left( y \right)} \right)}}{{f\left( x \right)f\left( y \right)}} \\
	&\le \frac{{h\left( {1/2} \right)f\left( {\sqrt[3]{{xyz}}} \right) + h\left( {3/2} \right)f\left( z \right)}}{{f\left( {\sqrt[3]{{xyz}}} \right)f\left( z \right)}} + \frac{{h\left( {1/2} \right)\left( {f\left( x \right) + f\left( y \right)} \right)}}{{f\left( x \right)f\left( y \right)}} \\
	&= h\left( {\frac{1}{2}} \right)\left[ {\frac{1}{{f\left( x \right)}} + \frac{1}{{f\left( y \right)}} + \frac{1}{{f\left( z \right)}}} \right] + \frac{{h\left( {3/2} \right)}}{{f\left( {\sqrt[3]{{xyz}}}
			\right)}},
	\end{align*}
	which proves the inequality in \eqref{eq3.3}.
\end{proof}

\begin{remark}
Setting $z=y$  in \eqref{eq3.3}, then we get
	\begin{align*}
	\frac{2}{{f\left( {\sqrt {xy} } \right)}} + \frac{1}{{f\left( {y}
			\right)}} \le \,(\ge)\,h\left( {\frac{1}{2}} \right)\left[
	{\frac{1}{{f\left( x \right)}} + \frac{2}{{f\left( y \right)}}  }
	\right] + \frac{{h\left( {3/2} \right)}}{{f\left(
			{\sqrt[3]{{xy^2}}} \right)}},
	\end{align*}
	for all $x,y\in I$.
\end{remark}

\begin{corollary}
	\label{cor21}If $f : I \to \left(0,\infty\right)$  is
	${\rm{G_tH_t}}$-concave (convex) function, then
	\begin{align*}
	\frac{2}{3}\left[{\frac{1}{{f\left( {\sqrt {xz} } \right)}} +
		\frac{1}{{f\left( {\sqrt {yz} } \right)}} + \frac{1}{{f\left(
				{\sqrt {xy} } \right)}}}\right]\le \,(\ge)\,\frac{1}{3}\left[
	{\frac{1}{{f\left( x \right)}} + \frac{1}{{f\left( y \right)}} +
		\frac{1}{{f\left( z \right)}}} \right] +  \frac{{1}}{{f\left(
			{\sqrt[3]{{xyz}}} \right)}},
	\end{align*}
	for all $x,y,z\in I$. The equality holds with
	$f\left(x\right)=\frac{1}{\log\left(x\right)}$, $x \gvertneqq 1$.
\end{corollary}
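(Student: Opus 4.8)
The plan is to obtain this corollary as the special case $h(t)=t$ of Theorem \ref{thm7}. First I would observe that the identity function $h(t)=t$ is additive, and hence simultaneously superadditive and subadditive; this is exactly what licenses \emph{both} the concave ($\le$) and the convex ($\ge$) branches of Theorem \ref{thm7}. Substituting $h(1/2)=1/2$ and $h(3/2)=3/2$ directly into \eqref{eq3.3} then produces
\begin{align*}
\frac{1}{f\left(\sqrt{xz}\right)}+\frac{1}{f\left(\sqrt{yz}\right)}+\frac{1}{f\left(\sqrt{xy}\right)} \le\,(\ge)\,\frac{1}{2}\left[\frac{1}{f(x)}+\frac{1}{f(y)}+\frac{1}{f(z)}\right]+\frac{3/2}{f\left(\sqrt[3]{xyz}\right)},
\end{align*}
and multiplying through by the positive constant $2/3$ yields exactly the asserted inequality. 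Thus the bulk of the statement needs no new argument beyond a specialization followed by a scaling.

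Next I would treat the equality assertion for $f(x)=1/\log x$ with $x>1$ (so that $f>0$). The key point is that $g:=1/f=\log$ carries the geometric mean to the arithmetic mean, since $\log\left(\alpha^t\beta^{1-t}\right)=t\log\alpha+(1-t)\log\beta$. Rewriting the defining relation \eqref{eqhGH} for ${\rm{G_tH_t}}$-convexity in reciprocal form shows that $f$ is ${\rm{G_tH_t}}$-convex (concave) precisely when $1/f\left(\alpha^t\beta^{1-t}\right)\ge(\le)\,t/f(\alpha)+(1-t)/f(\beta)$; for $f=1/\log$ this holds with equality, so $f$ saturates both directions at once. Because $h(t)=t$ is additive, the super/subadditivity steps invoked in the proof of Theorem \ref{thm7}, namely the passage from $h(1-s)+h(1-t)$ to $h(2-s-t)$ and from $h(s)+h(t)$ to $h(s+t)$, also hold with equality. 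Retracing the proof of Theorem \ref{thm7} with these equalities in place shows every step is an equality, giving equality in the corollary.

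The only genuinely computational point is this equality verification, and even it reduces to the elementary observation that $1/\log x$ is an affine function of $\log x$. I do not anticipate any real obstacle: the whole corollary is a direct specialization of an already-established theorem, and the equality case is a routine trace through the (now sharp) inequality chain.
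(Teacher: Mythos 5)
Your proposal is correct and is exactly the route the paper intends: Corollary \ref{cor21} is stated without proof as the specialization $h(t)=t$ of Theorem \ref{thm7}, using that the identity is additive (hence both superadditive and subadditive, so both branches apply), followed by multiplication by $2/3$. Your verification of the equality case for $f(x)=1/\log x$ — observing that $1/f=\log$ turns the reciprocal form of \eqref{eqhGH} into an exact identity, so $f$ is simultaneously ${\rm{G_tH_t}}$-convex and ${\rm{G_tH_t}}$-concave and both directions of the inequality hold at once — is a correct (and routine) supplement to what the paper merely asserts.
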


\begin{example}
	Let $f\left(x\right)=\cosh \left(x\right)$, then $f$ is
	${\rm{G_tH_t}}$-convex for all $x \ge1$. Applying Corollary
	\ref{cor21}, then we get
	\begin{multline*}
	\frac{2}{3}\left[{\frac{1}{{\cosh\left( {\sqrt {xz} } \right)}} +
		\frac{1}{{\cosh\left( {\sqrt {yz} } \right)}} +
		\frac{1}{{\cosh\left( {\sqrt {xy} } \right)}}}\right]
	\\
	\ge \frac{1}{3}\left[ {\frac{1}{{\cosh\left( x \right)}} +
		\frac{1}{{\cosh\left( y \right)}} + \frac{1}{{\cosh\left( z
				\right)}}} \right] +  \frac{{1}}{{\cosh\left( {\sqrt[3]{{xyz}}}
			\right)}},
	\end{multline*}
	for all $x,y,z \ge1$.
\end{example}

\begin{corollary}
	\label{cor22} If $f : I \to \left(0,\infty\right)$  is
	${\rm{G_tH_{1/t}}}$-convex function, then
	\begin{align*}
	\frac{3}{2}\left[{\frac{1}{{f\left( {\sqrt {xz} } \right)}} +
		\frac{1}{{f\left( {\sqrt {yz} } \right)}} + \frac{1}{{f\left(
				{\sqrt {xy} } \right)}}}\right]\ge  3\left[ {\frac{1}{{f\left( x
				\right)}} + \frac{1}{{f\left( y \right)}} + \frac{1}{{f\left( z
				\right)}}} \right] +  \frac{{1}}{{f\left( {\sqrt[3]{{xyz}}}
			\right)}},
	\end{align*}
	for all $x,y,z\in I$.
\end{corollary}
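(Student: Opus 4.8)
The plan is to read off Corollary \ref{cor22} as the single instance $h(t)=1/t$ of Theorem \ref{thm7}, followed by a harmless rescaling. First I would confirm that the hypothesis matches: substituting $h(t)=1/t$ into the defining inequality \eqref{eqhGH} turns its right-hand side into $\frac{t(1-t)f(\alpha)f(\beta)}{tf(\alpha)+(1-t)f(\beta)}$, which is precisely the ${\rm{G_tH_{1/t}}}$-convexity condition recorded in the list after Remark \ref{cor}. Thus a ${\rm{G_tH_{1/t}}}$-convex $f$ is exactly a member of the ${\rm{G_tH_h}}$ family treated in Theorem \ref{thm7}, with $h$ taken to be the reciprocal function, and no new machinery is required.

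Next I would verify the structural assumption on $h$ and then specialize. For $h(t)=1/t$ and $s,t>0$ one has $\frac{1}{s+t}\le\frac1s+\frac1t$, so $h$ is subadditive; equivalently $h(s)+h(t)\ge h(s+t)$ and $h(1-s)+h(1-t)\ge h(2-s-t)$, which is exactly the orientation of the two coefficient estimates used in the proof of Theorem \ref{thm7} at the step that collapses $h(1-s)+h(1-t)$ and $h(s)+h(t)$ under the constraint $s+t=\tfrac32$. I would then evaluate the only two constants that survive in the conclusion, namely $h(1/2)=2$ and $h(3/2)=2/3$. Feeding these into inequality \eqref{eq3.3} gives
\[
\frac{1}{f(\sqrt{xz})}+\frac{1}{f(\sqrt{yz})}+\frac{1}{f(\sqrt{xy})}\ge 2\left[\frac{1}{f(x)}+\frac{1}{f(y)}+\frac{1}{f(z)}\right]+\frac{2/3}{f(\sqrt[3]{xyz})}.
\]
Multiplying both sides by $\tfrac32$ turns the bracketed coefficient into $3$ and clears the fraction on the last term into $\frac{1}{f(\sqrt[3]{xyz})}$, producing the asserted inequality for all $x,y,z\in I$.

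The part that needs care is the bookkeeping of inequality directions rather than any genuine estimate. Because ${\rm{G_tH_{1/t}}}$-convexity is phrased with a reversed ($\ge$) inequality for $f$ at the geometric mean, I would first pass to reciprocals of the three pointwise bounds (for $\sqrt{xz}$, $\sqrt{yz}$ and $\sqrt{xy}$) and then sum; the main obstacle is making sure the subadditivity of $1/t$ is invoked in the orientation $h(a)+h(b)\ge h(a+b)$, so that the summed estimate keeps the correct sense and, after imposing $s+t=\tfrac32$, telescopes the mixed coefficients into the clean constants $h(1/2)$ and $h(3/2)$. Once the orientation is pinned down, the remaining manipulations are the same reciprocate-and-sum computation already carried out in Theorem \ref{thm7}, and the concluding rescaling by $\tfrac32$ is purely cosmetic.
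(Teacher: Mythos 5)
Your route --- specialize Theorem \ref{thm7} at $h(t)=1/t$, read off $h(1/2)=2$ and $h(3/2)=2/3$, and rescale by $\tfrac{3}{2}$ --- is exactly the derivation the paper intends for this corollary, and that arithmetic is fine. The genuine gap is in the hypothesis-matching and direction bookkeeping, which you yourself flag as ``the part that needs care'' but then assert away rather than resolve. Substituting $h(t)=1/t$ into \eqref{eqhGH} yields $f\left(\alpha^t\beta^{1-t}\right)\le \frac{t(1-t)f(\alpha)f(\beta)}{tf(\alpha)+(1-t)f(\beta)}$, whereas the class ${\rm{G_tH_{1/t}}}$-convex defined in the list after Remark \ref{cor} requires the \emph{reversed} inequality ($\ge$): the right-hand sides agree, but the directions are opposite, and the paper adopts the reversed definition precisely because it proves at the start of Section 2 that no positive function satisfies the $\le$ version. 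So your first paragraph conflates two opposite conditions, only one of which is satisfiable by a map into $\left(0,\infty\right)$.

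This is not cosmetic, because neither reading gives a valid, non-vacuous proof. Under the satisfiable ($\ge$) reading, reciprocation gives \emph{upper} bounds, e.g. $\frac{1}{f\left(\sqrt{xz}\right)}\le \frac{1}{s\,f(u)}+\frac{1}{(1-s)f(z)}$ with $u=\sqrt[3]{xyz}$, so summing gives $\sum \frac{1}{f(\cdot)}\le A$; your subadditivity step $h(a)+h(b)\ge h(a+b)$ then shows $A\ge 2\sum \frac{1}{f}+\frac{2/3}{f(u)}$, and from $P\le A$ together with $A\ge R$ nothing relates $P$ to $R$ --- the chain does not close in the orientation you claim it does. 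Indeed the stated conclusion is false under this reading: any constant $f\equiv c>0$ is ${\rm{G_tH_{1/t}}}$-convex in the redefined sense (since $t(1-t)c\le c$), yet the corollary would assert $\frac{9}{2c}\ge\frac{10}{c}$. Under the literal \eqref{eqhGH} reading the chain does close (reciprocals give $\ge$ bounds and subadditivity preserves that sense, exactly as in the unwritten $\ge$ alternative of Theorem \ref{thm7}), but then the hypothesis class is empty and the corollary is vacuous. This tension is inherited from the paper itself; still, the third paragraph of your proposal, which claims the orientations can be made to ``keep the correct sense,'' is precisely the step at which the argument fails.
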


\begin{example}
	Let $f\left(x\right)=-\log\left(x\right)$, then $f$ is
	${\rm{G_tH_{1/t}}}$-convex for all $x>1$. Applying
	Corollary \ref{cor22}, then we get
	\begin{align*}
	\frac{3}{2}\left[{\frac{1}{{\log\left( {\sqrt {xz} } \right)}} +
		\frac{1}{{\log\left( {\sqrt {yz} } \right)}} +
		\frac{1}{{\log\left( {\sqrt {xy} } \right)}}}\right]\le
	3\left[ {\frac{1}{{\log\left( x \right)}} +
		\frac{1}{{\log\left( y \right)}} + \frac{1}{{\log\left( z
				\right)}}} \right] +  \frac{{1}}{{\log\left( {\sqrt[3]{{xyz}}}
			\right)}},
	\end{align*}
	for all $x,y,z > 1$.
\end{example}

\begin{corollary}
	\label{cor23} If $f : I \to \left(0,\infty\right)$  is
	$1$-${\rm{G_tH_t}}$-convex function, then
	\begin{align*}
	\frac{1}{{f\left( {\sqrt {xz} } \right)}} + \frac{1}{{f\left(
			{\sqrt {yz} } \right)}} + \frac{1}{{f\left( {\sqrt {xy} }
			\right)}} \ge  \left[ {\frac{1}{{f\left( x \right)}} +
		\frac{1}{{f\left( y \right)}} + \frac{1}{{f\left( z \right)}}}
	\right] + \frac{1}{{f\left( {\sqrt[3]{{xyz}}} \right)}},
	\end{align*}
	for all $x,y,z\in I$.
\end{corollary}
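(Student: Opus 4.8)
My plan is to deduce Corollary~\ref{cor23} directly from Theorem~\ref{thm7} by specializing the weight to the constant function $h\equiv 1$, in exactly the way Corollaries~\ref{cor21} and \ref{cor22} are read off from \eqref{eq3.3} through $h(t)=t$ and $h(t)=1/t$. For $h(t)\equiv 1$ one has $h(1/2)=h(3/2)=1$, so both weights on the right-hand side of \eqref{eq3.3} collapse to $1$; taking the $(\ge)$ alternative of that inequality yields
\begin{align*}
\frac{1}{f\left(\sqrt{xz}\right)}+\frac{1}{f\left(\sqrt{yz}\right)}+\frac{1}{f\left(\sqrt{xy}\right)}
\ge \frac{1}{f\left(x\right)}+\frac{1}{f\left(y\right)}+\frac{1}{f\left(z\right)}+\frac{1}{f\left(\sqrt[3]{xyz}\right)},
\end{align*}
which is the assertion, and, in contrast to Corollaries~\ref{cor21}--\ref{cor22}, no rescaling factor is needed since both weights equal $1$. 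The constant $h\equiv 1$ is subadditive, $h(s+t)=1\le 2=h(s)+h(t)$, which is precisely the additivity hypothesis attached to the $(\ge)$ branch of Theorem~\ref{thm7}.

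To present this as a self-contained derivation rather than a formal substitution, I would reproduce the three-term scheme from the proof of Theorem~\ref{thm7} at $h\equiv 1$. Using the geometric decompositions $\sqrt{xz}=(\sqrt[3]{xyz})^{s}z^{1-s}$ and $\sqrt{yz}=(\sqrt[3]{xyz})^{t}z^{1-t}$ with $s+t=\tfrac32$, together with $\sqrt{xy}=x^{1/2}y^{1/2}$, one writes the three instances of the defining inequality of the class, passes to reciprocals as in \eqref{eq3.4}--\eqref{eq3.6}, sums, and uses $s+t=\tfrac32$ to reassemble the constants $h(1/2)$ and $h(3/2)$ on the right. The degenerate alternative of the dichotomy, in which the logarithmic change of variables forces $x=y=z$, is disposed of exactly as in Theorem~\ref{thm7}.

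The step I expect to be the genuine obstacle is reconciling the orientation of the hypothesis with the $(\ge)$ direction of \eqref{eq3.3}. As explained at the opening of the section, the literal reading \eqref{eqhGH} with $h(1-t)=h(t)=1$ is vacuous among positive functions, which is exactly why $1$-${\rm{G_tH_t}}$-convexity is taken to mean the reinterpreted requirement $-f\in P\left(I;{\rm{G_t}},{\rm{H_1}}\right)$, i.e. $f\left(x^{t}y^{1-t}\right)\ge f(x)f(y)/\bigl(f(x)+f(y)\bigr)$. After reciprocation this furnishes estimates of the form $1/f\left(x^{t}y^{1-t}\right)\le 1/f(x)+1/f(y)$, and one must check carefully that, when fed through the summation step, these enter \eqref{eq3.3} in the orientation dictated by Theorem~\ref{thm7}; it is this direction-tracking at the degenerate weight, rather than any computation, that I would verify explicitly before declaring the corollary proved.
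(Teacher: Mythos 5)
Your main derivation is exactly the paper's own (implicit) proof: the paper gives no separate argument for Corollary \ref{cor23}, which is meant to be read off from the $(\ge)$ branch of Theorem \ref{thm7} by taking $h\equiv 1$, noting that $h\equiv1$ is subadditive and that $h(1/2)=h(3/2)=1$; your re-run of the three-term scheme at this weight is the proof of Theorem \ref{thm7} verbatim, so as a formal reduction your proposal matches the paper.

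However, the verification you defer in your last paragraph is not a formality --- it fails, and you cannot simultaneously keep the reinterpreted hypothesis and the claimed inequality. If $1$-${\rm{G_tH_t}}$-convexity is read as $-f\in P\left(I;{\rm{G_t}},{\rm{H_1}}\right)$, i.e. $f\left(x^{t}y^{1-t}\right)\ge f(x)f(y)/\left(f(x)+f(y)\right)$, then reciprocation gives
\begin{align*}
\frac{1}{f\left(\sqrt{xz}\right)}\le\frac{1}{f\left(\sqrt[3]{xyz}\right)}+\frac{1}{f\left(z\right)},\qquad
\frac{1}{f\left(\sqrt{yz}\right)}\le\frac{1}{f\left(\sqrt[3]{xyz}\right)}+\frac{1}{f\left(z\right)},\qquad
\frac{1}{f\left(\sqrt{xy}\right)}\le\frac{1}{f\left(x\right)}+\frac{1}{f\left(y\right)},
\end{align*}
and summing produces an \emph{upper} bound $\frac{1}{f(x)}+\frac{1}{f(y)}+\frac{2}{f(z)}+\frac{2}{f\left(\sqrt[3]{xyz}\right)}$ for the left-hand side: the opposite orientation to the corollary, with weights $2$ that the relation $s+t=\frac{3}{2}$ cannot remove, since an upper bound can never be converted into the asserted lower bound. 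Worse, the statement is simply false under this reading: $f\equiv c>0$ satisfies $c\ge c^{2}/(2c)$, yet the corollary would require $3/c\ge 4/c$. Hence the corollary follows from Theorem \ref{thm7} only under the literal reading of \eqref{eqhGH} with $h\equiv1$ --- and then, as the paper itself observes (Remark \ref{cor}), no positive function satisfies that condition, so for the stated domain $f:I\to\left(0,\infty\right)$ the implication is vacuously true. (The paper's follow-up example takes $f=-\log<0$, but for negative $f$ the subadditivity step $\left[h(s)+h(t)\right]/f\left(\sqrt[3]{xyz}\right)\ge h\left(s+t\right)/f\left(\sqrt[3]{xyz}\right)$ in the proof of Theorem \ref{thm7} reverses, so that case is not covered either.) In short: your substitution reproduces the paper's argument and is sound as a formal specialization, but your instinct about direction-tracking at the degenerate weight identifies a genuine defect --- the only reading under which the reduction works is the literal one, at the price of vacuity, and this is a flaw of the corollary as stated rather than of your reduction.
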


\begin{example}
	Let $f\left(x\right)=-\log\left(x\right)$, then $f$ is
	${\rm{G_tH_1}}$-convex for all $x>1$. Applying Corollary
	\ref{cor23}, then we get
	\begin{align*}
	\frac{1}{{\log\left( {\sqrt {xz} } \right)}} +
	\frac{1}{{\log\left( {\sqrt {yz} } \right)}} +
	\frac{1}{{\log\left( {\sqrt {xy} } \right)}} \le  \left[
	{\frac{1}{{\log\left( x \right)}} + \frac{1}{{\log\left( y
				\right)}} + \frac{1}{{\log\left( z \right)}}} \right] +
	\frac{1}{{\log\left( {\sqrt[3]{{xyz}}} \right)}},
	\end{align*}
	for all $x,y,z >1$.
\end{example}

\section{Popoviciu  inequalities for $h$-${\rm{HN}}$-convex
	functions}

\subsection{The case when $f$ is  ${\rm{H_tA_h}}$-convex}

\begin{theorem}
	\label{thm8} Let $h: I\to \left(0,\infty\right)$ be a non-negative
	super(sub)additive.
	If $f : I \to \left(0,\infty\right)$  is
	${\rm{H_tA_h}}$-convex (concave) function, then
	\begin{align}
	&f\left( {\frac{{2xz}}{{x + z}}} \right) + f\left(
	{\frac{{2yz}}{{y + z}}} \right) + f\left( {\frac{{2xy}}{{x + y}}}
	\right)\nonumber
	\\
	&\le \,(\ge)\,h\left( {3/2} \right)f\left( {\frac{{3xyz}}{{xy + yz
				+ xz}}} \right) + h\left( {1/2} \right)\left[ {f\left( x \right) +
		f\left( y \right) + f\left( z \right)} \right], \label{eq4.1}
	\end{align}
	for all $x,y,z\in I$.
\end{theorem}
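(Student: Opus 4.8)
The plan is to mirror the proof of Theorem \ref{thm5}, replacing the geometric decomposition with the harmonic one and exploiting the fact that harmonic means linearize under reciprocation. Recall from \eqref{eqhHA} that ${\rm{H_tA_h}}$-convexity of $f$ means
\[
f\left(\frac{\alpha\beta}{t\alpha+(1-t)\beta}\right)\le h(1-t)f(\alpha)+h(t)f(\beta),\qquad 0\le t\le1,
\]
for all $\alpha,\beta\in I$. Write $M:=\frac{3xyz}{xy+yz+xz}$ for the harmonic mean of $x,y,z$ that appears on the right of \eqref{eq4.1}.

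First I would assume without loss of generality that $x\le y\le z$ and treat the case $y\le\frac{2xz}{x+z}$ (the complementary case following by symmetry, since \eqref{eq4.1} is symmetric in $x,y,z$). Under this assumption one checks the chains $M\le\frac{2xz}{x+z}\le z$ and $M\le\frac{2yz}{y+z}\le z$, the latter holding for every $x\le y\le z$ because the harmonic mean of $y,z$ already dominates $x$. Since the weighted harmonic mean $\frac{Mz}{\lambda M+(1-\lambda)z}$ increases monotonically from $M$ to $z$ as $\lambda$ runs over $[0,1]$, there exist $s,t\in[0,1]$ with
\[
\frac{2xz}{x+z}=\frac{Mz}{sM+(1-s)z},\qquad \frac{2yz}{y+z}=\frac{Mz}{tM+(1-t)z}.
\]

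The key step is to take reciprocals, which converts these into the \emph{additive} relations
\[
\frac{1}{2}\left(\frac{1}{x}+\frac{1}{z}\right)=\frac{s}{z}+\frac{1-s}{M},\qquad \frac{1}{2}\left(\frac{1}{y}+\frac{1}{z}\right)=\frac{t}{z}+\frac{1-t}{M};
\]
using $\frac{1}{M}=\frac{1}{3}\left(\frac{1}{x}+\frac{1}{y}+\frac{1}{z}\right)$, adding the two relations and clearing denominators yields the identity $(2xy-xz-yz)(2(s+t)-1)=0$. If the first factor vanishes, then $z=\frac{2xy}{x+y}\le y\le z$ forces $x=y=z$ and the inequality is trivial; otherwise $s+t=\tfrac{1}{2}$. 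This is the harmonic analogue of the identity $(x+y-2z)(s+t-\tfrac{3}{2})=0$ from Theorem \ref{thm1}, with the essential difference that the relevant sum is now $\tfrac{1}{2}$ rather than $\tfrac{3}{2}$.

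Finally, apply the defining inequality to each term: with parameters $s$ and $t$,
\[
f\left(\frac{2xz}{x+z}\right)\le h(1-s)f(M)+h(s)f(z),\qquad f\left(\frac{2yz}{y+z}\right)\le h(1-t)f(M)+h(t)f(z),
\]
while parameter $\tfrac{1}{2}$ applied to the pair $(x,y)$ gives $f\left(\frac{2xy}{x+y}\right)\le h(\tfrac{1}{2})\left[f(x)+f(y)\right]$. Summing the three estimates and invoking superadditivity of $h$ with $s+t=\tfrac{1}{2}$, namely $h(s)+h(t)\le h(\tfrac{1}{2})$ and $h(1-s)+h(1-t)\le h(\tfrac{3}{2})$, together with $f>0$, bounds the total by $h(\tfrac{3}{2})f(M)+h(\tfrac{1}{2})f(z)+h(\tfrac{1}{2})[f(x)+f(y)]=h(\tfrac{3}{2})f(M)+h(\tfrac{1}{2})[f(x)+f(y)+f(z)]$, which is exactly \eqref{eq4.1}. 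The concave/subadditive case is identical with all inequalities reversed. The main obstacle is the reciprocation step: one must confirm that the nontrivial factor of the identity is $2(s+t)-1$, so that $s+t=\tfrac{1}{2}$; this precise value is what allows superadditivity to redistribute the weights into $h(\tfrac{3}{2})$ and $h(\tfrac{1}{2})$ and collapse the three $f(z)$-type contributions into the stated right-hand side.
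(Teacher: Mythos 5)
Your proof is correct and follows essentially the same route as the paper's: the same weighted-harmonic-mean parametrization $\frac{Mz}{sM+(1-s)z}$ with anchor $z$, the same reciprocation step yielding the dichotomy $x=y=z$ or $s+t=\tfrac{1}{2}$, and the same final use of superadditivity via $h(s)+h(t)\le h\left(\tfrac{1}{2}\right)$ and $h(1-s)+h(1-t)\le h\left(\tfrac{3}{2}\right)$. If anything, your explicit identity $(2xy-xz-yz)\left(2(s+t)-1\right)=0$ and your remark on the complementary case are cleaner than the paper's somewhat garbled rendering of the same computation.
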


\begin{proof}
	$f$ is ${\rm{H_tA_h}}$-convex iff the inequality
	\begin{align*}
	f\left( {\frac{{\alpha \beta }}{{t\alpha  + \left( {1 - t}
				\right)\beta }}} \right) \le h\left( {1-t} \right)f\left( \alpha
	\right) + h\left( { t} \right)f\left( \beta  \right), \qquad 0\le
	t\le 1,
	\end{align*}
	holds for all $\alpha,\beta \in I$. Assume that $x\le y \le z$. If
	$ y \le \frac{{3xyz}}{{xy + yz + xz}}$, then
	\begin{align*}
	\frac{{3xyz}}{{xy + yz + xz}} \le \frac{{2xz}}{{x + z}} \le z
	\,\,\text{and}\,\,  \frac{{3xyz}}{{xy + yz + xz}} \le
	\frac{{2yz}}{{y + z}} \le z,
	\end{align*}
	so that there exist two numbers $s,t \in \left[0,1\right]$
	satisfying
	\begin{align*}
	\frac{{2xz}}{{x + z}} = \frac{{{\textstyle{{3xyz} \over {xy + yz +
						xz}}} \cdot z}}{{s{\textstyle{{3xyz} \over {xy + yz + xz}}} +
			\left( {1 - s} \right)z}},
	\end{align*}
	and
	\begin{align*}
	\frac{{2yz}}{{y + z}} = \frac{{{\textstyle{{3xyz} \over {xy + yz +
						xz}}} \cdot z}}{{t{\textstyle{{3xyz} \over {xy + yz + xz}}} +
			\left( {1 - t} \right)z}}.
	\end{align*}
	For simplicity set, $u=\frac{{3xyz}}{{xy + yz + xz}}$, summing the
	reciprocal of the previous two equations
	\begin{align*}
	\frac{{x + z}}{{2xz}} + \frac{{y + z}}{{2yz}} = \frac{{\left( {s +
				t} \right){\textstyle{{3xyz} \over {xy + yz + xz}}} + \left( {2 -
				s - t} \right)z}}{{{\textstyle{{3xyz} \over {xy + yz + xz}}} \cdot
			z}} = \frac{{3\left( {s + t} \right)u + \left( {2 - s - t}
			\right)z}}{{3u \cdot z}}.
	\end{align*}
	Simplifying the above equation and reverse it back to the original
	form (taking the reciprocal again), we get
	\begin{align*}
	\frac{u}{{u + z}} = \frac{u}{{2\left( {s + t} \right)u +
			\frac{2}{3}\left( {2 - s - t} \right)z}},
	\end{align*}
	since
	$y,x,z>0$, this yields that $x=y=z $
	and thus Popoviciu's inequality holds, or $s+t=\frac{1}{2}$ and in
	this case since $f$ is ${\rm{H_tA_h}}$-convex, we have
	\begin{align*}
	f\left( {\frac{{2xz}}{{x + z}}} \right) &= f\left( {\frac{{{\textstyle{{3xyz} \over {xy + yz + xz}}} \cdot z}}{{s{\textstyle{{3xyz} \over {xy + yz + xz}}} + \left( {1 - s} \right)z}}} \right) \le h\left( s \right)f\left( z \right) + h\left( {1 - s} \right)f\left( {\frac{{3xyz}}{{xy + yz + xz}}} \right), \\
	f\left( {\frac{{2yz}}{{y + z}}} \right) &= f\left( {\frac{{{\textstyle{{3xyz} \over {xy + yz + xz}}} \cdot z}}{{t{\textstyle{{3xyz} \over {xy + yz + xz}}} + \left( {1 - t} \right)z}}} \right) \le h\left( t \right)f\left( z \right) + h\left( {1 - t} \right)f\left( {\frac{{3xyz}}{{xy + yz + xz}}} \right), \\
	f\left( {\frac{{2xy}}{{x + y}}} \right) &\le h\left( {1/2} \right)\left[ {f\left( x \right) + f\left( y\right)}
	\right].
	\end{align*}
	Summing up these inequalities we get
	\begin{align*}
	&f\left( {\frac{{2xz}}{{x + z}}} \right) + f\left( {\frac{{2yz}}{{y + z}}} \right) + f\left( {\frac{{2xy}}{{x + y}}} \right) \\
	&\le \left[ {h\left( s \right) + h\left( t \right)} \right]f\left( z \right) + \left[ {h\left( {1 - s} \right) + h\left( {1 - t} \right)} \right]f\left( {\frac{{3xyz}}{{xy + yz + xz}}} \right) + h\left( {1/2} \right)\left[ {f\left( x \right) + f\left( y \right)} \right] \\
	&\le h\left( {s + t} \right)f\left( z \right) + h\left( {2 - s - t} \right)f\left( {\frac{{3xyz}}{{xy + yz + xz}}} \right) + h\left( {1/2} \right)\left[ {f\left( x \right) + f\left( y \right)} \right] \\
	&= h\left( {3/2} \right)f\left( {\frac{{3xyz}}{{xy + yz + xz}}} \right) + h\left( {1/2} \right)\left[ {f\left( x \right) + f\left( y \right) + f\left( z \right)}
	\right],
	\end{align*}
	which proves the inequality in \eqref{eq4.1}.
\end{proof}

\begin{remark}
Setting $z=y$ 	in \eqref{eq4.1}, then we get
	\begin{align*}
	2f\left( {\frac{{2xy}}{{x + y}}} \right) + f\left( {y} \right) \le
	\,(\ge)\,h\left( {3/2} \right)f\left( {\frac{{3xy}}{{2x + y}}}
	\right) + h\left( {1/2} \right)\left[ {f\left( x \right) +
		2f\left( y \right)} \right],
	\end{align*}
	for all $x,y\in I$.
\end{remark}

\begin{corollary}
	\label{cor24} If $f : I \to \left(0,\infty\right)$  is
	${\rm{H_tA_t}}$-convex (concave) function, then
	\begin{align*}
	\frac{2}{3}\left[{f\left( {\frac{{2xz}}{{x + z}}} \right) +
		f\left( {\frac{{2yz}}{{y + z}}} \right) + f\left( {\frac{{2xy}}{{x
					+ y}}} \right)}\right]
	\le \,(\ge)\, f\left( {\frac{{3xyz}}{{xy + yz + xz}}} \right) +
	\frac{f\left( x \right) + f\left( y \right) + f\left( z
		\right)}{3},
	\end{align*}
	for all $x,y,z\in I$. The equality holds with
	$f\left(x\right)=\frac{1}{x}$, $x> 0$.
\end{corollary}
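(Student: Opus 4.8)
The plan is to obtain Corollary \ref{cor24} as the special case $h(t)=t$ of Theorem \ref{thm8}. The first thing I would record is that the identity weight $h(t)=t$ is additive, and hence \emph{simultaneously} superadditive and subadditive. This matters because the two branches of Theorem \ref{thm8} are tied to these two hypotheses: the ``$\le$'' branch uses superadditivity of $h$ together with ${\rm{H_tA_h}}$-convexity, while the ``$\ge$'' branch uses subadditivity together with ${\rm{H_tA_h}}$-concavity. Since the linear $h$ satisfies both additivity conditions at once, both readings of the corollary's two-sided statement are licensed by Theorem \ref{thm8} with this single choice of weight. With $h(t)=t$ one also has $h(3/2)=3/2$ and $h(1/2)=1/2$.

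Next I would substitute these values directly into \eqref{eq4.1}. The right-hand side becomes
\[
\frac{3}{2}\,f\!\left(\frac{3xyz}{xy+yz+xz}\right)+\frac{1}{2}\bigl[f(x)+f(y)+f(z)\bigr],
\]
and multiplying the whole inequality through by $2/3$ yields exactly the asserted relation, because $\frac{2}{3}\cdot\frac{3}{2}=1$ and $\frac{2}{3}\cdot\frac{1}{2}=\frac{1}{3}$. No analytic step is delicate here; the only bookkeeping point is that the three harmonic means $\frac{2xz}{x+z}$, $\frac{2yz}{y+z}$, $\frac{2xy}{x+y}$ and the weighted harmonic mean $\frac{3xyz}{xy+yz+xz}$ all lie between $\min\{x,y,z\}$ and $\max\{x,y,z\}$, so they remain in $I$ whenever $x,y,z\in I$, and the hypotheses of Theorem \ref{thm8} are genuinely met.

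For the equality assertion I would test $f(x)=1/x$ in the defining inequality \eqref{eqhHA} with $h(t)=t$. A one-line computation gives
\[
f\!\left(\frac{\alpha\beta}{t\alpha+(1-t)\beta}\right)=\frac{t\alpha+(1-t)\beta}{\alpha\beta}=\frac{1-t}{\alpha}+\frac{t}{\beta}=(1-t)f(\alpha)+t\,f(\beta),
\]
so $f(x)=1/x$ is in fact ${\rm{H_tA_t}}$-affine, i.e.\ it satisfies \eqref{eqhHA} with equality for every $t$. Tracing this through the proof of Theorem \ref{thm8}, each of the three pointwise estimates there becomes an equality, and the additivity of $h(t)=t$ turns the single super/subadditivity estimate into an equality as well. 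Hence the entire chain collapses to equalities and both \eqref{eq4.1} and the rescaled corollary hold with equality for $f(x)=1/x$.

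The ``hard part,'' such as it is, is purely organizational rather than mathematical: one must justify the two-sided ``$\le(\ge)$'' reading by invoking that the linear weight is simultaneously super- and subadditive, and confirm that the arguments of $f$ never leave $I$. Once these are in place, Corollary \ref{cor24} is nothing more than an immediate rescaling of Theorem \ref{thm8}, and the equality case follows from the affinity of $x\mapsto 1/x$ under ${\rm{H_tA_t}}$.
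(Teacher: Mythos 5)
Your proposal is correct and matches the paper's (implicit) route: Corollary \ref{cor24} is obtained exactly by specializing Theorem \ref{thm8} to the additive weight $h(t)=t$, which is simultaneously super- and subadditive, so that $h(3/2)=3/2$, $h(1/2)=1/2$, and multiplication by $2/3$ gives the stated inequality in both directions. Your verification that $f(x)=1/x$ satisfies \eqref{eqhHA} with equality (so the whole chain in the theorem's proof collapses to equalities) correctly settles the equality claim, which the paper asserts without proof.
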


\begin{example}
	Let $f\left(x\right)=\arctan\left(x\right)$, then  $f$ is
	${\rm{H_tA_t}}$-convex on $\left(0,\infty\right)$. Applying
	Corollary \ref{cor24}, then we get
	\begin{multline*}
	\frac{2}{3}\left[{\arctan\left( {\frac{{2xz}}{{x + z}}} \right) +
		\arctan\left( {\frac{{2yz}}{{y + z}}} \right) + \arctan\left(
		{\frac{{2xy}}{{x + y}}} \right)}\right]
	\\
	\le \arctan\left( {\frac{{3xyz}}{{xy + yz + xz}}} \right) +
	\frac{\arctan\left( x \right) + \arctan\left( y \right) +
		\arctan\left( z \right)}{3},
	\end{multline*}
\end{example}

\begin{corollary}
	\label{cor25}If $f : I \to \left(0,\infty\right)$  is
	${\rm{H_tA_{1/t}}}$-concave function, then
	\begin{align*}
	\frac{3}{2}\left[{f\left( {\frac{{2xz}}{{x + z}}} \right) +
		f\left( {\frac{{2yz}}{{y + z}}} \right) + f\left( {\frac{{2xy}}{{x
					+ y}}} \right)}\right]
	\ge f\left( {\frac{{3xyz}}{{xy + yz + xz}}} \right) + 3\left[
	{f\left( x \right) + f\left( y \right) + f\left( z \right)}
	\right],
	\end{align*}
	for all $x,y,z\in I$.
\end{corollary}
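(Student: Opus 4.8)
The plan is to derive Corollary~\ref{cor25} as the instance $h(t)=1/t$ of Theorem~\ref{thm8} in its concave (reversed) form. First I would recall, from the discussion preceding this section, that a function $f:I\to\mathbb{R}$ is ${\rm{H_tA_{1/t}}}$-concave exactly when
\begin{align*}
f\left( {\frac{{\alpha \beta }}{{t\alpha  + \left( {1 - t} \right)\beta }}} \right) \ge \frac{1}{{1 - t}}f\left( \alpha  \right) + \frac{1}{t}f\left( \beta  \right),
\end{align*}
for all $\alpha,\beta\in I$ and $t\in(0,1)$. This is precisely the reversed form of the defining inequality for the ${\rm{H_tA_h}}$ class once one takes $h(t)=1/t$, so that the ${\rm{H_tA_{1/t}}}$-concavity hypothesis of the corollary coincides with the concave hypothesis of Theorem~\ref{thm8} for this particular weight.

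Next I would verify that this weight meets the condition attached to the concave conclusion of Theorem~\ref{thm8}. The function $h(t)=t^{-1}$ is the case $k=-1$ in the classification recorded earlier in the paper, hence it is a nonnegative \emph{subadditive} function, and subadditivity is exactly the parenthetical hypothesis paired with the reversed inequality in Theorem~\ref{thm8}. Consequently the concave version of \eqref{eq4.1} applies verbatim and yields
\begin{align*}
&f\left( {\frac{{2xz}}{{x + z}}} \right) + f\left( {\frac{{2yz}}{{y + z}}} \right) + f\left( {\frac{{2xy}}{{x + y}}} \right)\\
&\qquad\ge h\left( {3/2} \right)f\left( {\frac{{3xyz}}{{xy + yz + xz}}} \right) + h\left( {1/2} \right)\left[ {f\left( x \right) + f\left( y \right) + f\left( z \right)} \right],
\end{align*}
for all $x,y,z\in I$.

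The final step is purely arithmetic: substituting $h(3/2)=2/3$ and $h(1/2)=2$, and then multiplying both sides by $3/2$, rescales the left-hand coefficient to $3/2$, turns the coefficient of $f\left( {\frac{3xyz}{xy+yz+xz}} \right)$ into $1$, and turns the coefficient of $f(x)+f(y)+f(z)$ into $3$, giving exactly the asserted inequality. I do not expect any genuine obstacle here: the heart of the argument, namely the construction of $s,t\in[0,1]$ with $s+t=1/2$ and the telescoping use of (sub)additivity, has already been carried out inside Theorem~\ref{thm8}, so the only points to confirm are the identification of ${\rm{H_tA_{1/t}}}$-concavity with the $h(t)=1/t$ case and the subadditivity of $t\mapsto 1/t$, both of which are immediate.
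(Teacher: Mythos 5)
Your proposal is correct and is exactly the derivation the paper intends: Corollary \ref{cor25} is the specialization of Theorem \ref{thm8} to $h(t)=1/t$, using that $t\mapsto t^{-1}$ is subadditive (the $k=-1$ case of the paper's own classification) so that the concave, reversed branch of \eqref{eq4.1} applies, followed by substituting $h\left(3/2\right)=2/3$, $h\left(1/2\right)=2$ and multiplying through by $3/2$. The paper prints no separate proof of this corollary, and your reconstruction -- including the identification of ${\rm{H_tA_{1/t}}}$-concavity with the reversed $h(t)=1/t$ instance of \eqref{eqhHA} -- coincides with its implicit argument.
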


\begin{example}
	Let $f\left(x\right)=x^2$, therefore $f$ is
	${\rm{H_tA_{1/t}}}$-concave on $x<0$. Applying Corollary
	\ref{cor25}, then we get
	\begin{align*}
	\left( {\frac{{xz}}{{x + z}}} \right)^2  +  \left( {\frac{{yz}}{{y
				+ z}}} \right)^2 +\left( {\frac{{xy}}{{x + y}}} \right)^2 \ge
	\frac{3}{2}\left( {\frac{{ xyz}}{{xy + yz + xz}}} \right)^2 +
	\frac{1}{18}\left( {x^2+y^2+z^2 } \right),
	\end{align*}
	for all $x,y,z<0$.
\end{example}
\begin{corollary}
	\label{cor26}If $f : I \to \left(0,\infty\right)$  is
	${\rm{H_tA_1}}$-concave function, then
	\begin{align*}
	&f\left( {\frac{{2xz}}{{x + z}}} \right) + f\left(
	{\frac{{2yz}}{{y + z}}} \right) + f\left( {\frac{{2xy}}{{x + y}}}
	\right)\nonumber
	\\
	&\ge f\left( {\frac{{3xyz}}{{xy + yz + xz}}} \right) +
	\left[ {f\left( x \right) + f\left( y \right)
		+ f\left( z \right)} \right],
	\end{align*}
	for all $x,y,z\in I$.
\end{corollary}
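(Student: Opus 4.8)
The plan is to obtain Corollary~\ref{cor26} as the $h \equiv 1$ specialization of the concave branch of Theorem~\ref{thm8}, so that essentially no new work is required. First I would recall that, under the extended definition introduced before the statement of Theorem~\ref{thm8}, the assertion that $f$ is ${\rm{H_tA_1}}$-concave means precisely
\begin{align*}
f\left( {\frac{{\alpha \beta }}{{t\alpha + \left( {1 - t} \right)\beta }}} \right) \ge f\left( \alpha \right) + f\left( \beta \right)
\end{align*}
for all $\alpha,\beta \in I$ and $t\in(0,1)$. On the other hand, ${\rm{H_tA_h}}$-concavity reads
\begin{align*}
f\left( {\frac{{\alpha \beta }}{{t\alpha + \left( {1 - t} \right)\beta }}} \right) \ge h\left( {1 - t} \right) f\left( \alpha \right) + h\left( t \right) f\left( \beta \right),
\end{align*}
and choosing the constant function $h \equiv 1$ makes $h(1-t) = h(t) = 1$, so the two inequalities coincide. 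Hence an ${\rm{H_tA_1}}$-concave function is exactly an ${\rm{H_tA_h}}$-concave function with $h \equiv 1$.

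Next I would verify the hypothesis needed for the concave (reverse) form of Theorem~\ref{thm8}, namely that $h$ is subadditive. For the constant function this is immediate, since $h(s+t) = 1 \le 2 = h(s) + h(t)$ for all admissible $s,t$; thus $h\equiv 1$ is subadditive and the $\ge$ version of \eqref{eq4.1} is applicable.

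Finally I would apply that reverse inequality and substitute the two values $h(3/2) = 1$ and $h(1/2) = 1$. This collapses the right-hand side of \eqref{eq4.1} to
\begin{align*}
f\left( {\frac{{3xyz}}{{xy + yz + xz}}} \right) + \left[ f\left( x \right) + f\left( y \right) + f\left( z \right) \right],
\end{align*}
which is exactly the bound claimed in the corollary. The only delicate points are bookkeeping ones: correctly matching the $P$-type class ${\rm{H_tA_1}}$ with the $h \equiv 1$ member of the ${\rm{H_tA_h}}$ family, and confirming the (trivial) subadditivity of the constant $h$. I therefore do not expect any genuine obstacle; the entire content of the corollary is this specialization, exactly paralleling how Corollaries~\ref{cor24} and~\ref{cor25} arise from the choices $h(t)=t$ and $h(t)=1/t$.
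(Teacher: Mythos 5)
Your proposal is correct and is exactly the paper's intended argument: Corollary~\ref{cor26} is the specialization of the concave (subadditive) branch of Theorem~\ref{thm8} to the constant function $h\equiv 1$, just as Corollaries~\ref{cor24} and~\ref{cor25} come from $h(t)=t$ and $h(t)=1/t$. Your two bookkeeping checks --- that ${\rm{H_tA_1}}$-concavity coincides with ${\rm{H_tA_h}}$-concavity for $h\equiv 1$, and that the constant function $1$ is subadditive (it is $h(t)=t^{k}$ with $k=0$ in the paper's own list) --- are precisely what the paper leaves implicit.
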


\begin{example}
	Let $f\left(x\right)=x^2$, therefore  $f$ is
	${\rm{H_tA_1}}$-concave on $\left(-\infty,0\right)$. Applying
	Corollary \ref{cor26}, then we get
	\begin{align*}
	\left( {\frac{{xz}}{{x + z}}} \right)^2 + \left( {\frac{{yz}}{{y
				+ z}}} \right)^2 + \left( {\frac{{xy}}{{x + y}}} \right)^2 \ge
	\frac{9}{4}\left[{\frac{x^2+y^2+z^2}{9}+\left( {\frac{{xyz}}{{xy +
					yz + xz}}} \right)^2 } \right],
	\end{align*}
	for all $x,y,z<0$.
\end{example}

\begin{corollary}
	\label{cor27}In Theorem \ref{thm8}.
	\begin{enumerate}
		\item If $f : I \to \left(0,\infty\right)$ is an
		${\rm{H_tA_h}}$-convex and superadditive, then
		\begin{align*}
		&2\left[{f\left( {\frac{{xz}}{{x + z}} } \right)+f\left( {
				\frac{{yz}}{{y + z}} } \right)+f\left( { \frac{{xy}}{{x + y}}}
			\right)}\right]
		\\
		& \le f\left( {\frac{{2xz}}{{x + z}}} \right) + f\left(
		{\frac{{2yz}}{{y + z}}} \right) + f\left( {\frac{{2xy}}{{x + y}}}
		\right)\nonumber
		\\
		&\le  h\left( {3/2} \right)f\left( {\frac{{3xyz}}{{xy + yz + xz}}}
		\right) + h\left( {1/2} \right)\left[ {f\left( x \right) + f\left(
			y \right) + f\left( z \right)} \right]
		\\
		&\le  h\left( {3/2} \right)f\left( {\frac{{3xyz}}{{xy + yz + xz}}}
		\right) + h\left( {1/2} \right)f\left( x+y+z \right),
		\end{align*}
		for all $x,y,z\in I$. If $f$ is an  ${\rm{H_tA_h}}$-concave and
		subadditive, then the inequality is reversed.

		\item If $f : I \to \left(0,\infty\right)$ is an
		${\rm{H_tA_h}}$-convex  and subadditive, then
		\begin{align*}
		&f\left( {\frac{{2xz}}{{x + z}}+\frac{{2yz}}{{y +
					z}}+\frac{{2xy}}{{x + y}}} \right)
		\\
		&\le f\left( {\frac{{2xz}}{{x + z}}} \right) + f\left(
		{\frac{{2yz}}{{y + z}}} \right) + f\left( {\frac{{2xy}}{{x + y}}}
		\right)
		\\
		&\le h\left( {3/2} \right)f\left( {\frac{{3xyz}}{{xy + yz + xz}}}
		\right) + h\left( {1/2} \right)\left[ {f\left( x \right) + f\left(
			y \right) + f\left( z \right)} \right]
		\\
		&\le 3h\left( {3/2} \right)f\left( {\frac{{xyz}}{{xy + yz + xz}}}
		\right)   + h\left( {1/2} \right)\left[ {f\left( x \right) +
			f\left( y \right) + f\left( z \right)} \right],
		\end{align*}
		for all $x,y,z\in I$. If $f$ is an  ${\rm{H_tA_h}}$-concave and
		superadditive, then the inequality is reversed.
	\end{enumerate}
\end{corollary}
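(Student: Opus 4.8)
The plan is to treat Theorem \ref{thm8} as the central estimate and simply flank it on both sides using the (super- or sub-)additivity of $f$ itself. The whole corollary is therefore a sandwich: the middle inequality in each displayed chain is exactly \eqref{eq4.1}, so only the outer two inequalities require new work. The one thing to notice is a pair of elementary identities relating the harmonic-type arguments to integer multiples of simpler quantities, namely
\begin{align*}
\frac{2xz}{x+z}=\frac{xz}{x+z}+\frac{xz}{x+z}, \qquad \frac{3xyz}{xy+yz+xz}=3\cdot\frac{xyz}{xy+yz+xz}.
\end{align*}

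For part (1), assume $f$ is ${\rm{H_tA_h}}$-convex and superadditive. For the left inequality I would apply superadditivity to the doubling identity above: since $\frac{2xz}{x+z}$ is the sum of two equal copies of $\frac{xz}{x+z}$, superadditivity gives $f\left(\frac{2xz}{x+z}\right)\ge 2f\left(\frac{xz}{x+z}\right)$, and likewise for the $yz$ and $xy$ terms; summing the three yields the left inequality. For the right inequality I would iterate superadditivity twice, $f(x)+f(y)\le f(x+y)$ and then $f(x+y)+f(z)\le f(x+y+z)$, to obtain $f(x)+f(y)+f(z)\le f(x+y+z)$, and multiply through by the positive constant $h(1/2)$. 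The middle inequality is \eqref{eq4.1}.

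For part (2), assume instead that $f$ is ${\rm{H_tA_h}}$-convex and subadditive. The left inequality is immediate from subadditivity applied to the three harmonic means as summands, $f(a+b+c)\le f(a)+f(b)+f(c)$ with $a=\frac{2xz}{x+z}$, $b=\frac{2yz}{y+z}$, $c=\frac{2xy}{x+y}$. For the right inequality I would use the second identity above: writing $\frac{3xyz}{xy+yz+xz}$ as a sum of three equal copies of $\frac{xyz}{xy+yz+xz}$, subadditivity gives $f\left(\frac{3xyz}{xy+yz+xz}\right)\le 3f\left(\frac{xyz}{xy+yz+xz}\right)$; multiplying by the positive factor $h(3/2)$ and leaving the $h(1/2)$-term untouched produces the claimed bound. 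Again the middle inequality is just \eqref{eq4.1}.

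The reversed (concave/subadditive and concave/superadditive) cases follow by running exactly the same steps with every inequality sign flipped, using the reverse form of Theorem \ref{thm8} in the middle. There is no genuine obstacle here: the only points to check are the two multiplicative identities displayed above and the fact that $h(1/2)$ and $h(3/2)$ are strictly positive, so that multiplying an inequality by them preserves its direction. That positivity is guaranteed by the standing hypothesis $h:I\to(0,\infty)$.
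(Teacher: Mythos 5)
Your proposal is correct and is exactly the intended derivation: the paper states Corollary \ref{cor27} without proof, as an immediate consequence of sandwiching inequality \eqref{eq4.1} of Theorem \ref{thm8} between the two outer bounds obtained from the super(sub)additivity of $f$ itself, via $f\bigl(2u\bigr)\gtrless 2f\left(u\right)$, $f\left(x\right)+f\left(y\right)+f\left(z\right)\lessgtr f\left(x+y+z\right)$ and $f\bigl(3u\bigr)\lessgtr 3f\left(u\right)$, with $h\left(1/2\right),h\left(3/2\right)>0$ preserving directions. Your handling of the reversed (concave) cases likewise matches the paper's convention of invoking the reversed form of Theorem \ref{thm8} together with the flipped additivity assumption.
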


\subsection{The case when $f$ is  ${\rm{H_tG_h}}$-convex}

\begin{theorem}
	\label{thm9} Let $h: I\to \left(0,\infty\right)$ be a non-negative
	super(sub)additive.
	If $f : I\to \left(0,\infty\right)$ is
	${\rm{H_tG_h}}$-convex (concave) function, then
	\begin{align}
	&f\left( {\frac{{2xz}}{{x + z}}} \right)  f\left( {\frac{{2yz}}{{y
				+ z}}} \right)   f\left( {\frac{{2xy}}{{x + y}}} \right)\nonumber
	\\
	&\le \,(\ge)\,\left[ {f\left( {\frac{{3xyz}}{{xy + yz + xz}}}
		\right)} \right]^{h\left( {3/2} \right)} \left[ {f\left( x
		\right)f\left( y \right)f\left( z \right)} \right]^{h\left( {1/2}
		\right)}, \label{eq4.2}
	\end{align}
	for all $x,y,z\in I$.
\end{theorem}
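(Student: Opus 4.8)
The plan is to mirror the proofs of Theorems~\ref{thm8} and~\ref{thm6}, combining the harmonic reduction of the former with the multiplicative bookkeeping of the latter. First I recall that $f$ being ${\rm{H_tG_h}}$-convex means, by \eqref{eqhHG}, that
\[
f\left(\frac{\alpha\beta}{t\alpha+(1-t)\beta}\right) \le \left[f(\alpha)\right]^{h(1-t)}\left[f(\beta)\right]^{h(t)}
\]
holds for all $\alpha,\beta \in I$ and $t \in [0,1]$. Assuming without loss of generality that $x \le y \le z$ and writing $u = \frac{3xyz}{xy+yz+xz}$ for the harmonic mean of $x,y,z$, the same ordering argument as in Theorem~\ref{thm8} shows that, when $y \le u$, one has $u \le \frac{2xz}{x+z} \le z$ and $u \le \frac{2yz}{y+z} \le z$, so that there exist $s,t \in [0,1]$ with
\[
\frac{2xz}{x+z} = \frac{uz}{su+(1-s)z}, \qquad \frac{2yz}{y+z} = \frac{uz}{tu+(1-t)z}.
\]

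Next I carry out the reduction performed in Theorem~\ref{thm8}: taking reciprocals of these two identities and adding them collapses, after simplification, to the dichotomy that either $x=y=z$ (in which case \eqref{eq4.2} is an equality and there is nothing to prove) or $s+t=\tfrac12$. In the nontrivial case I apply the displayed ${\rm{H_tG_h}}$-inequality to each of the three factors, taking $(\alpha,\beta,t)$ to be $(u,z,s)$, $(u,z,t)$ and $(x,y,\tfrac12)$ respectively, which gives
\[
f\left(\frac{2xz}{x+z}\right)\le [f(u)]^{h(1-s)}[f(z)]^{h(s)}, \qquad f\left(\frac{2yz}{y+z}\right)\le [f(u)]^{h(1-t)}[f(z)]^{h(t)},
\]
together with $f\left(\frac{2xy}{x+y}\right)\le [f(x)f(y)]^{h(1/2)}$.

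Multiplying the three estimates and gathering like bases yields the exponent $h(1-s)+h(1-t)$ on $f(u)$ and $h(s)+h(t)$ on $f(z)$. The final step is to push the superadditivity of $h$ through the exponents: since $s+t=\tfrac12$, superadditivity gives $h(1-s)+h(1-t)\le h(2-s-t)=h(3/2)$ and $h(s)+h(t)\le h(s+t)=h(1/2)$, which, after absorbing $[f(z)]^{h(1/2)}$ into $[f(x)f(y)f(z)]^{h(1/2)}$, produces the claimed bound \eqref{eq4.2}. The concave case follows verbatim with every inequality reversed and subadditivity of $h$ in place of superadditivity.

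I expect the main obstacle to be the monotonicity subtlety hidden in this last step: replacing a base's exponent by a larger one preserves the direction of the inequality only when the base is at least $1$, so the exponent manipulation is transparent exactly when $f \ge 1$ on $I$ (as in the $\cosh$ examples that accompany Theorem~\ref{thm6}); in general one must track the direction factor by factor, and this is the only point requiring care beyond the template of Theorem~\ref{thm8}.
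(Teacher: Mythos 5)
Your proposal follows essentially the same route as the paper's own proof: the identical reduction borrowed from Theorem \ref{thm8} to the dichotomy $x=y=z$ or $s+t=\tfrac{1}{2}$, the same three applications of ${\rm{H_tG_h}}$-convexity with $(\alpha,\beta)$ equal to $(u,z)$, $(u,z)$, $(x,y)$, and the same use of superadditivity to pass from $h(s)+h(t)$ and $h(1-s)+h(1-t)$ to $h(1/2)$ and $h(3/2)$. The caveat you flag at the end is a fair observation --- replacing an exponent by a larger one preserves the inequality only when the base $f(u)$, $f(z)$ is at least $1$ --- but this is precisely the step the paper itself performs without comment, so your argument is neither weaker nor stronger than the published one.
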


\begin{proof}
	$f$ is ${\rm{H_tG_h}}$-convex iff the inequality
	\begin{align*}
	f\left( {\frac{{\alpha \beta }}{{t\alpha  + \left( {1 - t}
				\right)\beta }}} \right) \le \left[ {f\left( \alpha  \right)}
	\right]^{h\left( { 1-t} \right)} \left[ {f\left( \beta  \right)}
	\right]^{h\left( {  t} \right)}, \qquad 0\le t\le 1.
	\end{align*}
	holds for all $\alpha,\beta \in I$. As in the proof of Theorem
	\ref{thm8}, if $x=y=z$, then the inequality holds. If
	$s+t=\frac{1}{2}$ since $f$ is ${\rm{H_tG_h}}$-convex, we have
	\begin{align*}
	f\left( {\frac{{2xz}}{{x + z}}} \right) &= f\left( {\frac{{{\textstyle{{3xyz} \over {xy + yz + xz}}} \cdot z}}{{s{\textstyle{{3xyz} \over {xy + yz + xz}}} + \left( {1 - s} \right)z}}} \right) \le \left[ {f\left( z \right)} \right]^{h\left( s \right)} \left[ {f\left( {\frac{{3xyz}}{{xy + yz + xz}}} \right)} \right]^{h\left( {1 - s} \right)},  \\
	f\left( {\frac{{2yz}}{{y + z}}} \right) &= f\left( {\frac{{{\textstyle{{3xyz} \over {xy + yz + xz}}} \cdot z}}{{t{\textstyle{{3xyz} \over {xy + yz + xz}}} + \left( {1 - t} \right)z}}} \right) \le \left[ {f\left( z \right)} \right]^{h\left( t \right)} \left[ {f\left( {\frac{{3xyz}}{{xy + yz + xz}}} \right)} \right]^{h\left( {1 - t} \right)},  \\
	f\left( {\frac{{2xy}}{{x + y}}} \right) &\le \left[ {f\left( x \right)f\left( y \right)} \right]^{h\left( {1/2}
		\right)}.
	\end{align*}
	Multiplying these inequalities we get
	\begin{align*}
	&f\left( {\frac{{2xz}}{{x + z}}} \right)f\left( {\frac{{2yz}}{{y + z}}} \right)f\left( {\frac{{2xy}}{{x + y}}} \right) \\
	&\le \left[ {f\left( z \right)} \right]^{h\left( s \right)} \left[ {f\left( {\frac{{3xyz}}{{xy + yz + xz}}} \right)} \right]^{h\left( {1 - s} \right)} \left[ {f\left( z \right)} \right]^{h\left( t \right)} \left[ {f\left( {\frac{{3xyz}}{{xy + yz + xz}}} \right)} \right]^{h\left( {1 - t} \right)} \left[ {f\left( x \right)f\left( y \right)} \right]^{h\left( {1/2} \right)}  \\
	&\le \left[ {f\left( z \right)} \right]^{h\left( s \right) + h\left( t \right)} \left[ {f\left( {\frac{{3xyz}}{{xy + yz + xz}}} \right)} \right]^{h\left( {1 - s} \right) + h\left( {1 - t} \right)} \left[ {f\left( x \right)f\left( y \right)} \right]^{h\left( {1/2} \right)}  \\
	&\le \left[ {f\left( z \right)} \right]^{h\left( {s + t} \right)} \left[ {f\left( {\frac{{3xyz}}{{xy + yz + xz}}} \right)} \right]^{h\left( {2 - s - t} \right)} \left[ {f\left( x \right)f\left( y \right)} \right]^{h\left( {1/2} \right)}  \\
	&= \left[ {f\left( z \right)} \right]^{h\left( {1/2} \right)} \left[ {f\left( {\frac{{3xyz}}{{xy + yz + xz}}} \right)} \right]^{h\left( {3/2} \right)} \left[ {f\left( x \right)f\left( y \right)} \right]^{h\left( {1/2} \right)}  \\
	&= \left[ {f\left( {\frac{{3xyz}}{{xy + yz + xz}}} \right)} \right]^{h\left( {3/2} \right)} \left[ {f\left( x \right)f\left( y \right)f\left( z \right)} \right]^{h\left( {1/2}
		\right)},
	\end{align*}
	which proves the inequality in \eqref{eq4.2}.
\end{proof}

\begin{remark}
Setting $z=y$ 	in \eqref{eq4.2}, we get that
	\begin{align*}
	2f\left( {\frac{{2xy}}{{x + y}}} \right)   f\left( {y} \right)\le
	\,(\ge)\,\left[ {f\left( {\frac{{3xy}}{{2x + y}}} \right)}
	\right]^{h\left( {3/2} \right)} \left[ {f\left( x \right)f^2\left(
		y \right)} \right]^{h\left( {1/2} \right)},
	\end{align*}
	for all $x,y\in I$.
\end{remark}

\begin{corollary}
	\label{cor28}If $f : I \to \left(0,\infty\right)$ is
	${\rm{H_tG_t}}$-convex (concave) function, then
	\begin{align*}
	&f\left( {\frac{{2xz}}{{x + z}}} \right)   f\left(
	{\frac{{2yz}}{{y + z}}} \right)   f\left( {\frac{{2xy}}{{x + y}}}
	\right)\nonumber
	\\
	&\le \,(\ge)\,\left[ {f\left( {\frac{{3xyz}}{{xy + yz + xz}}}
		\right)} \right]^{3/2} \left[ {f\left( x \right)f\left( y
		\right)f\left( z \right)} \right]^{1/2},
	\end{align*}
	for all $x,y,z\in I$. The equality holds with
	$f\left(x\right)={\rm{e}}^{\frac{1}{x}}$, $x>0$.
\end{corollary}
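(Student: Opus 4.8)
The plan is to derive Corollary \ref{cor28} as the direct specialization of Theorem \ref{thm9} to the weight $h(t) = t$. The first step is to note that the identity weight is additive, $h(s+t) = s+t = h(s)+h(t)$, and hence is simultaneously superadditive and subadditive; it therefore satisfies the hypothesis of Theorem \ref{thm9} in both the convex and the concave branch. Since an ${\rm{H_tG_t}}$-convex (concave) function is exactly an ${\rm{H_tG_h}}$-convex (concave) function for this choice of $h$, the theorem applies without modification.

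The second step is purely computational: evaluating the weight at the two arguments that appear in \eqref{eq4.2} gives $h(3/2) = 3/2$ and $h(1/2) = 1/2$. Substituting these into the conclusion of Theorem \ref{thm9} replaces the exponents $h(3/2)$ and $h(1/2)$ by $3/2$ and $1/2$, which is exactly the asserted inequality for all $x,y,z \in I$, with the inequality sense dictated by convexity versus concavity.

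For the equality assertion I would verify directly that $f(x) = e^{1/x}$ turns the defining relation \eqref{eqhHG} (with $h(t)=t$) into an identity. Indeed, the left-hand side equals $\exp\!\left( \frac{t\alpha + (1-t)\beta}{\alpha\beta} \right) = \exp\!\left( \frac{1-t}{\alpha} + \frac{t}{\beta} \right)$, while the right-hand side is $[f(\alpha)]^{1-t}[f(\beta)]^{t} = \exp\!\left( \frac{1-t}{\alpha} + \frac{t}{\beta} \right)$, so the two coincide. The only point that needs care is the propagation of this local equality to the global three-point bound: in the proof of Theorem \ref{thm9} the two sources of slack are the pointwise ${\rm{H_tG_h}}$-convexity estimates and the superadditivity step $h(s)+h(t) \ge h(s+t)$. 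For $h(t)=t$ the superadditivity step is an equality, and for $f(x)=e^{1/x}$ the pointwise estimates are equalities by the computation just given; hence every inequality in that argument collapses, and the Popoviciu-type bound holds with equality.
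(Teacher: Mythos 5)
Your proposal is correct and matches the paper's (implicit) argument: the paper states Corollary \ref{cor28} as the immediate specialization of Theorem \ref{thm9} to $h(t)=t$, exactly as you do, with the additivity of $h$ making both branches of the hypothesis available and giving the exponents $h(3/2)=3/2$, $h(1/2)=1/2$. Your verification of the equality case for $f(x)={\rm{e}}^{1/x}$ — showing \eqref{eqhHG} becomes an identity and that the only other slack, the superadditivity step, vanishes for additive $h$ — is a correct and welcome addition, since the paper asserts this equality without proof.
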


\begin{example}
	Let $f\left(x\right)=\exp\left(x\right)$, $x>0$. Then, $f$ is
	${\rm{H_tG_t}}$-convex on $\left(0,\infty\right)$. Applying
	Corollary \ref{cor28} we get
	\begin{align*}
	\frac{{4xz}}{{x + z}}+\frac{{4yz}}{{y + z}}+\frac{{4xy}}{{x +
			y}}\le \frac{{9xyz}}{{xy + yz + xz}}+ xyz,
	\end{align*}
	for all $x,y,z>0$.
\end{example}

\begin{corollary}
	\label{cor29}If $f : I \to \left(0,\infty\right)$ is
	${\rm{H_tG_{1/t}}}$-concave, then
	\begin{align*}
	&f\left( {\frac{{2xz}}{{x + z}}} \right)   f\left(
	{\frac{{2yz}}{{y + z}}} \right)   f\left( {\frac{{2xy}}{{x + y}}}
	\right)\nonumber
	\\
	&\ge\left[ {f\left( {\frac{{3xyz}}{{xy + yz + xz}}} \right)}
	\right]^{2/3} \left[ {f\left( x \right)f\left( y \right)f\left( z
		\right)} \right]^{2},
	\end{align*}
	for all $x,y,z\in I$.
\end{corollary}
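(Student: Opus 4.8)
The plan is to obtain Corollary \ref{cor29} as the specialization of Theorem \ref{thm9} to the weight $h(t)=1/t$. The first step is to identify the additivity class of this weight: $h(t)=t^{-1}$ is the case $k=-1$ of $h(x)=x^{k}$, which by the classification recorded just before Theorem \ref{thm1} is subadditive. Hence the assumption that $f$ is $H_tG_{1/t}$-concave places us precisely in the \emph{concave} branch of Theorem \ref{thm9}, run with a non-negative subadditive $h$, so that the conclusion \eqref{eq4.2} is available in its reversed, $(\ge)$, form.

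The second step is a one-line substitution. Since $h(t)=1/t$ we have $h(3/2)=2/3$ and $h(1/2)=2$, so the $(\ge)$ form of \eqref{eq4.2} reads
\[
f\!\left(\tfrac{2xz}{x+z}\right)f\!\left(\tfrac{2yz}{y+z}\right)f\!\left(\tfrac{2xy}{x+y}\right)\;\ge\;\Big[f\!\left(\tfrac{3xyz}{xy+yz+xz}\right)\Big]^{2/3}\big[f(x)f(y)f(z)\big]^{2},
\]
which is verbatim the assertion. Granting Theorem \ref{thm9}, nothing further is required.

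The step I expect to be the genuine obstacle lives inside the appeal to Theorem \ref{thm9}, not in the substitution. As noted in the discussion preceding the additivity definitions, a non-negative $H_tG_{1/t}$-concave function is forced to satisfy $0\le f\le 1$. The engine of the proof of Theorem \ref{thm9} is the collapse $[f(z)]^{h(s)+h(t)}\to[f(z)]^{h(s+t)}$ (together with the companion collapse on the central factor), and the direction of this monotonicity step depends on whether the base lies above or below $1$, with $s+t$ pinned to the value $\tfrac12$ inherited from Theorem \ref{thm8}. I would therefore verify explicitly that, under the two constraints active here, namely subadditivity of $h$ together with $0\le f\le 1$, each such step still points in the direction demanded by the $(\ge)$ conclusion. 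A convenient device for this sign bookkeeping is to set $g=-\log f\ge 0$: applying $-\log$ converts the claim into the linear Popoviciu inequality $g(\tfrac{2xz}{x+z})+g(\tfrac{2yz}{y+z})+g(\tfrac{2xy}{x+y})\le \tfrac23\,g(\tfrac{3xyz}{xy+yz+xz})+2\big[g(x)+g(y)+g(z)\big]$ for the $H_tA_{1/t}$-convex function $g$, where the comparison is now between fixed coefficients and non-negative values of $g$ and is correspondingly easier to audit; once this compatibility is confirmed, the substitution of the second paragraph closes the argument.
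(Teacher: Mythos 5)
Your first two paragraphs reproduce exactly the derivation the paper intends: Corollary \ref{cor29} is stated without separate proof, as the specialization $h(t)=1/t$ of Theorem \ref{thm9}, and your bookkeeping (subadditivity of $t^{-1}$, $h(3/2)=2/3$, $h(1/2)=2$, concave branch) is the same as the paper's. The genuine gap is the step you flag in your last paragraph and then defer: the ``compatibility'' you propose to confirm is in fact false, so the argument cannot be closed. In the concave branch of Theorem \ref{thm9} one multiplies the three estimates and must pass from $\left[f(u)\right]^{h(1-s)+h(1-t)}\left[f(z)\right]^{h(s)+h(t)}$ down to $\left[f(u)\right]^{h(3/2)}\left[f(z)\right]^{h(1/2)}$, where $u=\frac{3xyz}{xy+yz+xz}$ and $s+t=\frac12$. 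Subadditivity gives $h(1-s)+h(1-t)\ge h(3/2)$ and $h(s)+h(t)\ge h(1/2)$, so this collapse points the required way only if $f(u),f(z)\ge 1$; but, as the paper itself proves in Section 2, every ${\rm H_tG_{1/t}}$-concave function satisfies $0<f\le 1$, and for a base $\le 1$ a larger exponent produces a \emph{smaller} value. Your own $g=-\log f$ reformulation exposes the same failure: applying $H_tA_{1/t}$-convexity of $g$ to the three decompositions yields
\[
g\!\left(\tfrac{2xz}{x+z}\right)+g\!\left(\tfrac{2yz}{y+z}\right)+g\!\left(\tfrac{2xy}{x+y}\right)\le\left[\tfrac{1}{1-s}+\tfrac{1}{1-t}\right]g(u)+\left[\tfrac{1}{s}+\tfrac{1}{t}\right]g(z)+2\bigl[g(x)+g(y)\bigr],
\]
and with $s+t=\frac12$ one has $\frac{1}{1-s}+\frac{1}{1-t}\ge\frac83>\frac23$ and $\frac1s+\frac1t\ge 8>2$, so for non-negative $g$ these coefficients can never be lowered to the $\frac23$ and $2$ demanded by the target. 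The audit you postpone therefore fails; the defect is inherited from the paper, whose concave/subadditive branch of Theorem \ref{thm9} is incompatible with the bound $f\le1$ forced by the very hypothesis of Corollary \ref{cor29}.

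Moreover, no alternative argument can fill the gap, because the statement itself is false. Take $I=\left[\frac1{10},1\right]$, fix $\epsilon>0$, and let $f=e^{-4\epsilon}$ on $\left[\frac17,\frac15\right]$ and $f=e^{-\epsilon}$ elsewhere on $I$. Then $g=-\log f$ takes only the values $\epsilon$ and $4\epsilon$, and since $\frac1t+\frac1{1-t}\ge4$ for $t\in(0,1)$ we get $g\!\left(\frac{\alpha\beta}{t\alpha+(1-t)\beta}\right)\le 4\epsilon\le\frac{g(\alpha)}{1-t}+\frac{g(\beta)}{t}$ for all $\alpha,\beta\in I$, i.e.\ $f$ is ${\rm H_tG_{1/t}}$-concave. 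Now take $x=1$, $y=\frac12$, $z=\frac1{10}$: then $\frac{2xz}{x+z}=\frac2{11}$ and $\frac{2yz}{y+z}=\frac16$ lie in $\left[\frac17,\frac15\right]$, while $\frac{2xy}{x+y}=\frac23$, $u=\frac3{13}$, and $x,y,z$ do not. The left-hand side of the corollary equals $e^{-9\epsilon}$, the right-hand side equals $e^{-2\epsilon/3}\cdot e^{-6\epsilon}=e^{-20\epsilon/3}$, and $e^{-9\epsilon}<e^{-20\epsilon/3}$ because $9>\frac{20}{3}$. So the claimed $\ge$ fails; the corollary (and with it the concave branch of Theorem \ref{thm9} for subadditive $h$) is not merely unproved but untrue, and your proposal cannot be repaired.
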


\begin{example}
	Let $f\left(x\right)=\exp\left(-x\right)$, $x>0$. Then, $f$ is
	${\rm{H_tG_{1/t}}}$-concave on $\left(0,\infty\right)$.
	Applying Corollary \ref{cor29} we get
	\begin{align*}
	\frac{{ xz}}{{x + z}}+\frac{{ yz}}{{y + z}}+
	\frac{{ xy}}{{x + y}}
	\le \frac{{ xyz}}{{xy + yz + xz}}+  xyz,
	\end{align*}
	for all $x,y,z>0$.
\end{example}
\begin{corollary}
	\label{cor30} If $f : I \to \left(0,\infty\right)$ is
	${\rm{H_tG_1}}$-concave function, then
	\begin{align*}
	&f\left( {\frac{{2xz}}{{x + z}}} \right)   f\left(
	{\frac{{2yz}}{{y + z}}} \right)   f\left( {\frac{{2xy}}{{x + y}}}
	\right)\nonumber
	\\
	&\ge f\left( {\frac{{3xyz}}{{xy + yz + xz}}} \right) f\left( x
	\right)f\left( y \right)f\left( z \right) ,
	\end{align*}
	for all $x,y,z\in I$.
\end{corollary}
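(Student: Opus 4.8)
The plan is to read off Corollary~\ref{cor30} as the specialization $h\equiv 1$ of Theorem~\ref{thm9}, in exactly the manner that Corollaries~\ref{cor19} and~\ref{cor23} arise from their parent theorems. Since Theorem~\ref{thm9} already delivers the exponent pattern $h(3/2)$ and $h(1/2)$ on the right-hand side of \eqref{eq4.2}, all that remains is to choose $h$ so that both exponents become $1$, and the constant function does exactly that.

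First I would check that $h(t)\equiv 1$ is admissible for the concave branch of Theorem~\ref{thm9}. It is non-negative, and from $h(s+t)=1\le 2=h(s)+h(t)$ it is subadditive; subadditivity is the hypothesis Theorem~\ref{thm9} pairs with the reversed (concave) inequality, so the relevant case applies. I would then note that $H_tG_1$-concavity is nothing but $H_tG_h$-concavity for this particular $h$: the defining inequality
\begin{align*}
f\!\left( {\frac{{\alpha \beta }}{{t\alpha  + \left( {1 - t} \right)\beta }}} \right) \ge \left[ {f\!\left( \alpha  \right)} \right]^{h\left( { 1-t} \right)} \left[ {f\!\left( \beta  \right)} \right]^{h\left( {  t} \right)}
\end{align*}
collapses to $f\!\left(\tfrac{\alpha\beta}{t\alpha+(1-t)\beta}\right)\ge f(\alpha)f(\beta)$, which is precisely the assumption carried by the corollary.

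With these identifications the conclusion is immediate. Writing $u=\tfrac{3xyz}{xy+yz+xz}$, the right-hand side of the reversed inequality \eqref{eq4.2} is
\begin{align*}
\left[ {f\left( u \right)} \right]^{h\left( {3/2} \right)} \left[ {f\left( x \right)f\left( y \right)f\left( z \right)} \right]^{h\left( {1/2} \right)} = f\left( u \right)\, f\left( x \right)f\left( y \right)f\left( z \right),
\end{align*}
since $h(3/2)=h(1/2)=1$, and this is exactly the bound asserted in Corollary~\ref{cor30}.

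The only point I would double-check — and the sole place a slip could occur — is the matching of the parenthetical hypotheses: one must be sure that the constant $h\equiv 1$ falls into the subadditive class that Theorem~\ref{thm9} attaches to the concave case, and not into the superadditive class reserved for the convex case. Once that pairing is confirmed the argument is a bare substitution, so I anticipate no real obstacle. Should a self-contained proof be preferred, one simply repeats the proof of Theorem~\ref{thm9} with $h\equiv 1$: the case analysis again forces either $x=y=z$ (trivial) or $s+t=\tfrac12$, the three pointwise estimates read $f(\tfrac{2xz}{x+z})\ge f(u)f(z)$, $f(\tfrac{2yz}{y+z})\ge f(u)f(z)$ and $f(\tfrac{2xy}{x+y})\ge f(x)f(y)$, and it is the subadditivity step of that proof that reconciles the resulting exponents with the single powers appearing on the right.
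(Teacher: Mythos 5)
Your proposal is correct and is precisely the paper's (implicit) route: Corollary \ref{cor30} is the specialization of Theorem \ref{thm9} to $h\equiv 1$, which is non-negative and subadditive (since $h(s+t)=1\le 2=h(s)+h(t)$), so the reversed (concave) branch of \eqref{eq4.2} applies, and the exponents $h(3/2)=h(1/2)=1$ collapse its right-hand side to $f\left(\tfrac{3xyz}{xy+yz+xz}\right)f(x)f(y)f(z)$. The paper states this corollary without any separate argument, so your black-box application of the theorem---with the pairing of subadditivity to the concave case checked, and ${\rm{H_tG_1}}$-concavity identified as ${\rm{H_tG_h}}$-concavity for constant $h$---is exactly what is intended.
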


\begin{example}
	Let $f\left(x\right)=\exp\left(-x\right)$, $x>0$. Then, $f$ is
	${\rm{H_tG_1}}$-concave on $\left(0,\infty\right)$. Applying
	Corollary \ref{cor30} we get
	\begin{align*}
	\frac{{2xz}}{{x + z}}+\frac{{2yz}}{{y + z}}+\frac{{2xy}}{{x + y}}
	\le  \frac{{3xyz}}{{xy + yz + xz}} + x+y +z
	\end{align*}
	for all $x,y,z >0$.
\end{example}

\begin{corollary}
	\label{cor13}In Theorem \ref{thm9}.
	\begin{enumerate}
		\item If $f : I \to \left(0,\infty\right)$ is an
		${\rm{H_tG_h}}$-convex and superadditive, then
		\begin{align*}
		&2 \left[{f\left( {\frac{{xz}}{{x + z}} } \right)+f\left( {
				\frac{{yz}}{{y + z}} } \right)+f\left( { \frac{{xy}}{{x + y}}}
			\right)}\right]
		\\
		&\le f\left( {\frac{{2xz}}{{x + z}}} \right) + f\left(
		{\frac{{2yz}}{{y + z}}} \right) + f\left( {\frac{{2xy}}{{x + y}}}
		\right)
		\\
		&\le  \left[ {f\left( {\frac{{3xyz}}{{xy + yz + xz}}} \right)}
		\right]^{h\left( {3/2} \right)} \left[ {f\left( x \right)f\left( y
			\right)f\left( z \right)} \right]^{h\left( {1/2} \right)},
		\end{align*}
		for all $x,y,z\in I$. If $f$ is an $h$-${\rm{H_tG_t}}$-concave and
		subadditive, then the inequality is reversed.

		\item If $f : I \to \left(0,\infty\right)$ is an
		${\rm{H_tG_h}}$-convex  and subadditive, then
		\begin{align*}
		&f\left( {\frac{{2xz}}{{x + z}}+\frac{{2yz}}{{y +
					z}}+\frac{{2xy}}{{x + y}}} \right)
		\\
		&\le f\left( {\frac{{2xz}}{{x + z}}} \right) + f\left(
		{\frac{{2yz}}{{y + z}}} \right) + f\left( {\frac{{2xy}}{{x + y}}}
		\right)
		\\
		&\le  \left[ {f\left( {\frac{{3xyz}}{{xy + yz + xz}}} \right)}
		\right]^{h\left( {3/2} \right)} \left[ {f\left( x \right)f\left( y
			\right)f\left( z \right)} \right]^{h\left( {1/2} \right)}
		\\
		&\le  \left[ {3f\left( {\frac{{xyz}}{{xy + yz + xz}}} \right)}
		\right]^{h\left( {3/2} \right)} \left[ {f\left( x \right)f\left( y
			\right)f\left( z \right)} \right]^{h\left( {1/2} \right)},
		\end{align*}
		for all $x,y,z\in I$. If $f$ is an  ${\rm{H_tG_h}}$-concave and
		superadditive, then the inequality is reversed.
	\end{enumerate}
\end{corollary}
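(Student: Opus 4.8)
The plan is to notice that in each of the two items the central (middle-to-right) inequality is nothing but the conclusion of Theorem~\ref{thm9} applied to the same $x,y,z$; consequently the additivity hypotheses on $f$ are needed only to append the extra outer estimates. To fix notation I would set $p=\frac{xz}{x+z}$, $q=\frac{yz}{y+z}$, $r=\frac{xy}{x+y}$, so that the three arguments occurring in Theorem~\ref{thm9} are $2p,2q,2r$, and I would write $w=\frac{xyz}{xy+yz+xz}$, so that the barycentric argument equals $3w$.

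For item (1) the only new content is the left inequality, which I would obtain from superadditivity applied separately at each point: since $f$ is superadditive, $f(2p)=f(p+p)\ge f(p)+f(p)=2f(p)$, and likewise $f(2q)\ge 2f(q)$ and $f(2r)\ge 2f(r)$. Adding these three gives $f(2p)+f(2q)+f(2r)\ge 2\left[f(p)+f(q)+f(r)\right]$, i.e.\ exactly the claimed lower bound; the upper bound is Theorem~\ref{thm9}. The reversed statement (for ${\rm{H_tG_h}}$-concave and subadditive $f$) is immediate because both elementary inequalities reverse under those hypotheses.

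For item (2) the left inequality comes from subadditivity applied to a three-term sum: writing $f(2p+2q+2r)=f\bigl((2p+2q)+2r\bigr)$ and using the two-variable subadditive inequality twice yields $f(2p+2q+2r)\le f(2p)+f(2q)+f(2r)$. The middle inequality is again Theorem~\ref{thm9}. For the rightmost inequality I would apply subadditivity once more, this time to the single argument $3w$: $f(3w)=f(w+w+w)\le 3f(w)$; then, since $h(3/2)>0$ and $s\mapsto s^{h(3/2)}$ is increasing on $(0,\infty)$, raising to that power gives $\left[f(3w)\right]^{h(3/2)}\le\left[3f(w)\right]^{h(3/2)}$. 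Multiplying through by the common factor $\left[f(x)f(y)f(z)\right]^{h(1/2)}$ turns this into the passage from the Theorem~\ref{thm9} bound to the final bound, completing the chain.

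I do not expect any genuinely difficult step; the work is purely organizational. The only points I would take care over are that every quantity fed into the additivity inequalities---namely $p,q,r,2p,2q,2r$ and $w,3w$---lies in $I$, so that the defining inequalities are legitimately available, and that positivity of $f$ together with $h(3/2)>0$ (guaranteed since $h:I\to(0,\infty)$) is what licenses raising to the power $h(3/2)$ monotonically in item (2). The two reversed assertions then follow verbatim by exchanging ``superadditive'' and ``subadditive'' and ``convex'' and ``concave'', since each elementary inequality invoked flips under exactly those exchanges.
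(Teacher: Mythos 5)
Your outer steps (the superadditivity bound in item (1), the subadditivity bounds $f(2p+2q+2r)\le f(2p)+f(2q)+f(2r)$ and $f(3w)\le 3f(w)$ in item (2), in your notation $p=\frac{xz}{x+z}$, $q=\frac{yz}{y+z}$, $r=\frac{xy}{x+y}$, $w=\frac{xyz}{xy+yz+xz}$) are fine, but the load-bearing claim --- that the middle-to-right inequality in each item ``is Theorem \ref{thm9}'' --- is a genuine gap. Theorem \ref{thm9} bounds the \emph{product} $f\left(\frac{2xz}{x+z}\right)f\left(\frac{2yz}{y+z}\right)f\left(\frac{2xy}{x+y}\right)$ by $\left[f\left(\frac{3xyz}{xy+yz+xz}\right)\right]^{h(3/2)}\left[f(x)f(y)f(z)\right]^{h(1/2)}$, whereas the middle line of the corollary as printed is the \emph{sum} of those three values. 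A sum is not comparable to a product bound, so Theorem \ref{thm9} simply does not apply; and no repair is possible, because the printed inequality is false. Take $h(t)=t$ and $f(x)=x$ (which is ${\rm{H_tG_h}}$-convex by weighted HM--GM and is superadditive) at $x=y=z=1$: item (1) would read $3\le 3\le 1$. Take $h(t)=t$ and $f\equiv 1$ (which is ${\rm{H_tG_h}}$-convex and subadditive): item (2) would read $1\le 3\le 1$. In both cases the link you attributed to Theorem \ref{thm9} fails.

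What has happened is that the corollary's statement carries over the additive format of Corollary \ref{cor27} (the ${\rm{H_tA_h}}$ case, where Theorem \ref{thm8} really is a sum inequality) without converting sums to products; the statement consistent with Theorem \ref{thm9}, and with the analogous Corollaries \ref{cor9} and \ref{cor20}, must have products in the middle. With that correction your strategy does go through: in item (1), superadditivity gives $f\left(\frac{2xz}{x+z}\right)\ge 2f\left(\frac{xz}{x+z}\right)$ factorwise, hence $8f(p)f(q)f(r)\le f(2p)f(2q)f(2r)$, and Theorem \ref{thm9} supplies the upper bound; in item (2), your final step ($f(3w)\le 3f(w)$, then raising to the positive power $h(3/2)$ and multiplying by $\left[f(x)f(y)f(z)\right]^{h(1/2)}>0$) is correct as written, but the first link must also be multiplicative (e.g.\ via submultiplicativity, as in Corollary \ref{cor20}(2)), since a subadditivity bound on $f(2p+2q+2r)$ again cannot be chained to a product. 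A blind proof should have detected this sum/product mismatch and flagged or corrected the statement, rather than asserting that Theorem \ref{thm9} covers it.
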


\subsection{The case when $f$ is  ${\rm{H_tH_h}}$-convex}

\begin{theorem}
	Let $h: I\to \left(0,\infty\right)$ be a non-negative
	super(sub)additive.
	If $f : I \to \left(0,\infty\right)$ is
	${\rm{H_tH_h}}$-concave (convex) function, then
	\begin{align}
	&f\left( {\frac{{2xz}}{{x + z}}} \right) + f\left(
	{\frac{{2yz}}{{y + z}}} \right) + f\left( {\frac{{2xy}}{{x + y}}}
	\right)\nonumber
	\\
	&\le \,(\ge)\,h\left( {\frac{1}{2}} \right)\left[
	{\frac{1}{{f\left( x \right)}} + \frac{1}{{f\left( y \right)}} +
		\frac{1}{{f\left( z \right)}}} \right] + \frac{{h\left( {3/2}
			\right)}}{{f\left( {{\textstyle{{3xyz} \over {xy + yz + xz}}}}
			\right)}},\label{eq4.3}
	\end{align}
	for all $x,y,z\in I$.
\end{theorem}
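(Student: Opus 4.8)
The plan is to follow the template of Theorems \ref{thm8} and \ref{thm7} almost verbatim, because the domain mean here is the harmonic mean $\mathrm{H_t}$ (exactly as in Theorem \ref{thm8}) while the codomain mean is the generalized harmonic mean $\mathrm{H_h}$ (exactly as in Theorem \ref{thm7}). First I would record the defining relation: $f$ is $\mathrm{H_tH_h}$-convex iff \eqref{eqhHH} holds for all $\alpha,\beta\in I$, and $\mathrm{H_tH_h}$-concave iff the reverse of \eqref{eqhHH} holds. The key observation is that, since $f>0$, each such inequality can be inverted into an \emph{additive} inequality for the reciprocals $1/f$, and it is this additive form that sums cleanly into the desired bound (so the left-hand quantities to be controlled are the reciprocals $1/f(\cdot)$, matching the $\mathrm{H_h}$-codomain form of Theorem \ref{thm7}).

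Next I would inherit the geometric reduction from the proof of Theorem \ref{thm8} without change. Assuming $x\le y\le z$ and writing $u=\frac{3xyz}{xy+yz+xz}$, that argument produces $s,t\in[0,1]$ with $\frac{2xz}{x+z}=\frac{uz}{su+(1-s)z}$ and $\frac{2yz}{y+z}=\frac{uz}{tu+(1-t)z}$, and shows that either $x=y=z$ (in which case the inequality is trivial) or $s+t=\tfrac12$. In the latter case I apply the reverse of \eqref{eqhHH} with $(\alpha,\beta)=(u,z)$ at weights $s$ and $t$, and with $(\alpha,\beta)=(x,y)$ at weight $1/2$; inverting the three resulting estimates gives
\[
\frac{1}{f\!\left(\frac{2xz}{x+z}\right)}\le h(s)\frac{1}{f(z)}+h(1-s)\frac{1}{f(u)},\qquad \frac{1}{f\!\left(\frac{2yz}{y+z}\right)}\le h(t)\frac{1}{f(z)}+h(1-t)\frac{1}{f(u)},
\]
together with $\frac{1}{f(2xy/(x+y))}\le h(1/2)\bigl[\frac{1}{f(x)}+\frac{1}{f(y)}\bigr]$.

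Summing these three reciprocal bounds, the coefficient of $1/f(z)$ is $h(s)+h(t)$ and that of $1/f(u)$ is $h(1-s)+h(1-t)$. Since $s+t=\tfrac12$, superadditivity of $h$ yields $h(s)+h(t)\le h(s+t)=h(1/2)$ and $h(1-s)+h(1-t)\le h\bigl((1-s)+(1-t)\bigr)=h(3/2)$; because every $1/f$-value is positive, these upgrade the sum to $h(1/2)\bigl[\tfrac{1}{f(x)}+\tfrac{1}{f(y)}+\tfrac{1}{f(z)}\bigr]+\frac{h(3/2)}{f(u)}$, which is the claimed right-hand side. The convex case is identical, using the forward form of \eqref{eqhHH}, subadditivity of $h$, and all inequalities reversed. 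The only genuine obstacle is bookkeeping: one must keep the directions aligned when passing to reciprocals, so that concavity of $f$, positivity of $f$, and superadditivity of $h$ all conspire to give $\le$ (and dually for the convex/subadditive case); the geometric part requires no new work since it is literally that of Theorem \ref{thm8}.
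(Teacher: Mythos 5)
Your proposal is correct and takes essentially the same route as the paper's own proof: it inherits the geometric reduction of Theorem \ref{thm8} (yielding $x=y=z$ or $s+t=\tfrac12$), inverts the $\mathrm{H_h}$-codomain inequality into additive bounds on the reciprocals using $f>0$, and then applies super(sub)additivity of $h$ to obtain the coefficients $h(1/2)$ and $h(3/2)$ exactly as the paper does. Like the paper's proof, what you actually establish is the bound on the sum of reciprocals $\frac{1}{f(\cdot)}$, which is the correct reading of \eqref{eq4.3}: the printed left-hand side without reciprocals is a typo, as comparison with Corollaries \ref{cor31}--\ref{cor33} confirms.
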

\begin{proof}
	$f$ is ${\rm{H_tH_h}}$-convex iff the inequality
	\begin{align*}
	f\left( {\frac{{\alpha \beta }}{{t\alpha  + \left( {1 - t}
				\right)\beta }}} \right) \le \frac{{f\left( \alpha  \right)f\left(
			\beta  \right)}}{{h\left( {t} \right)f\left( \alpha  \right) +
			h\left( {1 - t} \right)f\left( \beta  \right)}}, \qquad  0\le t\le
	1
	\end{align*}
	holds for all $\alpha, \beta \in I$. As in the proof of Theorem
	\ref{thm8}, if $x=y=z$, then the inequality holds. If
	$s+t=\frac{1}{2}$ since $f$ is ${\rm{H_tH_h}}$-convex, we have
	
	\begin{align*}
	f\left( {\frac{{2xz}}{{x + z}}} \right) &= f\left( {\frac{{{\textstyle{{3xyz} \over {xy + yz + xz}}} \cdot z}}{{s{\textstyle{{3xyz} \over {xy + yz + xz}}} + \left( {1 - s} \right)z}}} \right) \ge \frac{{f\left( {{\textstyle{{3xyz} \over {xy + yz + xz}}}} \right) \cdot f\left( z \right)}}{{h\left( s \right)f\left( {{\textstyle{{3xyz} \over {xy + yz + xz}}}} \right) + h\left( {1 - s} \right)f\left( z \right)}}, \\
	f\left( {\frac{{2yz}}{{y + z}}} \right) &= f\left( {\frac{{{\textstyle{{3xyz} \over {xy + yz + xz}}} \cdot z}}{{t{\textstyle{{3xyz} \over {xy + yz + xz}}} + \left( {1 - t} \right)z}}} \right) \ge \frac{{f\left( {{\textstyle{{3xyz} \over {xy + yz + xz}}}} \right) \cdot f\left( z \right)}}{{h\left( t \right)f\left( {{\textstyle{{3xyz} \over {xy + yz + xz}}}} \right) + h\left( {1 - t} \right)f\left( z \right)}}, \\
	f\left( {\frac{{2xy}}{{x + y}}} \right) &\ge \frac{{f\left( x \right)f\left( y \right)}}{{h\left( {1/2} \right)\left[ {f\left( x \right) + f\left( y \right)}
			\right]}},
	\end{align*}
	Therefore, by summing the reciprocal of the above inequalities
	we get
	\begin{align*}
	&\frac{1}{{f\left( {{\textstyle{{2xz} \over {x + z}}}} \right)}} + \frac{1}{{f\left( {{\textstyle{{2yz} \over {y + z}}}} \right)}} + \frac{1}{{f\left( {{\textstyle{{2xy} \over {x + y}}}} \right)}} \\
	&\le \frac{{h\left( s \right)f\left( {{\textstyle{{3xyz} \over {xy + yz + xz}}}} \right) + h\left( {1 - s} \right)f\left( z \right) + h\left( t \right)f\left( {{\textstyle{{3xyz} \over {xy + yz + xz}}}} \right) + h\left( {1 - t} \right)f\left( z \right)}}{{f\left( {{\textstyle{{3xyz} \over {xy + yz + xz}}}} \right) \cdot f\left( z \right)}} \\
	&\qquad+ \frac{{h\left( {1/2} \right)\left[ {f\left( x \right) + f\left( y \right)} \right]}}{{f\left( x \right)f\left( y \right)}} \\
	&\le \frac{{\left[ {h\left( s \right) + h\left( s \right)} \right]f\left( {{\textstyle{{3xyz} \over {xy + yz + xz}}}} \right) + \left[ {h\left( {1 - s} \right) + h\left( {1 - t} \right)} \right]f\left( z \right)}}{{f\left( {{\textstyle{{3xyz} \over {xy + yz + xz}}}} \right) \cdot f\left( z \right)}} + \frac{{h\left( {1/2} \right)\left[ {f\left( x \right) + f\left( y \right)} \right]}}{{f\left( x \right)f\left( y \right)}} \\
	&\le \frac{{h\left( {s + t} \right)f\left( {{\textstyle{{3xyz} \over {xy + yz + xz}}}} \right) + h\left( {2 - s - t} \right)f\left( z \right)}}{{f\left( {{\textstyle{{3xyz} \over {xy + yz + xz}}}} \right) \cdot f\left( z \right)}} + \frac{{h\left( {1/2} \right)\left[ {f\left( x \right) + f\left( y \right)} \right]}}{{f\left( x \right)f\left( y \right)}} \\
	&= \frac{{h\left( {1/2} \right)f\left( {{\textstyle{{3xyz} \over {xy + yz + xz}}}} \right) + h\left( {3/2} \right)f\left( z \right)}}{{f\left( {{\textstyle{{3xyz} \over {xy + yz + xz}}}} \right) \cdot f\left( z \right)}} + \frac{{h\left( {1/2} \right)\left[ {f\left( x \right) + f\left( y \right)} \right]}}{{f\left( x \right)f\left( y \right)}} \\
	&= \frac{{h\left( {1/2} \right)f\left( {{\textstyle{{3xyz} \over {xy + yz + xz}}}} \right) + h\left( {3/2} \right)f\left( z \right)}}{{f\left( {{\textstyle{{3xyz} \over {xy + yz + xz}}}} \right) \cdot f\left( z \right)}} + \frac{{h\left( {1/2} \right)\left[ {f\left( x \right) + f\left( y \right)} \right]}}{{f\left( x \right)f\left( y \right)}} \\
	&= h\left( {\frac{1}{2}} \right)\left[ {\frac{1}{{f\left( x \right)}} + \frac{1}{{f\left( y \right)}} + \frac{1}{{f\left( z \right)}}} \right] + \frac{{h\left( {3/2} \right)}}{{f\left( {{\textstyle{{3xyz} \over {xy + yz + xz}}}}
			\right)}},
	\end{align*}
	which proves the inequality in \eqref{eq4.3}.
\end{proof}

\begin{remark}
Setting $z=y$ 	in \eqref{eq4.3}, then we get
	\begin{align*}
	2f\left( {\frac{{2xy}}{{x + y}}} \right) + f\left( {y} \right) \le
	\,(\ge)\,h\left( {\frac{1}{2}} \right)\left[ {\frac{1}{{f\left( x
				\right)}} + \frac{2}{{f\left( y \right)}} } \right] +
	\frac{{h\left( {3/2} \right)}}{{f\left( {{\textstyle{{3xy} \over
						{2x + y}}}} \right)}},
	\end{align*}
	for all $x,y,z\in I$.
\end{remark}

\begin{corollary}
	\label{cor31}If $f : I \to \left(0,\infty\right)$ is
	${\rm{H_tH_t}}$-concave (convex) function, then
	\begin{align*}
	&\frac{2}{3}\left[ {\frac{1}{f\left( {\frac{{2xz}}{{x + z}}}
			\right)} + \frac{1}{f\left( {\frac{{2yz}}{{y + z}}} \right)} +
		\frac{1}{f\left( {\frac{{2xy}}{{x + y}}} \right)}}\right]\nonumber
	\\
	&\le \,(\ge)\, \frac{1}{3}\left[ {\frac{1}{{f\left( x \right)}} +
		\frac{1}{{f\left( y \right)}} + \frac{1}{{f\left( z \right)}}}
	\right] + \frac{{1}}{{f\left( {{\textstyle{{3xyz} \over {xy + yz +
							xz}}}} \right)}},
	\end{align*}
	for all $x,y,z\in I$. The equality holds with $f\left(x\right)=x$,
	$x>1$.
\end{corollary}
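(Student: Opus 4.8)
The plan is to follow the template of Theorems \ref{thm4} and \ref{thm7}, reusing verbatim the harmonic-mean decomposition established in the proof of Theorem \ref{thm8}. First I would record that $f$ being ${\rm{H_tH_h}}$-concave means
\begin{align*}
f\left( {\frac{{\alpha \beta }}{{t\alpha  + \left( {1 - t} \right)\beta }}} \right) \ge \frac{{f\left( \alpha  \right)f\left( \beta  \right)}}{{h\left( t \right)f\left( \alpha  \right) + h\left( {1 - t} \right)f\left( \beta  \right)}}
\end{align*}
for all $\alpha,\beta\in I$ and $t\in[0,1]$. Assuming $x\le y\le z$ (legitimate by symmetry) and writing $u=\frac{3xyz}{xy+yz+xz}$, the algebra in the proof of Theorem \ref{thm8} produces parameters $s,t\in[0,1]$ with $\frac{2xz}{x+z}=\frac{uz}{su+(1-s)z}$ and $\frac{2yz}{y+z}=\frac{uz}{tu+(1-t)z}$, and forces either $x=y=z$ (when the inequality is immediate) or $s+t=\frac12$.

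In the nondegenerate case I would apply the concavity bound to the first two factors with $(\alpha,\beta)=(u,z)$ and parameter $s$, respectively $t$, and to the third with $(\alpha,\beta)=(x,y)$ and $t=\frac12$. The decisive step is to \emph{invert} each estimate: taking reciprocals converts the concavity inequalities (which point the ``wrong'' way) into the additive bounds
\begin{align*}
\frac{1}{f\left( \frac{2xz}{x+z} \right)} \le \frac{h(s)}{f(z)}+\frac{h(1-s)}{f(u)}, \qquad \frac{1}{f\left( \frac{2xy}{x+y} \right)} \le h(1/2)\left[ \frac{1}{f(x)}+\frac{1}{f(y)} \right],
\end{align*}
together with the analogue of the first bound for $\frac{2yz}{y+z}$ carrying the parameter $t$. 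Note that the quantity actually controlled on the left is the sum of reciprocals $\sum 1/f(\cdot)$, matching the reciprocal right-hand side exactly as in Theorems \ref{thm4} and \ref{thm7}. Summing the three estimates gathers the coefficient $\frac{h(s)+h(t)}{f(z)}+\frac{h(1-s)+h(1-t)}{f(u)}$ alongside the $x,y$ contribution.

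The closing step invokes superadditivity of $h$: since $s+t=\frac12$ one has $h(s)+h(t)\le h(1/2)$ and $h(1-s)+h(1-t)\le h(2-s-t)=h(3/2)$, which collapses the bound to $h(1/2)\bigl[\frac{1}{f(x)}+\frac{1}{f(y)}+\frac{1}{f(z)}\bigr]+\frac{h(3/2)}{f(u)}$, the asserted inequality; the subadditive/convex case reverses each step. I expect the main obstacle to be purely organizational: one must check that reciprocation attaches $h(s)$ and $h(t)$ to the $\frac{1}{f(z)}$ terms while $h(1-s)$ and $h(1-t)$ attach to $\frac{1}{f(u)}$, so that after summation the $s+t=\frac12$ piece lands on $f(z)$ (and then merges with the third term into the common factor $h(1/2)$ on all of $f(x),f(y),f(z)$) while the $(1-s)+(1-t)=\frac32$ piece lands on $f(u)$; all the genuine geometric difficulty is already discharged by Theorem \ref{thm8}.
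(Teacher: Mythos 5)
Your proposal is correct and takes essentially the same route as the paper: the paper's own argument for this statement is precisely the proof of its ${\rm{H_tH_h}}$ theorem \eqref{eq4.3} — the harmonic decomposition from Theorem \ref{thm8} with $s+t=\tfrac12$, reciprocation so that $h(s),h(t)$ attach to $1/f(z)$ and $h(1-s),h(1-t)$ to $1/f(u)$, then superadditivity — followed by the specialization $h(t)=t$ (additive, so both the concave and convex directions apply) and multiplication by $\tfrac23$ to reach the stated constants. The only omissions are cosmetic: you leave that final specialization/rescaling implicit, and you do not check the equality claim for $f(x)=x$, both of which are immediate.
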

\begin{example}
	Let $f\left(x\right)=\arctan\left(x\right)$, $x>0$. Then $f$ is
	${\rm{H_tH_t}}$-concave  on $\left(0,\infty\right)$. Applying
	Corollary  \ref{cor31}, then we get
	\begin{align*}
	&\frac{2}{3}\left[ {\frac{1}{\arctan\left( {\frac{{2xz}}{{x + z}}}
			\right)} + \frac{1}{\arctan\left( {\frac{{2yz}}{{y + z}}} \right)}
		+ \frac{1}{\arctan\left( {\frac{{2xy}}{{x + y}}} \right)}}\right]
	\\
	&\le  \frac{1}{3}\left[ {\frac{1}{{\arctan\left( x \right)}} +
		\frac{1}{{\arctan\left( y \right)}} + \frac{1}{{\arctan\left( z
				\right)}}} \right] + \frac{{1}}{{\arctan\left( {{\textstyle{{3xyz}
						\over {xy + yz + xz}}}} \right)}},
	\end{align*}
	for all $x,y,z>0$.
\end{example}

\begin{corollary}
	\label{cor32} If $f : I \to \left(0,\infty\right)$ is
	${\rm{H_tH_{1/t}}}$-convex function, then
	\begin{align*}
	&\frac{3}{2}\left[ {\frac{1}{f\left( {\frac{{2xz}}{{x + z}}}
			\right)} + \frac{1}{f\left( {\frac{{2yz}}{{y + z}}} \right)} +
		\frac{1}{f\left( {\frac{{2xy}}{{x + y}}} \right)}}\right]\nonumber
	\\
	&\ge 3\left[ {\frac{1}{{f\left( x \right)}} + \frac{1}{{f\left( y
				\right)}} + \frac{1}{{f\left( z \right)}}} \right] +
	\frac{{1}}{{f\left( {{\textstyle{{3xyz} \over {xy + yz + xz}}}}
			\right)}},
	\end{align*}
	for all $x,y,z\in I$.
\end{corollary}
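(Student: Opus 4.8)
The plan is to read this off as the specialization of the general ${\rm{H_tH_h}}$ theorem that produced \eqref{eq4.3}, taken at the weight $h(t)=t^{-1}=\frac1t$. The first thing I would record is that $h(x)=x^{-1}$ is the power $h(x)=x^{k}$ with $k=-1$, which by the classification recalled just before Theorem \ref{thm1} is subadditive; the two concrete instances needed to run the parenthetical ($\ge$) branch of that proof, namely $h(s)+h(t)\ge h(s+t)$ and $h(1-s)+h(1-t)\ge h(2-s-t)$, are immediate from $\frac1a+\frac1b=\frac{a+b}{ab}\ge\frac1{a+b}$. Since $f$ is taken ${\rm{H_tH_{1/t}}}$-convex, we land in the $\ge$ branch of \eqref{eq4.3}, exactly as Corollary \ref{cor31} sits in the $\le$ branch for the additive weight $h(t)=t$.

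It then remains to evaluate the two weights appearing on the right of \eqref{eq4.3}: $h(1/2)=2$ and $h(3/2)=2/3$. Substituting these into the $\ge$ form of \eqref{eq4.3} gives
\begin{align*}
&\frac{1}{f\left(\frac{2xz}{x+z}\right)}+\frac{1}{f\left(\frac{2yz}{y+z}\right)}+\frac{1}{f\left(\frac{2xy}{x+y}\right)}\\
&\qquad\ge 2\left[\frac{1}{f(x)}+\frac{1}{f(y)}+\frac{1}{f(z)}\right]+\frac{2/3}{f\left(\frac{3xyz}{xy+yz+xz}\right)}.
\end{align*}
Multiplying both sides by $\frac32$ turns the factor $2$ into $3$ and the factor $\frac23$ into $1$, which is precisely the asserted inequality, valid for all $x,y,z\in I$.

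The computation itself is routine; the only point demanding genuine care—and hence the main obstacle—is the bookkeeping of directions. No positive function can be ${\rm{H_tH_{1/t}}}$-convex in the naive sense: setting $\alpha=\beta=x$ in \eqref{eqhHH} with $h(t)=\frac1t$ forces $f(x)\le t(1-t)f(x)$, which is impossible for $f>0$. Thus, in accordance with the convention fixed before Theorem \ref{thm1}, the hypothesis must be read with $f:I\to\mathbb{R}$ and in the redefined (reversed) sense for the class $Q\left(I;{\rm{H_t}},{\rm{H_{1/t}}}\right)$. Once the correct orientation of the termwise estimates and of their reciprocals is fixed, subadditivity of $t^{-1}$ drives the chain in the $\ge$ direction, and the scaling by $\frac32$ closes the argument.
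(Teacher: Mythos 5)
Your first two paragraphs reproduce what is, implicitly, the paper's own proof of Corollary \ref{cor32}: substitute $h(t)=1/t$ (subadditive, the case $k=-1$) into the $(\ge)$ branch of \eqref{eq4.3}, read off $h(1/2)=2$ and $h(3/2)=2/3$, and scale by $\tfrac{3}{2}$. The gap is in your third paragraph, and it is not a matter of bookkeeping. The $(\ge)$ branch of the ${\rm{H_tH_h}}$ theorem is proved from the \emph{naive} convexity inequality \eqref{eqhHH}: that hypothesis yields the termwise \emph{lower} bounds
\begin{align*}
\frac{1}{f\left(\frac{2xz}{x+z}\right)}\ge\frac{h(s)}{f(z)}+\frac{h(1-s)}{f(u)},
\qquad u=\frac{3xyz}{xy+yz+xz},
\end{align*}
which then chain with subadditivity $h(s)+h(t)\ge h(s+t)$ to give the stated $\ge$ conclusion. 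If instead the hypothesis is read in the paper's redefined (reversed) sense, $f\left({\rm{H}}(t;x,y)\right)\ge\frac{t(1-t)f(x)f(y)}{tf(x)+(1-t)f(y)}$, as you insist it must be, then taking reciprocals produces \emph{upper} bounds on each term $1/f\left(\frac{2xz}{x+z}\right)$, and upper bounds on the left-hand side cannot be combined with the $\ge$-directed subadditivity of $1/t$ to produce the claimed lower bound. The two directions clash, so ``fixing the orientation'' does not close the argument; your sentence asserting that it does is precisely the step that fails.

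Nor can any repair succeed, because under your reading the statement is false. Setting $x=y$ in the redefined condition forces $f\ge 0$, and the class contains positive functions, e.g.\ $f(x)=x$, since $\frac{xy}{tx+(1-t)y}\ge\frac{t(1-t)xy}{tx+(1-t)y}$. But for \emph{every} positive $f$ the asserted inequality already fails on the diagonal: at $x=y=z$ all the harmonic means collapse to $x$, and the claim becomes $\frac{9}{2f(x)}\ge\frac{10}{f(x)}$, i.e.\ $\frac{9}{2}\ge 10$. Hence, with the stated hypothesis $f:I\to\left(0,\infty\right)$, the corollary can hold only vacuously --- namely under the naive reading you rejected, for which (as the paper itself shows at the start of Section 2) no positive ${\rm{H_tH_{1/t}}}$-convex function exists. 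In short: your formal substitution coincides with the paper's derivation, but your attempt to make the hypothesis non-vacuous and still run the proof is mathematically impossible, and the paper's own example after this corollary (with $f(x)=-\log x$) suffers from the same defect.
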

\begin{example}
	Let $f\left(x\right)=-\log\left(x\right)$, $x>1$. Then $f$ is
	${\rm{H_tH_{1/t}}}$-convex  on $\left(0,\infty\right)$.
	Applying Corollary  \ref{cor32}, then we get
	\begin{align*}
	&\frac{3}{2}\left[ {\frac{1}{\log\left( {\frac{{2xz}}{{x + z}}}
			\right)} + \frac{1}{\log\left( {\frac{{2yz}}{{y + z}}} \right)} +
		\frac{1}{\log\left( {\frac{{2xy}}{{x + y}}} \right)}}\right]
	\\
	&\le  3\left[ {\frac{1}{{\log\left( x \right)}} +
		\frac{1}{{\log\left( y \right)}} + \frac{1}{{\log\left( z
				\right)}}} \right] + \frac{{1}}{{\log\left( {{\textstyle{{3xyz}
						\over {xy + yz + xz}}}} \right)}},
	\end{align*}
	for all $x,y,z>0$.
\end{example}

\begin{corollary}
	\label{cor33}If $f : I \to \left(0,\infty\right)$ is
	${\rm{H_tH_1}}$-convex function, then
	\begin{align*}
	&\frac{1}{f\left( {\frac{{2xz}}{{x + z}}} \right)} +
	\frac{1}{f\left( {\frac{{2yz}}{{y + z}}} \right)} +
	\frac{1}{f\left( {\frac{{2xy}}{{x + y}}} \right)}
	\\
	&\ge\left[ {\frac{1}{{f\left( x \right)}} + \frac{1}{{f\left( y
				\right)}} + \frac{1}{{f\left( z \right)}}} \right] +
	\frac{1}{f\left( {{\textstyle{{3xyz} \over {xy + yz + xz}}}}
		\right)},
	\end{align*}
	for all $x,y,z\in I$.
\end{corollary}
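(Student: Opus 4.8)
The plan is to obtain the statement as the constant-weight specialization $h\equiv 1$ of the preceding ${\rm{H_tH_h}}$ theorem. The one point that must be handled with care — and the usual source of sign errors — is the \emph{direction}. By definition, ${\rm{H_tH_1}}$-convexity is the instance of \eqref{eqhHH} with $h\equiv 1$, namely an \emph{upper} bound on $f$ evaluated at a harmonic mean; its reciprocal is therefore a \emph{lower} bound, and the constant weight $h\equiv 1$ is \emph{sub}additive (indeed $h(s+t)=1\le 2=h(s)+h(t)$). Consequently it is the ``convex / subadditive / $\ge$'' branch of the theorem that is active here, so the conclusion is the stated lower bound and not its reverse.

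Concretely, I would rerun the computation of the preceding theorem (equivalently, of Theorem \ref{thm8}). Writing $u=\dfrac{3xyz}{xy+yz+xz}$ for the harmonic mean of $x,y,z$, the same reduction produces either $x=y=z$ (in which case both sides coincide and Popoviciu's inequality holds trivially) or two parameters $s,t\in[0,1]$ with $s+t=\tfrac12$ and
\begin{gather*}
\frac{2xz}{x+z}=\frac{uz}{su+(1-s)z},\qquad
\frac{2yz}{y+z}=\frac{uz}{tu+(1-t)z},\\
\frac{2xy}{x+y}=\frac{xy}{\tfrac12 x+\tfrac12 y}.
\end{gather*}
Applying \eqref{eqhHH} to each of these harmonic means (with $\alpha=u,\beta=z$ and parameter $s$ resp.\ $t$ in the first two, and $\alpha=x,\beta=y,t=\tfrac12$ in the last) and inverting yields the three lower bounds
\begin{align*}
\frac{1}{f\!\left(\tfrac{2xz}{x+z}\right)}&\ge\frac{h(s)}{f(z)}+\frac{h(1-s)}{f(u)},\\
\frac{1}{f\!\left(\tfrac{2yz}{y+z}\right)}&\ge\frac{h(t)}{f(z)}+\frac{h(1-t)}{f(u)},\\
\frac{1}{f\!\left(\tfrac{2xy}{x+y}\right)}&\ge h(\tfrac12)\left[\frac{1}{f(x)}+\frac{1}{f(y)}\right].
\end{align*}

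Summing these three and invoking subadditivity in the form $h(s)+h(t)\ge h(s+t)=h(\tfrac12)$ and $h(1-s)+h(1-t)\ge h(2-s-t)=h(\tfrac32)$ gives the conclusion: since $1/f(z)$ and $1/f(u)$ are positive, the coefficients $h(s)+h(t)$ and $h(1-s)+h(1-t)$ may be replaced by the smaller quantities $h(\tfrac12)$ and $h(\tfrac32)$ while retaining a valid ``$\ge$'', so that
\begin{multline*}
\frac{1}{f\!\left(\tfrac{2xz}{x+z}\right)}+\frac{1}{f\!\left(\tfrac{2yz}{y+z}\right)}+\frac{1}{f\!\left(\tfrac{2xy}{x+y}\right)}\\
\ge\ h(\tfrac12)\left[\frac{1}{f(x)}+\frac{1}{f(y)}+\frac{1}{f(z)}\right]+\frac{h(\tfrac32)}{f(u)}.
\end{multline*}
Setting $h\equiv 1$ gives $h(\tfrac12)=h(\tfrac32)=1$, which is precisely the asserted inequality.

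The main — indeed essentially the only — obstacle is this directional bookkeeping: because \eqref{eqhHH} bounds $f$ of a harmonic mean from \emph{above}, reciprocation flips it to a lower bound, and it is \emph{subadditivity} (not superadditivity) that collapses $h(s)+h(t)$ to the clean value $h(\tfrac12)$ while keeping every inequality pointing the right way. Apart from keeping these signs straight, no estimate beyond those already appearing in the preceding theorem and in Theorem \ref{thm8} is needed.
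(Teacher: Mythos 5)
Your proposal is correct and takes essentially the same route as the paper: Corollary \ref{cor33} is precisely the $h\equiv 1$ (subadditive) specialization of the convex branch of the preceding ${\rm{H_tH_h}}$ theorem, and you rerun that proof faithfully --- the reduction from Theorem \ref{thm8} giving $x=y=z$ or $s+t=\tfrac{1}{2}$, the three reciprocal lower bounds from \eqref{eqhHH}, and the collapse of $h(s)+h(t)\ge h(s+t)=h(\tfrac12)$ and $h(1-s)+h(1-t)\ge h(\tfrac32)$, with the direction handled correctly throughout. The one caveat, inherited from the paper rather than introduced by you, is that under the literal reading of \eqref{eqhHH} no positive ${\rm{H_tH_1}}$-convex function exists (cf.\ Remark \ref{cor}), so the statement is nonvacuous only under the paper's sign-reversed convention for this class.
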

\begin{example}
	Let $f\left(x\right)=-\log\left(x\right)$, $x>0$. Then $f$ is
	${\rm{H_tH_1}}$-convex  on $\left(0,\infty\right)$. Applying
	Corollary  \ref{cor33}, then we get
	\begin{align*}
	&\frac{1}{\log\left( {\frac{{2xz}}{{x + z}}} \right)} +
	\frac{1}{\log\left( {\frac{{2yz}}{{y + z}}} \right)} +
	\frac{1}{\log\left( {\frac{{2xy}}{{x + y}}} \right)}
	\\
	&\le\left[ {\frac{1}{{\log\left( x \right)}} +
		\frac{1}{{\log\left( y \right)}} + \frac{1}{{\log\left( z
				\right)}}} \right] + \frac{1}{\log\left( {{\textstyle{{3xyz} \over
					{xy + yz + xz}}}} \right)},
	\end{align*}
	for all $x,y,z>0$.
\end{example}


\begin{thebibliography}{5}
	\setlength{\itemsep}{3pt}
	
	\bibitem{A}M.W. Alomari, Some properties of $h$-${\rm{MN}}$-convexity and Jensen's type
	inequalities, Preprint (2018). Available at: \url{https://arxiv.org/abs/1710.03418}
	
	\bibitem{AVV}G. D. Anderson, M. K. Vamanamurthy and M. Vuorinen, Generalized
	convexity and inequalities, \textit{J. Math. Anal. Appl.},
	{\bf335} (2007), 1294--1308.
	
	
	\bibitem{BNP}M. Bencze, C.P. Niculescu and F. Popovici, Popovicius inequality for functions of several
	variables, {\em J. Math. Anal. Appl.,}  {\bf 365} (2010),
	399--409.
	
	
	\bibitem{B1}W.W. Breckner, Stetigkeitsaussagen f\"{u}r eine Klasse
	verallgemeinerter konvexer funktionen in topologischen linearen
	R\"{a}umen, {\em Publ. Inst. Math.}, {\bf23} (1978), 13--20.
	
	
	\bibitem{B2}W. W.
	Breckner and G. Orban, H\"{o}lder-continuity of certain
	generalized convex functions, {\em Optimization}, {\bf28} (1991),
	201--209.
	
	\bibitem{B3} W. W. Breckner, Rational $s$-convexity, a generalized
	Jensen-convexity. Cluj-Napoca: Cluj University Press, 2011.
	
	
	
	\bibitem{DPP} S.S. Dragomir, J. Pe\v{c}ari\'{c} and L.E. Persson, Some inequalities
	of Hadamard type, {\em Soochow J. Math.}, {\bf21} (1995) 335--341.
	
	\bibitem{F}W. Fechner, Hlawka's functional inequality, {\em Aequat.
		Math.},  {\bf 87} (2014), 71--87.
	
	
	
	
	\bibitem{GL}E.K. Godunova and V.I. Levin, Neravenstva dlja funkcii \v{s}irokogo
	klassa, soder\v{z}a\v{s}\v{c}ego vypuklye, monotonnye i nekotorye
	drugie vidy funkcii,  Vy\v{c}islitel. Mat. i. Mat. Fiz. Mevuzov.
	Sb. Nau¡c. Trudov, MGPI, Moskva, 1985,   138--142.
	
	\bibitem{G}D. Grinberg, Generalizations of Popoviciu's inequality, (2008),
	arXiv:0803.2958v1.
	
	
	\bibitem{HL} H. Hudzik and L. Maligranda, Some remarks on $s$-convex functions,
	{\em Aequationes Math.}, {\bf 48} (1994), 100--111.
	
	\bibitem{J}J. Jensen, Sur les fonctions convexes et les in\'{e}galit\'{e}s
	entre les valeurs moyennes, {\em Acta Math.}, {\bf30} (1906),
	175--193.
	
	\bibitem{MP}D.S. Mitrinovi\'{c} and J. Pe\v{c}ari\'{c}, Note on a class of functions
	of Godunova and Levin, {\em C. R. Math. Rep. Acad. Sci. Can.},
	{\bf12} (1990), 33--36.
	
	\bibitem{MPF}D.S. Mitrinovi\'{c}, J. Pe\v{c}ari\'{c} and A.M. Fink, Classical and New
	Inequalities in Analysis, Kluwer Academic, Dordrecht, 1993.
	
	
	\bibitem{N}C.P. Niculescu, Convexity according to the geometric mean,
	\textit{Math. Inequal. Appl.}, {\bf3} (2) (2000), 155--167.
	
	
	
	\bibitem{NP}C.P. Niculescu, L.E. Persson, Convex Functions and Their
	Applications. A Contemporary Approach, CMS Books Math., vol. 23,
	Springer-Verlag, New York, 2006.
	
	\bibitem{NF}C.P. Niculescu, F. Popoviciu,  A refinement of Popoviciu's
	inequality, {\em Bull. Math. Soc. Sci. Math. Roumanie}, Tome {\bf
		49} (3)  (2006), 285--290.
	
	
	\bibitem{PR} C.E.M. Pearce and A.M. Rubinov, $P$-functions, quasi-convex
	functions and Hadamard-type inequalities, {\em J. Math. Anal.
		Appl.}, {\bf 240} (1999), 92--104.
	
	\bibitem{PS}F. Popovici and C.-I. Spiridon, The Jensen
	inequality for $(M,N)$-convex functions, {\em Annals of the
		University of Craiova, Mathematics and Computer Science Series},
	{\bf 38} (4)  (2011), 63--66.
	
	\bibitem{P}T. Popoviciu, Sur certaines in\'{e}galit\'{e}s qui
	caract\'{e}risent les fonctions convexes, Analele Stiintifice
	Univ.  Al. I. Cuza, Iasi, Sectia Mat., 11 (1965), 155164.
	
	\bibitem{MP}M. Pycia, A direct proof of the $s$-H\"{o}lder continuity of Breckner
	$s$-convex functions, {\em  Aequationes Math.}, {\bf 61} (1-2)
	(2001), 128--130.
	
	\bibitem{RV}A. W. Roberts and D. E. Varberg,  Convex Functions, Academic
	Press, New York, 1973.
	
	\bibitem{SS} D.M. Smiley and M.F. Smiley, The polygonal inequalities, {\em Amer. Math.
		Mon.}, {\bf 71} (7) (1964), 755--760.
	
	\bibitem{TTH}S.-E. Takahasi, Y. Takahashi and A. Honda, A new interpretation of
	Djokovi\'{c}'s inequality, {\em  J. Nonlinear Convex Anal.}, {\bf
		1} (3) (2000), 343--350.
	
	\bibitem{TTMT}S.-E. Takahasi, Y. Takahashi, S. Miyajima and H. Takagi,  Convex
	sets and inequalities, {\em J. Inequal. Appl.}, {\bf 2} (205),
	107--117.
	
	\bibitem{V}S. Varo\v{s}anec,  On $h$-convexity, {\em J. Math. Anal. Appl.},  {\bf326}
	(2007), 303--311.
	
	\bibitem{VS}P.M. Vasi\'{c} and L.R. Stankovi\'{c}, Some inequalities for
	convex functions, {\em Math. Balkanica}, {\bf6} (1976), 281--288.
	
	
	\bibitem{W}R. Whitty, A generalised Hlawka inequality  by D. Smiley \& M.  Smiley,  Theorem
	of The day.
\end{thebibliography}
\end{document}